\documentclass[11pt,reqno,a4paper]{amsart}
\usepackage[normalem]{ulem}
\usepackage[colorlinks=true,linkcolor=blue,citecolor=magenta]{hyperref}
\usepackage{amssymb,epsfig,graphics}
\usepackage{amsmath}
\usepackage{enumerate}
\usepackage{tikz}
\usetikzlibrary{arrows}
\usepackage{bbm}       

\newcommand{\vertiii}[1]{{\left\vert\kern-0.25ex\left\vert\kern-0.25ex\left\vert #1 
    \right\vert\kern-0.25ex\right\vert\kern-0.25ex\right\vert}}

\newtheorem{theorem}{Theorem}[section]
\newtheorem{proposition}[theorem]{Proposition}
\newtheorem{definition}[theorem]{Definition}

\newtheorem{lemma}[theorem]{Lemma}

\newtheorem{remark}[theorem]{Remark}

\theoremstyle{definition}

\def\C{\mathcal{C}}

\def\RR{\mathbb{R}}
\def\TT{\mathbb{T}}

\DeclareMathOperator{\divv}{div}

\newcommand\vf{\varphi}

\def\RR{{\mathbb R}}

\def\1{{{\mathit 1} \!\!\>\!\! I} }

\DeclareMathOperator{\Leb}{Leb}

\numberwithin{equation}{section}
\newcommand\cA{{\mathcal A}}
\newcommand\cB{{\mathcal B}}
\newcommand\cC{{\mathcal C}}
\newcommand\cD{{\mathcal D}}

\newcommand\cH{{\mathcal H}}
\newcommand\cK{{\mathcal K}}

\newcommand\cL{{\mathcal L}}
\newcommand\cO{{\mathcal O}}
\newcommand\cP{{\mathcal P}}
\newcommand\cM{{\mathcal M}}

\newcommand\cR{{\mathcal R}}
\newcommand\cS{{\mathcal S}}
\newcommand\cT{{\mathcal T}}
\newcommand\cU{{\mathcal U}}


\newcommand\bB{{\mathbb B}}
\newcommand\bC{{\mathbb C}}

\newcommand\bN{{\mathbb N}}

\newcommand\bQ{{\mathbb Q}}
\newcommand\bR{{\mathbb R}}
\newcommand\bT{{\mathbb T}}

\newcommand\bV{{\mathbb V}}
\newcommand\bW{{\mathbb W}}
\newcommand\bZ{{\mathbb Z}}

\newcommand\bh{{\mathbbm h}}

\newcommand{\ve}{\varepsilon}
\newcommand\Id{{\mathbbm{1}}}

\newcommand{\Const}{C_{\#}}
\newcommand{\const}{c_{\#}}

 \begin{document}
\title{Mean field coupled dynamical systems: Bifurcations and phase transitions}
\author{Wael Bahsoun}
\address{Department of Mathematical Sciences, Loughborough University,
Loughborough, Leicestershire, LE11 3TU, UK}
\email{W.Bahsoun@lboro.ac.uk}
\author{Carlangelo Liverani}
\address{
Dipartimento di Matematica\\
II Universit\`{a} di Roma (Tor Vergata)\\
Via della Ricerca Scientifica, 00133 Roma, Italy.}
\email{{\tt liverani@mat.uniroma2.it}}
\address{
Department of Mathematics\\
William E. Kirwan Hall, 4176 Campus Dr, University of Maryland\\
College Park, MD 20742, USA}
\email{{\tt cliveran@umd.edu}}
\thanks{ \tiny The research of W. Bahsoun (WB) was supported by EPSRC grant EP/V053493/1. The research of C. Liverani (CL) was supported by the PRIN Grants ``Regular and stochastic behavior in dynamical systems" (PRIN 2017S35EHN), ``Stochastic properties of dynamical systems" (PRIN 2022NTKXCX) and by the MIUR Excellence Department Project Math@TOV awarded to the Department of Mathematics, University of Rome Tor Vergata. CL acknowledges the membership of GNFM/INDAM. CL thanks the Department of Mathematical Sciences of the University of Loughborough for supporting his visit during the completion of this work. WB thanks the Department of Mathematics at the University of Rome Tor Vergata for its hospitality during the course of this work. WB and CL would like to thank the hospitality of MATRIX at the University of Melbourne and the School of Mathematics and Statistics at University of New South Wales where part of this research was conducted. Our participation in the MATRIX programme was supported by the Simons Foundation. C.L. thanks S\'ebastien Gouëzel for a helpful discussion. Finally, we thank Stefano Galatolo, for pointing out a mistake in Lemma 5.1 of a previous version, and the anonymous referee for helpful remarks.
}
\begin{abstract}
We develop a bifurcation theory for infinite dimensional systems satisfying abstract hypotheses that are tailored for applications to mean field coupled chaotic maps. Our abstract theory can be applied to many cases, from globally coupled expanding maps to globally coupled Axiom A diffeomorphisms. To illustrate the range of applicability, we analyze an explicit example consisting of globally coupled Anosov diffeomorphisms. For such an example, we classify all the invariant measures as the coupling strength varies; we show which invariant measures are \emph{physical}, and we prove that the existence of multiple invariant physical measures is a purely infinite dimensional phenomenon, i.e., the model exhibits phase transitions in the sense of statistical mechanics.
\end{abstract}
\date{\today}
\maketitle
\markboth{W. Bahsoun and C. Liverani}{Mean field coupled dynamical systems}
\bibliographystyle{plain}
\tableofcontents
\section{Introduction}
The study of large, possibly infinite-dimensional, coupled systems is of growing importance in many fields, including mathematics, physics and biology, see for example \cite{BA11, Bi21, P20, Y13} and references therein.
A far-reaching class of coupled systems is the \emph{globally coupled} one, also called \emph{mean field} systems, of interacting units in which each unit interacts with all other units of the system. In the field of dynamical systems, such systems provide an interesting alternative to locally coupled map lattices that were investigated by many authors, starting with Kaneko \cite{Ka83}, and were first put in a rigorous setting by Bunimovich and Sinaĭ \cite{BS88}. 

In physics, mean field systems are of paramount importance and have been studied for a long time. They are described by the Vlasov equation (see \cite{SP12}), which is far from being completely understood as it may exhibit unexpected phenomena, e.g. Landau damping \cite{MV}. Mean field systems also appear in the study of nonlinear Markov processes \cite{Ko10}.

Ershov, Potapov \cite{EP95} and Keller \cite{K00} first investigated mean filed systems when the dynamics of each unit is described by an expanding map, and introduced the idea that, in the infinite size limit, a \emph{nonlinear} transfer operator can describe the evolution of states of the underlying system. This plays a role similar to the Vlasov equation for gas models. 

Following the work of Keller \cite{K00}, a considerable amount of work has been done on globally coupled systems where  the dynamics of each unit is described by an expanding map \cite{BKST18, Blank, ST21, G21},\footnote{The work of \cite{G21} contains a general framework when the site dynamics admits exponential
decay of correlations and also applies to certain coupled random systems.} and more recently by an Anosov map \cite{BLS23} or by an intermittent map \cite{BK23} (see also \cite{T22} for a recent review). 
Most of the above results pertain to the case of \emph{weak} coupling strength where the existence of a unique invariant measure with appropriate characteristics (e.g. being \emph{physical}) is proven. However, there are few results in the literature in which it is proven that a globally coupled system may admit several invariant measures (\cite{BKZ,Se21}). Indeed, in \cite{BKZ} an example of a globally coupled expanding map where the infinite system admits exactly three absolutely continuous invariant measures is discussed. Two of the invariant measures of the infinite system are \emph{stable}; i.e., physical in the strong sense of this paper, while the third one is unstable. Moreover, it is shown that in the finite-dimensional case, the system preserves exactly one absolutely continuous invariant measure. However, the example is ad hoc and is explicitly constructed to exhibit multiple invariant measures. A general theory to explore all the invariant measures of given systems is missing.\\
Our goal is to start developing a general theory describing the invariant measures and their characteristics for globally coupled systems when the coupling strength is not necessarily weak, but the system still retains a chaotic behavior. Since such a problem can be reduced to the study of the fixed points of an infinite dimensional dynamical system described by a nonlinear operator acting on an appropriate Banach space, it is natural to wonder if some elementary facts of bifurcation theory can be extended to this context. 

In this work, we first develop a general theory to study bifurcations in infinite dimensional systems satisfying some abstract hypotheses. These hypotheses are not the standard ones under which the bifurcation theory of infinite dimensional systems is studied, they are tailored for applications to globally coupled chaotic maps. We discuss the simplest bifurcation: a saddle-node in a one-parameter family. 
However, we expect that the theory can be generalized to include other types of bifurcations and multi-parameter families of systems, as we believe this to be the beginning of a more general theory.

To show that the theory we develop applies to cases of interest, we study a simple and natural example in which we consider globally coupled Anosov diffeomorphisms. In this example, we show that in the weak coupling regime, the system admits a unique physical measure, while for larger coupling strength the system admits multiple physical measures (see Proposition \ref{prop:exsols} for a precise statement). More importantly, we show that the latter aspect is only an infinite-dimensional phenomenon; i.e., it appears only in the infinite-dimensional limit (thermodynamical limits). Hence, we are in the presence of phase transitions in the sense of statistical mechanics \cite{R69,Si14}, and the bifurcation diagram can be viewed as a phase transition diagram.

It is worth noting that our abstract theory can be applied to other classes of infinite dimensional systems, e.g. globally coupled expanding maps, general Axiom A maps, and partially hyperbolic maps for which one has an adequate understanding of the transfer operator (possibly \cite{CL22}), but we believe that our example suffices to illustrate the range of applicability of our abstract results.

The paper is organized as follows: In section \ref{sec:bifo}, we present an abstract bifurcation theory adapted to our needs, 
then we discuss a general class of globally coupled systems, and we state the main abstract results of our work (Theorem \ref{thm:implicit}, Theorem \ref{thm:implicitg} and Theorem \ref{lem:physical}). In Section \ref{sec:regularity} we study the differentiability properties of the nonlinear transfer operator. Section \ref{sec:biproof} contains the proof of the two first main theorems on the existence of the invariant measures and their dependence on the coupling strength. Section \ref{sec:physical} contains the proof of  Theorem \ref{lem:physical} and discusses the properties of the above invariant measures: in particular, it provides a criterion for identifying physically observable invariant measures.
Section \ref{sec:example} discusses in detail an example showing the presence of a large interval of coupling strength for which there exist many physical invariant measures. It is important to note that, contrary to most of the literature on the subject, we can use our general theory to treat coupling strength well beyond the perturbative regime. Moreover, we show that the existence of many invariant states is a purely infinite dimensional phenomenon (section \ref{sec:phase}); that is, we are in the presence of a phase transition.
Finally, in Appendix \ref{sec:implicit}, we provide an explicit quantitative version of the infinite-dimensional implicit function theorem since we had trouble locating it in the literature. Indeed, such a theorem is often stated in non-quantitative terms, while we need an estimate of the domain in which the theorem applies.
\newpage
\section{The model and an abstract bifurcation theory}\label{sec:bifo}
We are interested in the following class of dynamical systems.\\
Let $T\in\cC^\infty(\TT^d,\TT^d)$.\footnote{ Here, and in the following, we use $\C^\infty$ to simplify notation, in fact $\cC^r$, with e.g. $r\geq 6$, would suffice. Also, the theory can be easily extended to maps on a compact manifold $M$, yet we decided to keep things simple.}   
For $\bar r\in\bN$, $\bar r>12$, $h\in \cC^{\bar r}(\bT^d,\bR)'$,\footnote{ By $\cC^{\bar r}(\bT^d,\bR)'$ we mean the dual space of $\cC^{\bar r}(\bT^d,\bR)$, which is a Banach space when equipped with the norm $\|h\|=\sup_{\|\vf\|_{\cC^{\bar r}}\leq 1} |h(\vf)|$, e.g. see \cite[Defintion 24.3]{treves} where the same space is called ${\cD'}^{\bar r}(\bT^d)$.} $h(1)=1$, and $\nu\in\bR$ let
\begin{equation}\label{eq:coupledmap}
T_{\nu,h}(x)=T(x)+\nu\beta(x)h(\alpha) \quad\text{mod }1,
\end{equation}
where $\alpha\in C^\infty(\TT^d,\bR)$ and $\beta\in C^\infty(\TT^d,\RR^d)$. 
\begin{remark}
In the case when $h$ is a probability measure and $\beta=\bar\beta\circ T$, definition \eqref{eq:coupledmap} reads
\[
\begin{split}
&T_{\nu,h}(x)=\cS_{\nu,h}\circ T\\
&\cS_{\nu,h}=x+\nu\bar \beta(x)\int_{\bT^d}\alpha(y)h (dy).
\end{split}
\]
Or, if $h$ is absolutely continuous with a density that, by abusing notation, we still call $h$, $\cS_{\nu,h}=x+\nu\bar \beta(x)\int_{\bT^d}\alpha(y)h (y) dy$.
This is the same type of maps studied by \cite{EP95, K00}.
\end{remark}

In the case when $h$ is a probability measure we call the system in \eqref{eq:coupledmap} a mean field coupled system, or globally coupled map, of infinitely many interacting particles with coupling strength $\nu$ and state $h$. See \cite{EP95, K00, BLS23} for an explanation of this language, physical motivations, and references.

For simplicity, we restrict ourselves to systems of type \eqref{eq:coupledmap}, but we believe that our approach could be applied to the more general case
\[
T_{\nu,h}(x)=T(x)+\nu\int_{\bT^d}K(x,y)h(dy)\quad\text{mod }1
\]
It should also be possible to study the case in which the mean field influences a parameter of the map via a feedback function in the spirit of \cite{BKZ}.

\begin{remark} From now on, given a distribution $h\in \cC^{\bar r}(\bT^d,\bR)'$, we write, for each $\vf\in\cC^{\bar r}(\bT^d,\bR)$,  $h(\vf)=\int_{\bT^d}\vf h dm$, where $m$ is the Lebesgue measure. This debatable notation, borrowed from physics, has the advantage that if $h\in L^1(\bT^d)$, and one considers the continuous embedding of functions into the space of distributions given by $h\hookrightarrow h m$, then the notation expresses only the abuse of notation in which objects related by the embedding are identified. This is very convenient since many arguments are first carried out for functions and then extended to distributions by density (see the last item in \eqref{eq:embedding}).
\end{remark}
Let $\cL_{S}\in L(\cC^{\bar r}(\bT^d,\bR)',\cC^{\bar r}(\bT^d,\bR)')$ denote the transfer operator associated with a map $S\in\cC^\infty(\bT^d,\bT^d)$:\footnote{Given two Banach spaces $X,Y$, by $L(X,Y)$ we mean the Banach space of the bounded linear operators from $X$ to $Y$ equipped with the norm $\|A\|=\sup_{\|v\|_X\leq 1}\|Ay\|_Y$.} for $h\in \cC^{\bar r}(\bT^d,\bR)', g\in \cC^{\bar r}(\bT^d,\bR)$
\begin{equation}\label{eq:L_def}
\int_{\TT^d}g\cdot \cL_{S}hdm:=\int_{\TT^d}g\circ S\cdot hdm.
\end{equation}
In particular, $\int_{\TT^d}\cL_{S}h dm=\int_{\TT^d}h dm$. \\
\begin{remark} 
Note that, with our definition, the transfer operator acting on measures is nothing else than the push forward, so it can also be written as $(T_S)_* h$. We will occasionally use such a notation.
\end{remark}
If $S(\bT^d)=\bT^d$ and $\det(DS)\neq 0$, where $DS$ is the Jacobian matrix, then $\cL_S$ can be restricted to an element of $L( L^1(\bT^d), L^1(\bT^d))$, since for $h\in L^1(\TT^d)$ and $g\in \cC^{\bar r}(\TT^d)$
\begin{equation}\label{eq:L_def1}
\int_{\TT^d}g\cdot \cL_{S}hdm=\int_{\TT^d}g\circ S\cdot hdm=\int_{\bT^d} g(x)\sum_{S(y)=x} \frac{h(y)}{\det(D_yS)}dm(x).
\end{equation}
Let us define
\begin{equation}\label{eq:nonlinop}
\widetilde\cL_\nu(h)=\cL_{T_{\nu,h}}(h).
\end{equation}
\begin{remark} In the following we will consider the restriction of $\cL_S$, as defined in \eqref{eq:L_def}, and of $\widetilde\cL_\nu$, as defined in \eqref{eq:nonlinop}, to various invariant subspaces of the space of distributions. Since this does not create confusion, we will slightly abuse notations and call all the restrictions with the same name, i.e., $\cL_S$ and $\widetilde\cL_\nu$, respectively.
\end{remark}

\begin{remark}
Note that the restriction of the operator $\widetilde\cL_\nu$ on measures corresponds to the operator $T_\epsilon$ in \cite[Section 2.3]{K00} (we call $\nu$ what in \cite{K00} is called $\epsilon$).
\end{remark}

As an example, calling $\bW_0$ the space of probability measures, $(\bW_0,\widetilde\cL_\nu)$ is an infinite dimensional, nonlinear, dynamical system.
The fixed points of $\widetilde\cL_\nu$ are invariant states of the coupled system \eqref{eq:coupledmap}. Our goal is to study the number and properties of solutions of the equation
\begin{equation}\label{eq:fixed}
\widetilde\cL_\nu(h_\nu)=h_\nu,
\end{equation}
as $\nu$ varies.

\begin{remark} Note that $\widetilde\cL_\nu$ could have a lot of fixed points in which we are not interested, for example $\delta$ functions.
From now on, we will be interested only in solutions of \eqref{eq:fixed} for which, $h_\nu$ is an SRB measure for the map $T_{\nu,h_\nu}$. This is made clear in equation \eqref{eq:implicit}. We will not explicitly mention this limitation anymore.
\end{remark}

To study the solution of \eqref{eq:fixed}, it is convenient, and often necessary, to investigate the action of $\cL_{T_{\nu,h}}$ on appropriate Banach spaces since the invariant measure may not be absolutely continuous with respect to Lebesgue. 

Since our aim is to develop a general theory, we study \eqref{eq:fixed} under some abstract assumptions on the existence of Banach spaces with certain properties. Such hypotheses hold for a large class of maps, e.g., expanding maps and, most notably, Anosov maps. The latter case will be discussed explicitly in section \ref{sec:example} where a concrete example is analyzed in detail.\\
Let $\cT\subset\cC^{\bar r}(\bT^d,\bT^d)$ be a set of surjective maps closed by compositions. Assume that there exists $\nu_*>0$ such that,  for all $\nu\in[0,\nu_*]$ and  $ h\in \bW_0 $, $T_{\nu,h}\in\cT$.\\
Let  $\{\cB_i\}_{i=0}^3$ be Banach spaces such that, for some $\bar r\in\bN$,\footnote{ The inclusion is meant in the sense that there exists a one-to-one embedding.  As already stated, the embedding in $(C^{\bar r}(\TT^d))'$ is the natural extension of the embedding of $\iota:C^{\bar r}(\TT^d)\to ((C^{\bar r}(\TT^d))'$ given by $\iota(h)(\phi)=\int h\phi$, for each $\phi\in C^{\bar r}(\TT^d)$. Also, by $\cB_{i+1}\subset \cB_{i}, i=0,1, 2$, we mean $\|\cdot\|_{\cB_i}\le\|\cdot\|_{\cB_{i+1}}$.} 
\begin{equation}\label{eq:embedding}
\begin{split}
&C^{\bar r}(\TT^d,\bR)'\supset \cB_0\supset\cB_1\supset \cdots\supset  \cB_3 \supset C^{\bar r}(\TT^d,\bR)
\\
&C^{\bar r}(\TT^d,\bR)'\supset \cB_3'\supset\cB_2'\supset \cdots\supset  \cB_0'\supset C^{\bar r}(\TT^d,\bR)\\
& C^{\bar r}(\TT^d,\bR)  \textrm{ is dense in each }\cB_i, \cB_i'.
\end{split}
\end{equation}
For $K_\star>0$ large enough, let\footnote{ By $h\geq 0$ we mean that for all $\vf\in\cC^{\bar r}$, $\vf\geq 0$, $\int h \vf\geq 0$.}
\[
\begin{split}
&\bW=\left\{h\in  \cB_1: \|h\|_{\cB_1}\leq K_\star, h\ge 0, \int h=1\right\}\\
&\bV=\left\{f\in\cB_1:\, \int f=0\right\}.
\end{split}
\]
For all $S\in\cT$,  we can then restrict $\cL_S$ to each $\cB_i$. As already mentioned, to ease notation we will call such restrictions $\cL_S$ as well.  If $\cL_S\in L(\cB_1,\cB_1)$, then equation \eqref{eq:L_def}  implies $\cL_{S}\bV\subset \bV$.\\
We assume that there exist $K,C_*\geq 1$, $\bar c, \hat c>0$, and $\vartheta\in (0,1)$ such that,
\begin{enumerate}[({A}1)]
\item  \label{ass:2}  For each $S\in\cT$, $\cL_{S}\in L(\cB_i,\cB_i)$ for all $i\in\{0,\dots,3\}$.
\item \label{ass:0} For each $k\in\{1,\dots,d\}$, and $i\in\{0,\dots, 2\}$, $\partial_{x_k}\in L(\cB_{i+1},\cB_{i}\}$.
\item \label{ass:1} For any $a\in \cC^{\bar c i+ \hat c}$, $\|a h\|_{\cB_i}\leq C_* \|a\|_{\cC^{\bar c i+\hat c}}\|h\|_{\cB_i}$,  $\left|\int h\right|\leq C_* \|h\|_{\cB_0}$. 
\item\label{ass:3} The unit ball of $\cB_{i+1}$ is relatively compact in $\cB_i$ for $i\in\{0,1,2\}$.
\item  \label{ass:6} For all $n\in\bN$, $\{S_j\}_{j=1}^n\subset \cT$ and $\phi\in\cB_i$, $i\in\{1,\dots,3\}$,
\[
\begin{split}
&\|\cL_{S_n}\cdots\cL_{S_1}  \phi\|_{\cB_0}\leq C_*\|\phi\|_{\cB_0}\\
&\|\cL_{S_n}\cdots\cL_{S_1}  \phi\|_{\cB_i}\leq C_*\vartheta^{n}\|\phi\|_{\cB_i}+K\|\phi\|_{\cB_{i-1}}.\\
\end{split}
\]
\item  \label{ass:4}   The exists $\bar n\in\bN$ such that, for all $\{S_j\}_{j=1}^{n}\subset \cT$ , $n\geq \bar n$, $i\in\{1,\dots,3\}$,
\[
\|\cL_{S_{n}}\cdots\cL_{S_1}|_{\bV\cap\cB_i}\|_{\cB_i}< \vartheta^{n}.
\]
\end{enumerate}

 \begin{remark}\label{rem:gap}
It is worth noting that (A\ref{ass:3}) and (A\ref{ass:6}) imply that the operators $\cL_S$ are quasi-compact. Assumption (A\ref{ass:4}) implies that $\cL_S$ has a spectral gap on $\cB_i$, $i=1,2, 3$, see Lemma \ref{lem:spectra}. Note that the existence of a spectral gap is weaker than Assumption (A\ref{ass:4}) since the compositions can comprise very different operators.
 \end{remark}
 The first implication of the above assumptions is the following.
 \begin{lemma}\label{lem:spectra}
 Under assumptions (A\ref{ass:6}) and (A\ref{ass:4}),
 for each $\{S_j\}_{j=1}^{\bar n}\subset \cT$ we have $\cL_{S_{\bar n}}\cdots\cL_{S_1}\bW\subset \bW$. In addition, for each $S\in \cT$, $\sigma_{\cB_1}(\cL_S)\subset \{1\}\cup\{z\in\bC\;:\;|z|<\vartheta\}$ and $1$ is a simple eigenvalue. Finally, if $h\in\cB_0$ is a probability measure such that $(T_S)_*h=h$, then $h\in\bW \cap \cB_3$.
 \end{lemma}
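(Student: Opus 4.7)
\emph{Invariance of $\bW$.} For $h\in\bW$ and $\cL:=\cL_{S_{\bar n}}\cdots\cL_{S_1}$, the Markov property forces $\cL h\ge 0$ and $\int\cL h=1$, so only the $\cB_1$-bound requires attention. Taking $\bar n$ large enough (possibly enlarging the one in (A\ref{ass:4})) so that $C_*\vartheta^{\bar n}\le\tfrac12$, the $i=1$ case of (A\ref{ass:6}) gives $\|\cL h\|_{\cB_1}\le\tfrac{K_\star}{2}+KB$, where $B$ is a uniform $\cB_0$-bound on positive probability measures in $\cB_1$ (a standard feature of target spaces $\cB_0\subset(\cC^{\bar r})'$); choosing $K_\star\ge 2KB$ then closes the bound.

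\emph{Spectral gap for $\cL_S$.} Combining (A\ref{ass:6}) with the compact embedding in (A\ref{ass:3}) and invoking Hennion's theorem, $\cL_S$ is quasi-compact on $\cB_1$ with essential spectral radius $\le\vartheta$. To produce a fixed point I specialize the first part to $S_1=\dots=S_{\bar n}=S$: the set $\bW$ is $\cB_0$-compact and convex (closed in $\cB_0$ thanks to (A\ref{ass:1}), and relatively compact by (A\ref{ass:3})), and $\cL_S^{\bar n}$ acts on it continuously by (A\ref{ass:2}); Schauder's theorem therefore yields $h_0\in\bW$ with $\cL_S^{\bar n}h_0=h_0$, and the Cesàro average $\bar n^{-1}\sum_{k<\bar n}\cL_S^kh_0$ is fixed by $\cL_S$ itself. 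For any other eigenvalue $\lambda\ne 1$, mass preservation forces the eigenvector into $\bV$, and (A\ref{ass:4}) gives $|\lambda|^{\bar n}<\vartheta^{\bar n}$, i.e.\ $|\lambda|<\vartheta$. The same contraction on $\bV$ rules out a second independent fixed vector and any Jordan block at $1$ (a chain $(\cL_S-I)k=g$ with $g$ fixed forces $g\in\bV\cap\ker(\cL_S-I)=\{0\}$), so $1$ is simple and the remainder of $\sigma_{\cB_1}(\cL_S)$ sits inside $\{|z|<\vartheta\}$.

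\emph{Regularity upgrade.} For $h\in\cB_0$ a probability measure with $\cL_S h=h$, I approximate $h$ in $\cB_0$ by $\phi_m\in\cC^{\bar r}\subset\cB_3$ and pick a diagonal index $m=m(n)\to\infty$ with $\vartheta^n\|\phi_{m(n)}\|_{\cB_1}\to 0$; the iterates $\psi_n:=\cL_S^n\phi_{m(n)}$ remain bounded in $\cB_1$ by (A\ref{ass:6}) while converging in $\cB_0$ to $\cL_S^n h=h$ (using continuity of $\cL_S^n$ on $\cB_0$). Because the closed $\cB_1$-ball is compact, hence closed, in $\cB_0$ by (A\ref{ass:3}), this forces $h\in\cB_1$ with a quantitative norm bound. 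Iterating the same reasoning one level higher --- approximating the now-known $\cB_1$-element $h$ by smooth test functions and applying (A\ref{ass:6}) at $i=2$, then at $i=3$ --- yields $h\in\cB_3$, and inflating $K_\star$ if necessary places $h\in\bW$. The hardest part will be this diagonal bootstrap: the index $m(n)$ must be tuned so the approximating sequence is simultaneously $\cB_0$-close to $h$ and $\cB_1$-bounded, which is precisely what converts distributional information into $\cB_3$-regularity.
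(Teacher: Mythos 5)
Your argument for the invariance of $\bW$ has a gap. You bound $\|\cL h\|_{\cB_1}\le C_*\vartheta^{\bar n}\|h\|_{\cB_1}+K\|h\|_{\cB_0}$ via (A\ref{ass:6}) and then invoke a uniform $\cB_0$-bound $B$ on positive probability measures, calling it ``a standard feature of target spaces $\cB_0\subset(\cC^{\bar r})'$''. But the embedding $\cB_0\subset(\cC^{\bar r})'$ only controls the \emph{weak} norm by the \emph{strong} one, $\|h\|_{(\cC^{\bar r})'}\lesssim\|h\|_{\cB_0}$; nothing in the abstract assumptions bounds $\|h\|_{\cB_0}$ by the mass of $h$. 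The only bound available is $\|h\|_{\cB_0}\le\|h\|_{\cB_1}\le K_\star$, which turns your estimate into $\tfrac12 K_\star+KK_\star$ and cannot be closed since $K\ge 1$. The paper avoids this entirely by writing $h=(h-1)+1$ and applying (A\ref{ass:4}) to $h-1\in\bV$: this gives $\|\cL h\|_{\cB_1}\le\vartheta^{\bar n}\|h\|_{\cB_1}+2K_1$ with $K_1=C_*\|1\|_{\cB_1}+K\|1\|_{\cB_0}$ a fixed constant, which closes for $K_\star$ large, and it works for the $\bar n$ of (A\ref{ass:4}) without enlargement. Since your Schauder construction of the fixed point takes place on $\bW$, the gap propagates into your second part, although the spectral statement itself does not really need Schauder: once $\Id-\cL_S$ is invertible on $\bV$, one can solve $(\Id-\cL_S)v=\cL_S g-g$ for $v\in\bV$ and obtain the eigenvector $g+v$ directly.

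The rest is sound and in places takes a genuinely different route. For the spectrum, your combination of Hennion's theorem, mass conservation (eigenvectors for $\lambda\neq 1$ lie in $\bV$), the strict contraction of (A\ref{ass:4}) on $\bV$, and the exclusion of Jordan blocks at $1$ is equivalent to the paper's terse ``$\bV$ has codimension one'' argument. For the regularity upgrade, your diagonal bootstrap --- approximating $h$ in $\cB_i$ by smooth functions, keeping $\cL_S^n\phi_{m(n)}$ bounded in $\cB_{i+1}$ via the Lasota--Yorke inequality, and passing to the limit using the compact embedding, iterated up to $\cB_3$ --- differs from the paper, which instead identifies $h$ with the already-constructed eigenfunction $h_S\in\cB_3$ by testing $\cL_S^n h_\ve\to h_S$ against smooth observables. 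Your version has the merit of not presupposing the $\cB_3$ spectral theory; the paper's is shorter. Both rely on the (implicit, and also used elsewhere in the paper) fact that bounded $\cB_{i+1}$-balls are closed in $\cB_i$.
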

 \begin{proof}
 Note that, for each $h\in\bW$ and $\{S_j\}_{j=1}^{\bar n}\subset \cT$, assumption (A\ref{ass:6}) implies, for each $n\in\bN$,
 \begin{equation}
 \begin{split}
& \|\cL_{S_{ n}}\cdots\cL_{S_1}1\|_{\cB_1}\leq C_*\|1\|_{\cB_1}+K\|1\|_{\cB_{0}}=:K_1\\
&\|\cL_{S_{ n}}\cdots\cL_{S_1}h\|_{\cB_1}\leq C_*\|h\|_{\cB_1}+K\|h\|_{\cB_{0}}\leq (C_*+K)K_\star.
\end{split}
 \end{equation}
 Then Assumption (A\ref{ass:4}) implies 
 \begin{equation}\label{eq:K-inv}
 \begin{split}
\|\cL_{S_{\bar n}}\cdots\cL_{S_1}h\|_{\cB_1}&\leq \|\cL_{S_{\bar n}}\cdots\cL_{S_1}(h-1)\|_{\cB_1}+K_1\\ 
&\leq \vartheta^{\bar n}\|h-1\|_{\cB_1}+K_1\leq \vartheta^{\bar n}\|h\|_{\cB_1}+2K_1\\
&\leq K_\star\vartheta+2K_1\leq K_\star
\end{split}
\end{equation}
provided $K_\star$ is large enough. Hence, $\cL_{S_{\bar n}}\cdots\cL_{S_1}\bW\subset \bW$.\\
To prove the second statement, note that $\|\cL_{S}^{\bar n}|_{\bV}\|_{\cB_1}< \vartheta^{\bar n}$, hence $\sigma(\cL_S|_{\bV})\subset \{z\in\bC\;:\;|z|<\vartheta\}$. The lemma follows since $\bV$ has codimension one.
To conclude, the lemma assumes that $h$ is a probability measure and $(T_S)_*h=h$. Then, for all $\ve>0$, by density there exist $h_\ve\in\cC^\infty$, $\int h_\ve=1$, such that, for all $\vf\in\cC^0$,
\[
\left|\int \vf dh-\int \vf h_\ve dx\right|\leq \ve\|\vf\|_\infty. 
\]
It follows that 
\[
\left|\int \vf\, d(T_S)_*^nh-\int \vf \cL_S^n h_\ve \right|\leq \ve\|\vf\|_\infty.
\]

Since $\cL_S\in L(\cB_3,\cB_3)$ has 1 as a simple eigenvalue, whose eigenfunction we denote by $h_S\in\cB_3$, and a spectral gap, for each $\vf\in\cC^\infty$, $\|\vf\|_\infty\leq 1$, we have
\[
\int \vf d h=\lim_{n\to\infty}\int \vf d(T_S)_*^nh=\lim_{n\to\infty}\int \vf \cL_S^nh_\ve+\cO(\ve)=\int \vf h_S+\cO(\ve).
\]

Since $\ve$ is arbitrary by \eqref{eq:embedding} it follows $h=h_S$.\footnote{Here, we are implicitly identifying measures and functions in a canonical way, as previously explained.} In addition, arguing as in \eqref{eq:K-inv}, implies 
\[
\|h_S\|_{\cB_1}=\lim_{k\to\infty}\|\cL^{k\bar n}_Sh_S\|_{\cB_1}\leq \lim_{k\to\infty}\vartheta^{k\bar n}\|h_S\|_{\cB_1}+2K_1\leq K_\star,
\]
so $h\in\bW$.
\end{proof}
Another simple consequence of the above assumptions is the existence of periodic orbits of the nonlinear transfer operator $\widetilde\cL_\nu$; i.e., $\widetilde\cL_\nu^{\bar n} h_0=h_0$ for some $h_0\in\cB_1$ and some $\bar n\in\mathbb N$. Note that this implies that the measure $h_0$ is periodic with respect to the coupled dynamics.

\begin{theorem}\label{thm:period}
Under our assumptions, for each $\nu\in[0,\nu_*]$, $\widetilde\cL_\nu$ has at least one periodic orbit in $\cB_1$.
\end{theorem}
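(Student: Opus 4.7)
The plan is to apply the Schauder--Tychonoff fixed point theorem to $F:=\widetilde\cL_\nu^{\bar n}$ acting on $\bW$, equipped with the topology inherited from $\cB_0$. I will need three ingredients: $F(\bW)\subset\bW$; $\bW$ is a compact convex subset of $\cB_0$; and $F$ is continuous in that topology.

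\emph{Invariance.} If $h\in\bW$ and $h_j:=\widetilde\cL_\nu^{j}(h)$ for $0\le j<\bar n$, then each $T_{\nu,h_j}$ belongs to $\cT$ by the assumption on the closedness of $\cT$, so
\[
\widetilde\cL_\nu^{\bar n}(h)=\cL_{T_{\nu,h_{\bar n-1}}}\cdots\cL_{T_{\nu,h_0}}(h),
\]
and Lemma~\ref{lem:spectra} directly gives $\widetilde\cL_\nu^{\bar n}(\bW)\subset\bW$. The set $\bW$ is clearly convex; it is relatively compact in $\cB_0$ by (A\ref{ass:3}), and closed in $\cB_0$ since positivity is preserved under weak-$*$ limits in $\cC^{\bar r}(\bT^d,\bR)'$, the mass functional $h\mapsto\int h$ is $\cB_0$-continuous by (A\ref{ass:1}), and $\|\cdot\|_{\cB_1}$ is lower semicontinuous with respect to $\cB_0$-convergence (a standard feature of the Banach-space pairs arising in the applications).

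\emph{Continuity.} For $h_n\to h$ in $\cB_0$ with $h_n,h\in\bW$, the inclusion $\alpha\in\cC^{\bar r}\subset\cB_0'$ (from \eqref{eq:embedding}) yields $h_n(\alpha)\to h(\alpha)$, so $T_{\nu,h_n}\to T_{\nu,h}$ in $\cC^\infty$. Splitting
\[
\widetilde\cL_\nu h_n-\widetilde\cL_\nu h=\cL_{T_{\nu,h_n}}(h_n-h)+(\cL_{T_{\nu,h_n}}-\cL_{T_{\nu,h}})h,
\]
the first term tends to $0$ in $\cB_0$ by the uniform bound in (A\ref{ass:6}); the second is handled by continuity of $S\mapsto\cL_S h$ from $\cC^\infty$ into $\cB_0$, which can be verified first for smooth $h$ via the change-of-variables formula \eqref{eq:L_def1} and then extended to all of $\cB_1$ by density together with the uniform (A\ref{ass:6}) bound. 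Iterating $\bar n$ times gives continuity of $F$, and Schauder--Tychonoff produces $h_*\in\bW$ with $F(h_*)=h_*$, which is a periodic orbit of $\widetilde\cL_\nu$ of period dividing $\bar n$.

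The main obstacle I anticipate is the continuity step: because $\cL_S$ involves composition with $S$, controlling $(\cL_{T_{\nu,h_n}}-\cL_{T_{\nu,h}})h$ in the weak norm $\cB_0$ when $h$ may only lie in $\cB_1$ is the only place where the concrete interplay between the Banach-space scale $\{\cB_i\}$ and composition with $\cC^{\bar r}$-maps really enters; everything else is soft.
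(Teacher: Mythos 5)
Your proposal follows the same strategy as the paper: apply Schauder to $\widetilde\cL_\nu^{\bar n}$ on $\bW$ in the $\cB_0$ topology, using Lemma \ref{lem:spectra} for invariance, (A\ref{ass:3}) for compactness, and a splitting of $\widetilde\cL_\nu h_n-\widetilde\cL_\nu h$ for continuity. The continuity step you flag as the main obstacle is exactly where the paper invokes the quantitative estimate of Lemma \ref{lem:regularity} (second inequality), which gives $\|(\cL_{T_{\nu,h}}-\cL_{T_{\nu,h_1}})h\|_{\cB_0}\le C\|h\|_{\cB_1}\|h-h_1\|_{\cB_0}$ directly; your qualitative density argument from smooth $h$ is a workable substitute.

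There is, however, one spot where your argument leans on a property that is not among the stated abstract hypotheses: to make $\bW$ genuinely compact (not just relatively compact) in $\cB_0$ you invoke lower semicontinuity of $\|\cdot\|_{\cB_1}$ under $\cB_0$-convergence, justified only as ``a standard feature of the Banach-space pairs arising in the applications.'' That is true for the anisotropic spaces of Section \ref{sec:example}, but the theorem is stated at the level of (A\ref{ass:2})--(A\ref{ass:4}), where no such property is assumed; a priori the Schauder fixed point lives only in the $\cB_0$-closure of $\bW$, which need not sit inside $\cB_1$. The paper closes this without extra hypotheses by a final step you omit: writing $h_*=\cL_{T_{\bar n}}h_*$ with $T_{\bar n}=T_{\nu,h_{*,\bar n-1}}\circ\cdots\circ T_{\nu,h_{*,0}}\in\cT$ a \emph{fixed} composition, and then using (A\ref{ass:4}) together with the last assertion of Lemma \ref{lem:spectra} (any invariant probability in $\cB_0$ for a map whose transfer operator has a spectral gap must equal the unique $\cB_1$ eigenfunction) to conclude $h_*\in\bW\cap\cB_3$. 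You should either add that step or promote the lower-semicontinuity property to an explicit assumption; as written, the claim that the fixed point lies in $\cB_1$ is not fully justified within the abstract framework.
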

\begin{proof}
By Lemma \ref{lem:spectra} we have $\widetilde \cL_\nu^{\bar n}(\bW)\subset \bW$.
 Next, note that if $h,h_1\in\bW$ are close enough, Assumptions (A\ref{ass:1}), (A\ref{ass:6}), and  Lemma \ref{lem:regularity} imply
 \[
 \begin{split}
 \|\widetilde\cL_{\nu}(h)-\widetilde \cL_{\nu}(h_1)\|_{\cB_0}
& \leq  \|\cL_{T_{\nu,h}}(h)- \cL_{T_{\nu,h_1}}(h)\|_{\cB_0}+\|\cL_{T_{\nu,h_1}}(h-h_1)\|_{\cB_0}\\
&\leq (C_*+\bar K\|h\|_{\cB_1})\|h-h_1\|_{\cB_0}\leq (C_*+\bar K K_\star)\|h-h_1\|_{\cB_0},
 \end{split}
 \]
 for some constant $\bar K>0$.
 That is $\widetilde \cL_\nu$ is continuous on $\bW$ in the $\cB_0$ topology.
In addition, by Assumption (A\ref{ass:3}), $\bW$ is compact in $\cB_0$, and clearly also convex. Then, by the Schauder fixed point Theorem, there exists at least an $h_*\in\bW$ such that $\widetilde \cL_\nu^{\bar n} h_*=h_*$. 

Let $h_{*,k}:=\widetilde \cL_\nu^{k} h_*$ and $T_k=T_{\nu,h_{*,k-1}}\circ \dots\circ T_{\nu,h_{*,0}}$. Then $h_{*}=\cL_{T_{\bar n}}h_*$. Since $T_k\in\cT$ by assumption, Assumption \eqref {ass:4} implies that there exists a unique $h_0\in\cB_1$ such that $\cL_{T_{\bar n}}h_0=h_0$. Hence, it must be $h_*=h_0\in\cB_1$. The wanted periodic orbit is then $\{\widetilde \cL_\nu^k h_0\}_{k=0}^{\bar n-1}$.
\end{proof}
\begin{remark}\label{rem:fixed}
Notice that if $\bar n=1$, then the argument in the proof of Theorem \ref{thm:period} implies the existence of a fixed point for $\widetilde\cL_\nu$ in $\cB_1$.
\end{remark}
Next, we want to explore not periodic orbits, but rather fixed points of $\widetilde \cL_{\nu}$. In this direction, there are already many results on the existence of a unique fixed point of $\widetilde \cL_{\nu}$ for small $\nu$; see \cite{G21} for general results and references. \\
On the contrary, here we are interested in the non perturbative regime, i.e. large $\nu$.

Lemma \ref{lem:spectra} implies that $\cL_{T_{\nu,h}}$ has a unique invariant probability measure that belongs to $\bW$ (the SRB or physical measure). Let us call it $H(\nu,h)$. We will prove further properties of $H(\nu,h)$ in Lemma \ref{lem:H-prop} below. 

Let $\Omega:=[0,\nu_*]\times \bW$ equipped with the topology of $\bR\times\cB_0$. Note that $\Omega$ is then a relatively compact set. Our basic idea is to study solutions of the following implicit equation 
$$F=0,$$ 
where $F:\Omega\to\bV$ is defined by
\begin{equation}\label{eq:implicit}
F(\nu,h)=h-H(\nu,h).
\end{equation}
Note that $F(\nu,h)=0$ implies 
\[
h=H(\nu,h)=\cL_{T_{\nu,h}}H(\nu,h)=\cL_{T_{\nu,h}}h=\widetilde \cL_\nu(h)
\]
that is $h$ is a fixed point of $\widetilde \cL_\nu$. Moreover, if $\widetilde \cL_\nu h=h$, assumption (A\ref{ass:4}) implies $h=H(\nu,h)$.
We will see in Lemma \ref{lem:der} that, under our assumptions, $F\in\cC^2(\Omega,\cB_1)$. For convenience, we define  
\begin{equation}\label{eq:Theta}
\Theta(\nu, h)=-(\Id-\cL_{T_{\nu,h}})^{-1} \divv{\cL_{T_{\nu,h}}} \beta H(\nu,h),
\end{equation}
which will appear in the expression of the Fr\'echet derivative, $D_hF$, of $F$.
We are now ready to state our main results. First, we have a local result.
\begin{theorem}[Local]\label{thm:implicit}
Suppose that there exists $\nu_0\in [0,\nu_*)$ and $h_0\in \bW$ such that
\begin{equation}\label{eq:solution}
 F(\nu_0, h_0)=0,
\end{equation} 
and either $D_{h_0} F$ is invertible or
\begin{align}
& \operatorname{Range}(D_{h_0} F)(\nu_0,\cdot)\oplus D_\nu F(\nu_0,h_0)=\bV; \tag{H1}\label{eq:range}\\
& \int\alpha D^2_hF|_{\nu_0,h_0}(\Theta(\nu_0,h_0),\Theta(\nu_0,h_0))\neq 0;\tag{H2}\label{eq:seconder}\\
&\int \alpha h_0\neq 0. \tag{H3}\label{eq:nontrivial}
 \end{align}
Then there exists $\delta>0$ and a differentiable (in the $\cB_1$ topology) map $(G,\bh)=(-\delta,\delta)\to [0,\nu_*]\times \bW$, $(G(0),\bh (0))=(\nu_0,h_0)$, such that all the solutions of the equation $F(\nu, h)=0$ in $\{(\nu,h)\in\bR\times \bW\;:\;  |\nu-\nu_0|+\|h-h_0\|_{ \cB_1}\leq \delta\}$ belong to the set  $\bh(-\delta,\delta)$. In addition, if $D_hF(\nu_0,h_0)$ is invertible, then for each $\nu$ in a neighborhood of $\nu_0$, $F(\nu,h)=0$ has a unique solution. While if $\nu_0>0$ and $D_hF(\nu_0,h_0)$ is not invertible, then there exists $\nu\in (0,\nu_*)$ for which the equation $F(\nu,h)=0$ has a least two solutions.
\end{theorem}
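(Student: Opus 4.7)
The argument splits along the two alternatives. If $D_{h_0}F$ is invertible, the plan is to apply the quantitative implicit function theorem of Appendix~\ref{sec:implicit} directly; the regularity $F\in\cC^2(\Omega,\cB_1)$ needed for this is exactly Lemma~\ref{lem:der}, and one reads off both the smooth curve $\nu\mapsto\bh(\nu)$ and local uniqueness in a $\cB_1$-ball. The genuine content lies in the degenerate case, where I would run a Lyapunov--Schmidt reduction. The first task is to identify the kernel and range of $L\eqdef D_{h_0}F(\nu_0,h_0)$. Differentiating $H(\nu,h)=\cL_{T_{\nu,h}}H(\nu,h)$ in $h$ and using that $T_{\nu,h}$ depends on $h$ only through the scalar $\int\alpha h$, one obtains, as Lemma~\ref{lem:der} should confirm,
\[
L v \;=\; v - \nu_0\Bigl(\int\alpha v\Bigr)\Theta_0, \qquad \Theta_0\eqdef \Theta(\nu_0,h_0).
\]
Thus $L$ is a rank-one compact perturbation of the identity, hence Fredholm of index zero; non-invertibility then forces $\ker L=\operatorname{span}\{\Theta_0\}$ and $\operatorname{Range}(L)=\{u\in\bV:\int\alpha u=0\}$. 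A natural complement of $\operatorname{Range}(L)$ is $\operatorname{span}\{\Theta_0\}$, and \eqref{eq:range} is precisely the statement that $D_\nu F(\nu_0,h_0)$ is transverse to the range.

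Parametrize $h=h_0+s\Theta_0+w$, with $w$ in a fixed topological complement of $\operatorname{span}\{\Theta_0\}$ in $\cB_1$, and split $F=PF+(\Id-P)F$ along $\operatorname{Range}(L)\oplus\operatorname{span}\{\Theta_0\}$. The $w$-derivative of $PF$ at $(s,\nu,w)=(0,\nu_0,0)$ equals $PL$, an isomorphism from the chosen complement onto $\operatorname{Range}(L)$, so Appendix~\ref{sec:implicit} yields $w=w(s,\nu)$ of class $\cC^2$ with $w(0,\nu_0)=0$ and (crucially) $\partial_s w(0,\nu_0)=0$. Reinserting gives the scalar equation
\[
f(s,\nu)\eqdef \int\alpha\; F\bigl(\nu,\,h_0+s\Theta_0+w(s,\nu)\bigr)=0.
\]
Its derivatives at $(0,\nu_0)$ encode the three hypotheses: \eqref{eq:range} gives $\partial_\nu f(0,\nu_0)\neq 0$; automatically $\partial_s f(0,\nu_0)=\int\alpha\,L\Theta_0=0$; and because $\partial_s w(0,\nu_0)=0$, hypothesis \eqref{eq:seconder} delivers $\partial^2_{ss}f(0,\nu_0)=\int\alpha\,D^2_hF(\nu_0,h_0)(\Theta_0,\Theta_0)\neq 0$.

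The classical one-variable implicit function theorem applied to $f$ now produces a $\cC^2$ map $G:(-\delta,\delta)\to[0,\nu_*]$ with $G(0)=\nu_0$, $G'(0)=0$ and $G''(0)\neq 0$; setting $\bh(s)=h_0+s\Theta_0+w(s,G(s))$ completes the parametrization, and local exhaustiveness of this branch follows from the uniqueness parts of the two implicit function theorems applied in succession. Since $G$ has a strict local extremum at $0$ and, in this subcase, $\nu_0\in(0,\nu_*)$ by assumption, values of $\nu$ on the appropriate side of $\nu_0$ remain in $(0,\nu_*)$ and have two distinct $G$-preimages, producing two fixed points of $\widetilde\cL_\nu$. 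Hypothesis \eqref{eq:nontrivial} enters to rule out a spurious ``vertical'' branch: if $\int\alpha h_0=0$ then $T_{\nu,h_0}\equiv T$ and $(\nu,h_0)$ solves $F=0$ for \emph{every} $\nu$, which would contradict the claim that all nearby solutions lie on the single curve $\bh$.

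The main obstacle I foresee is not the (standard) saddle-node algebra above but the analytic scaffolding that supports it: establishing $F\in\cC^2(\Omega,\cB_1)$ with Fr\'echet derivatives in precisely the form used here --- which requires using the scale $\cB_3\subset\cB_2\subset\cB_1\subset\cB_0$ together with Assumptions~(A\ref{ass:0})--(A\ref{ass:6}) to control compositions of the transfer operator, the divergence, and the resolvent $(\Id-\cL_{T_{\nu,h}})^{-1}$ on $\bV$ appearing in $\Theta$ --- and the availability of a \emph{quantitative} implicit function theorem so that the neighborhoods produced by the two successive IFT applications can be aligned on a common $\delta$. This is precisely why Appendix~\ref{sec:implicit} is indispensable.
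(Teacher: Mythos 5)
Your treatment of the invertible case is the same as the paper's (quantitative IFT of Appendix \ref{sec:implicit} fed by Lemma \ref{lem:der}), but in the degenerate case you take a genuinely different route. The paper does not perform a Lyapunov--Schmidt reduction: it introduces the change of variables $\Lambda(\tau,\zeta)=(Z(\zeta),\zeta-\tau\nu_0\Theta_0)$ with $Z(\phi)=\int\alpha\phi$, which swaps the bifurcation parameter with the kernel direction; Lemma \ref{lem:invertible2} then shows $D_\zeta\widetilde F$ \emph{is} invertible at the fold (precisely because $\int\alpha h_0\neq0$), so a single application of Theorem \ref{thm:implicit-func} produces $\zeta(\tau)$ and hence the curve $(G,\bh)=\Lambda(\tau,\zeta(\tau))$, after which $G'(\tau_0)=0$ and $G''(\tau_0)\neq0$ are extracted exactly as in your scalar computation (Lemma \ref{lem:turn}). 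Your two-step splitting $F=PF+(\Id-P)F$ along $\operatorname{Range}(L)\oplus\operatorname{span}\{\Theta_0\}$ is correct --- your identification of $\ker L$ and $\operatorname{Range}(L)=\{u\in\bV:\int\alpha u=0\}$, the vanishing of $\partial_s w(0,\nu_0)$, and the resulting formulas $\partial_\nu f\neq0$, $\partial_sf=0$, $\partial^2_{ss}f=\int\alpha D_h^2F(\Theta_0,\Theta_0)$ all check out --- and it makes the role of each hypothesis (H1)--(H3) more transparent; the paper's coordinate change buys a single IFT application with one explicit $\delta$, which is why the quantitative Appendix is stated the way it is. Two small points to tighten: the complement in which $w$ lives should be taken inside $\bV$ (not all of $\cB_1$) so that $h_0+s\Theta_0+w$ remains normalized; and since Lemma \ref{lem:der} gives $F\in\cC^2$ only into $\cB_0$ (it is merely $\cC^1$ into $\cB_1$), the second $s$-derivative of $w$ and of $f$ must be computed in the $\cB_0$ topology, with $\int\alpha(\cdot)$ continuous there by (A\ref{ass:1}) --- the same bookkeeping the paper performs when differentiating \eqref{eq:derone} a second time.
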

The proof of the above theorem is contained in Section \ref{sec:inv_meas}.
\begin{remark}
Note that Theorem \ref{thm:implicit} implies linear response; i.e., differentiability in the $\cB_1$ topology, with respect to the parameter $\nu$, of the invariant measures.
\end{remark}
Next, we establish a global result, which characterizes all the fixed points of $\widetilde\cL_\nu$ (the invariant measures $h_\nu$ solutions of \eqref{eq:fixed}).
\begin{theorem}[Global]\label{thm:implicitg}
Suppose that for each $\nu\in [0,\nu_*]$, $h\in \bW$ such that
\begin{equation}\label{eq:solutiong}
 F(\nu, h)=0,
\end{equation} 
either $D_{h} F$ is invertible or
\begin{align*}
& \operatorname{Range}(D_{h} F)(\nu,\cdot)\oplus D_\nu F(\nu,h)=\bV;\\
& \int\alpha D^2_hF|_{\nu,h}(\Theta(\nu,h),\Theta(\nu,h))\neq 0;\\
&\int \alpha h\neq 0.
 \end{align*}
Then there exist finitely many intervals $I_k\subset \bR$ and differentiable curves $(G_k,\bh_k):I_k\to [0,\nu^*]\times\cB_1$ such that all the $\cB_1$ fixed points of $\widetilde\cL_\nu$ belong to $\cup_k \bh_k(I_k)\subset \cB_3$. More precisely $\widetilde\cL_{G_k(y)}\bh_k(y)=\bh_k(y)$ for all $y\in I_k$. Moreover, $\sharp\{G_k^{-1}(\nu)\}<\infty$ for all $\nu\in [0,\nu_*]$.
Finally, if for some $h\in\bW$, $\nu\in (0,\nu_*)$, $F(\nu,h)=0$ and $D_{h} F$ is not invertible, then there exists $\nu_1\in (0,\nu_*)$ such that 
$\sharp\{\bigcup_{k}G_k^{-1}(\nu_1)\}\geq 3$ (that is, there are at least three invariant measures for $\widetilde \cL_{\nu_1}$).
\end{theorem}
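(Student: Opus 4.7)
The strategy is to show that the zero set $\Sigma := F^{-1}(0)\subset \Omega$ is a compact $\cC^1$ one-manifold, enumerate its finitely many connected components as the curves $(G_k,\bh_k)$, and then combine the uniqueness of the fixed point at $\nu=0$ with a Morse-type count to produce three coexisting invariant measures in the presence of a non-invertible point.

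\emph{Compactness, local cover, and finite fibres.} By assumption (A\ref{ass:3}), $\bW$ is compact in $\cB_0$, so $\Omega=[0,\nu_*]\times\bW$ is compact in $\bR\times\cB_0$; continuity of $F$ from Lemma \ref{lem:der} then gives compactness of $\Sigma$. A bootstrap of $h=\cL_{T_{\nu,h}}h$ using (A\ref{ass:6}) bounds $\Sigma$ uniformly in $\cB_3$, so the topologies of $\cB_0$, $\cB_1$, $\cB_2$ all agree on $\Sigma$, and the $\cB_1$-open cover of $\Sigma$ produced by Theorem \ref{thm:implicit} admits a finite subcover. With $K_\star$ chosen large enough in Lemma \ref{lem:spectra}, fixed points lie strictly inside $\bW$, so $\Sigma$ avoids $[0,\nu_*]\times\partial\bW$, and the glued local pieces form a compact $\cC^1$ one-manifold whose boundary sits in $\{0,\nu_*\}\times\bW$. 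Its connected components are thus finitely many arcs with endpoints at $\nu\in\{0,\nu_*\}$ or interior circles, and each is parameterised by $(G_k,\bh_k)\colon I_k\to[0,\nu_*]\times\cB_1$ with $\bh_k(I_k)\subset\cB_3$. Each fibre $G_k^{-1}(\nu)$ is closed in the compact interval $I_k$; an accumulation point would contradict the local description of $\Sigma$ from Theorem \ref{thm:implicit}, in which $\nu\mapsto\#\{F=0\}$ is monotone under invertibility of $D_hF$ and otherwise has, by (H2), a single non-degenerate extremum.

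\emph{Three solutions from a saddle-node.} At $\nu=0$, $T_{0,h}=T$ independently of $h$, so $F(0,h)=h-h_{\mathrm{SRB}}$ vanishes only at the unique SRB measure $h_{\mathrm{SRB}}$ of $T$ and $D_hF|_{(0,h_{\mathrm{SRB}})}=\Id$. Consequently exactly one component $\bh_0$ meets $\{\nu=0\}$, and this uniqueness forces its other endpoint to sit on $\{\nu=\nu_*\}$; the intermediate value theorem then makes $G_0$ surject onto $[0,\nu_*]$, so $\bh_0$ contributes at least one fixed point at every parameter. Let $(\nu_0,h_0)$ be the given saddle-node and $C$ its connected component. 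If $C=\bh_0(I_0)$, then $G_0$ has a non-degenerate interior critical value $\nu_0\in(0,\nu_*)$ by (H2); since $G_0$ runs from $0$ to $\nu_*$ on $I_0$ and passes through $\nu_0$, a single interior extremum is inconsistent with these boundary values, so $G_0$ must have at least two critical points of opposite type, and for any $\nu_1$ strictly between their critical values $G_0^{-1}(\nu_1)$ already has $\geq 3$ elements. If $C\neq\bh_0(I_0)$, uniqueness at $\nu=0$ prevents $C$ from meeting $\{\nu=0\}$, so $C$ is either an arc with both endpoints at $\nu=\nu_*$ or an interior circle; in both cases $G_C$ attains an interior minimum strictly below $\nu_*$, hence $G_C^{-1}(\nu_1)$ has at least two elements for $\nu_1$ slightly above that minimum, which combined with $\bh_0$'s contribution yields at least three fixed points of $\widetilde\cL_{\nu_1}$.

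The hardest step is the last paragraph: the topological case analysis rests on the interior confinement of fixed points (controlled by the choice of $K_\star$), the compact one-manifold structure furnished by Theorem \ref{thm:implicit} together with (H2), and above all on the uniqueness of the fixed point at $\nu=0$, which pins down the topology of the unique component meeting the left boundary.
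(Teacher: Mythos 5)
Your argument is correct, and it rests on the same basic ingredients as the paper's proof (Theorem \ref{thm:implicit} for the local structure of the solution set, uniqueness of the fixed point at $\nu=0$, compactness in $\cB_1$ via the $\cB_3$ bootstrap, and non-degeneracy of turning points from (H2)), but it organizes them differently. The paper proceeds by continuation: it starts the branch at the unique fixed point at $\nu=0$, extends it with Proposition \ref{prop:endinv} until invertibility of $D_hF$ fails, passes turning points with Lemma \ref{lem:turn}, proves finiteness of fibres by a covering argument, and obtains the three solutions by following the curve (``it must turn again, and since it has finitely many turning points it must come back''). You instead classify the whole zero set $F^{-1}(0)$ as a compact $\cC^1$ one-manifold with boundary contained in $\{0,\nu_*\}\times\bW$ and then run a Morse/intermediate-value count on its finitely many components. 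The global classification buys you two things the paper's continuation handles only implicitly: it deals cleanly with closed-loop components (the paper's phrase ``cannot end at zero hence must end, in both directions, at $\nu_*$'' tacitly excludes circles, which you treat explicitly), and it replaces the extension Proposition \ref{prop:endinv} by the structure theorem for compact one-manifolds. Two points you state loosely but which are repairable exactly along the paper's lines: (i) the finite-fibre claim is best argued from the fact that every critical point of $G_k$ is non-degenerate by Lemma \ref{lem:turn} and hence isolated, so that $I_k$ splits into finitely many intervals of strict monotonicity of $G_k$ — your parenthetical about $\nu\mapsto\#\{F=0\}$ being ``monotone'' under invertibility is not the right formulation (it is locally constant, equal to one); and (ii) gluing the graph-over-$\nu$ charts of Lemma \ref{lem:noturn} with the graph-over-$\tau$ charts of Lemma \ref{lem:turn} into a single $\cC^1$ parametrization of each component uses that $\bh'\neq 0$ at turning points, which follows from \eqref{eq:h-der} because $\nu_0>0$ and $\Theta(\nu_0,h_0)\neq 0$ there.
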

The proof of the above Theorem can be found in section \ref{sec:invariant_global}.\\
According to Theorem \ref{thm:implicitg} the solutions $h_\nu$ of equations \eqref{eq:fixed}, for different $\nu$, can be all written as $h_{G_k(\tau)}=\bh_k(\tau)$ for certain maps $\{(G_k,\bh_k)\}$.
Figures \ref{NewBirfucation} and \ref{2BBirfucation} below provide two pictorial illustrations for possible bifurcation diagrams compatible with Theorem \ref{thm:implicitg} when only one (figure 1) or two (figure 2) such maps are present.
\begin{figure}[ht]
   \centering
   \includegraphics [scale=0.44]{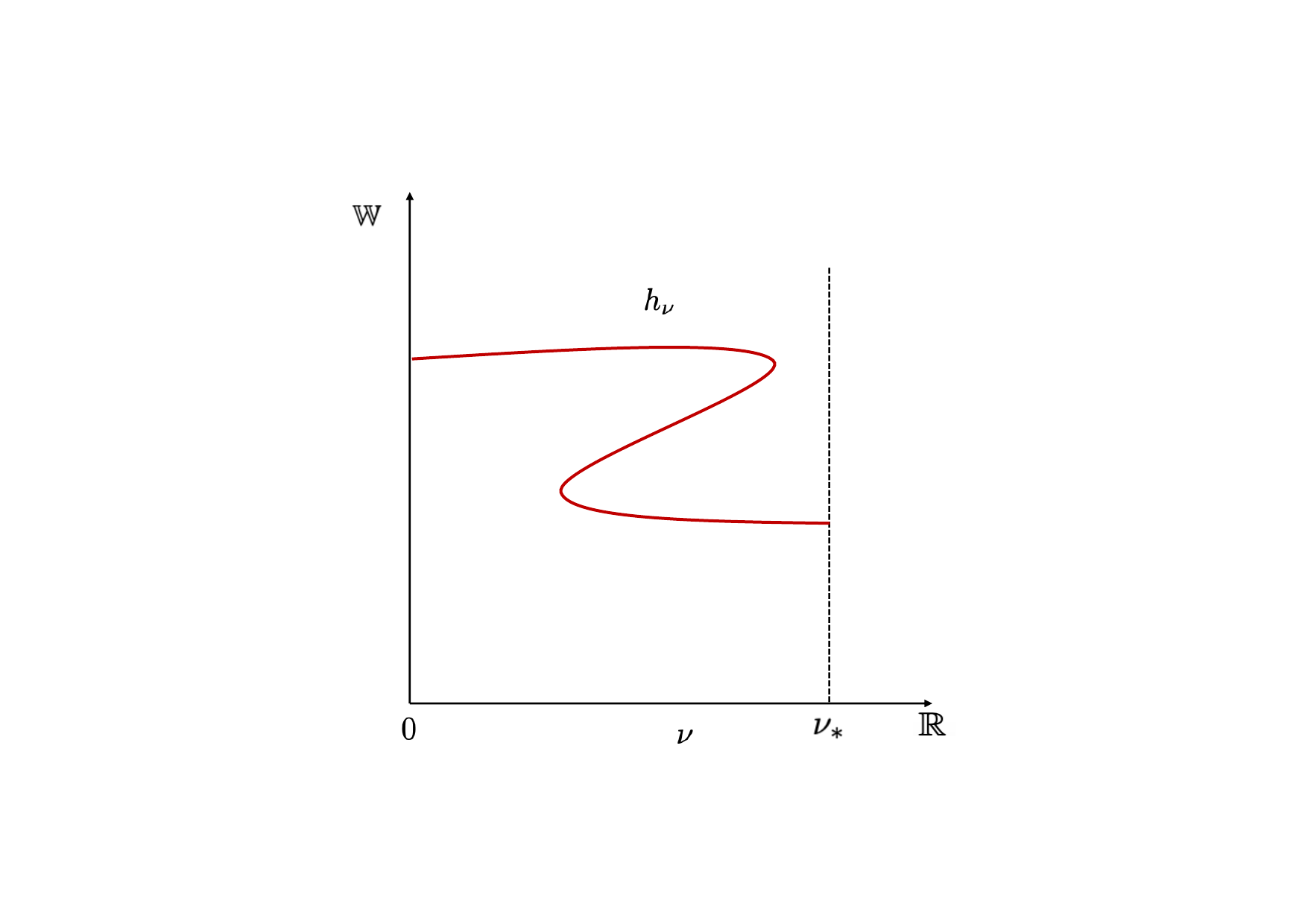} 
   \vskip -2cm
    \caption{A possible bifurcation diagram in Theorem \ref{thm:implicitg}.
     The red curve, $(G(\tau),\bh(\tau))$, is the set of the invariant measures $h_\nu$ (which satisfy the relation $h_{G(\tau)}=\bh(\tau)$).}   \label{NewBirfucation}
\end{figure} 
\begin{figure}[ht]
   \centering
   \includegraphics [scale=0.44]{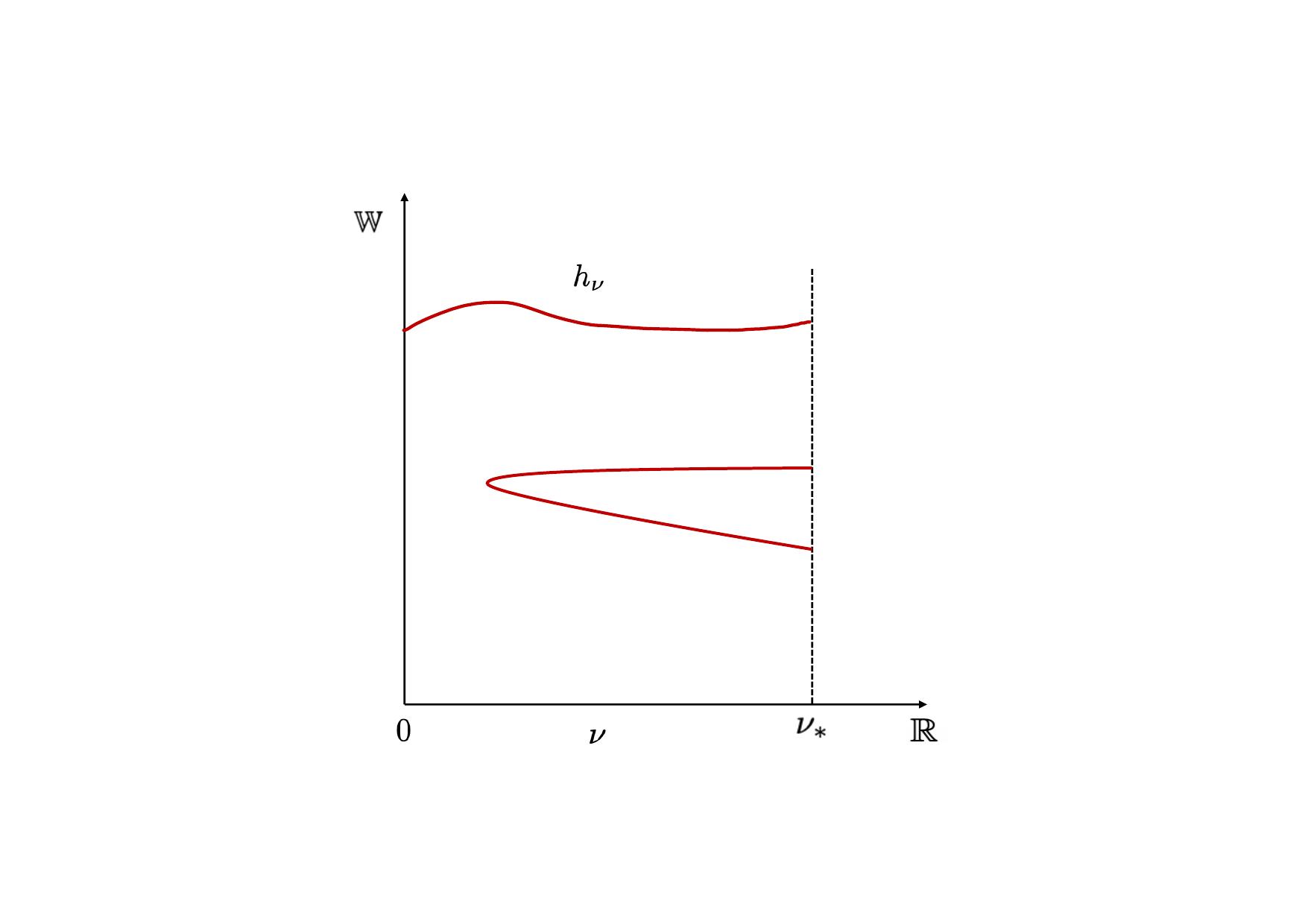} 
   \vskip -2cm
    \caption{Another possible scenario for the bifurcation diagram in Theorem \ref{thm:implicitg} in which two curve of solutions exist.}   \label{2BBirfucation}
\end{figure} 

We conclude this section with a discussion and a theorem concerning the physical relevance of the measures appearing in Theorems \ref{thm:implicit} and  \ref{thm:implicitg}.
In \cite{BLS23}, we defined a physical measure as one that can be realized starting from an absolutely continuous initial measure. However, such a definition does not capture the intuitive meaning of physical measure completely, which is that such measures describe the asymptotic behavior of a large class of initial conditions, and small changes in the initial conditions do not change the asymptotic measure. In the case at hand, the initial condition is a measure. Hence, it is natural to consider an absolutely continuous initial measure, in analogy with the finite-dimensional case, and this was the motivation of the definition in \cite{BLS23}.  Yet, it is also natural to ask that a small change in the initial measure does not change the asymptotic behavior of the system.\\
Hence, we will call the measures that satisfy the condition in  \cite{BLS23} {\em visible}, and we introduce the following new definition to characterize physical measures that include such a natural stability property. This is inspired, but different, from the idea of stability used in \cite{BKZ, Se21}. Let $d_w$ be a metric in the space of measures that metrizes the weak topology.

\begin{definition}\label{def:visible}
An invariant measure $h$ of $T_{\nu,h}$, that is $(T_{\nu,h})_*(h)=h$, is called a {\em visible} measure if there exists $g\in\cC^\infty$ such that 
\[
\lim_{n\to\infty}d_w(\widetilde \cL_\nu^n(g),h)=0.
\]
\end{definition}
\begin{definition}\label{def:physical}
An invariant measure $h$ of $T_{\nu,h}$, that is $(T_{\nu,h})_*(h)=h$, is called a {\em physical} measure if there exists an open set  $\cA\subset\cC^\infty(\bT^d,\bR_+)\setminus\{0\}$ such that, for any sequences $m_n(\mu)=(T_{\nu,m_{n-1}(\mu)})_{*}m_{n-1}(\mu)$, $n\in\bN$, $m_0(\mu):=\mu\in \cM_{\cA}$, where
\[
\cM_{\cA}=\left\{\mu:\; \frac{d\mu}{d\operatorname{Leb}}=\rho,\; \rho=\frac{g}{\int g}\;:\; g\in \cA\right\},
\] 
we have
\[
\lim_{n\to\infty} \sup_{\mu\in \cM_{\cA}}d_w(m_n(\mu), h)=0.
\]
\end{definition}

\begin{remark}
It is worth noting that the unique physical measures identified in \cite{BLS23} are indeed physical also in the stronger sense of Definition \ref{def:physical}, see  in \cite[Lemma 3.13]{BLS23}. 
\end{remark}
For  $z\not\in\sigma_{\cB_1}(\cL_{T_{\nu,h}})\setminus \{1\}$,\footnote{ By $\sigma_\cB(\cL)$ we mean the spectrum of $\cL\in L(\cB,\cB)$.} define the functions
\begin{equation}\label{eq:Thetaz}
\begin{split}
\Theta(z)&:=-\nu (z\Id-\cL_{T_{\nu,h}})^{-1}\divv \cL_{ T_{\nu,h}}\beta h\\
\Xi(z)&=\int\alpha \Theta(z).
\end{split}
\end{equation} 
\begin{theorem}\label{lem:physical}
For $\nu\in[0,\nu_*]$, if $h$ is a visible invariant measure, then $h\in\cB_3$. Moreover, $\widetilde \cL_\nu$ has a G\^ateaux differential $\cD_h$ at $h$, $\cD_h\in L(\cB_1,\cB_1)$ and it reads
\[
\cD_h(\phi)=\cL_{T_{\nu,h}}\phi-\nu\divv \cL_{ T_{\nu,h}}\beta h\int\alpha\phi.
\]
Finally, if $\sigma(\cD_h)\setminus \{1\}\subset \{z\in\bC\;:\; |z|<\kappa\}$ for some $\kappa\in (0,1)$, and $\Xi(1)\neq 1$, then $h$ is a physical measure.
On the contrary, if there exists $z_1\in\sigma (\cD_h)$, $|z_1|>1$, then $h$ is not a physical measure.
\end{theorem}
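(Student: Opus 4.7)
My plan is to handle the four assertions in order: (i) $h\in\cB_3$; (ii) the G\^ateaux formula for $\cD_h$; (iii) physicality under the stable spectral hypothesis; (iv) non-physicality under unstable spectrum.

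Assertion (i) is immediate from Lemma \ref{lem:spectra}, since visibility gives $\cL_{T_{\nu,h}}h=h$ with $T_{\nu,h}\in\cT$. For (ii), I would write $T_{\nu,h+t\phi}=T_{\nu,h}+tc\beta$ with $c=\nu\int\alpha\phi$, split
\[
\widetilde\cL_\nu(h+t\phi)-\widetilde\cL_\nu(h)=\bigl[\cL_{T_{\nu,h+t\phi}}-\cL_{T_{\nu,h}}\bigr]h + t\cL_{T_{\nu,h}}\phi + t\bigl[\cL_{T_{\nu,h+t\phi}}-\cL_{T_{\nu,h}}\bigr]\phi,
\]
and Taylor-expand the first bracket tested against $\vf\in\cC^{\bar r}$; integration by parts turns $\int\nabla\vf\circ T_{\nu,h}\cdot\beta\,dh$ into $-\int\vf\,\divv\cL_{T_{\nu,h}}(\beta h)\,dm$, producing the stated formula after dividing by $t$ and letting $t\to 0$. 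The remainder controls come from the $\cC^2$-regularity of Lemma \ref{lem:der}. Boundedness of $\cD_h$ on $\cB_1$ follows from (A\ref{ass:2}), (A\ref{ass:0}), (A\ref{ass:1}): $h\in\cB_3$ gives $\beta h\in\cB_3$, hence $\divv\cL_{T_{\nu,h}}(\beta h)\in\cB_2\subset\cB_1$, and $\phi\mapsto\int\alpha\phi$ is continuous on $\cB_1$.

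For the spectral picture used in both (iii) and (iv), note that $\cD_h$ preserves $\bV$ (both summands integrate to zero on mean-zero inputs), so in the decomposition $\cB_1=\langle h\rangle\oplus\bV$ it is block upper triangular and $\sigma(\cD_h)=\{1\}\cup\sigma(\cD_h|_\bV)$. For $\phi\in\bV$ and $z\notin\sigma(\cL_{T_{\nu,h}}|_\bV)$ the equation $\cD_h\phi=z\phi$ rearranges to $\phi=\Theta(z)\int\alpha\phi$, so non-triviality forces $\Xi(z)=1$. Combining $\Xi(1)\neq 1$ with the spectral gap $\sigma(\cL_{T_{\nu,h}}|_\bV)\subset\{|z|<\vartheta\}$ from Lemma \ref{lem:spectra} gives $1\notin\sigma(\cD_h|_\bV)$, and the hypothesis $\sigma(\cD_h)\setminus\{1\}\subset\{|z|<\kappa\}$ then forces $\sigma(\cD_h|_\bV)\subset\{|z|<\kappa\}$. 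For (iii), the estimate $\|\cD_h^n|_\bV\|_{\cB_1}\leq C\kappa^n$ together with the quadratic remainder $\|R(\phi)\|_{\cB_1}\leq C\|\phi\|_{\cB_1}^2$ from Lemma \ref{lem:der} exhibits, by a standard contraction-type argument, a $\cB_1$-neighborhood $\cU$ of $h$ in $\bW$ on which $\widetilde\cL_\nu^n\to h$ uniformly in $\cB_1$. Visibility supplies $g_0\in\cC^\infty$ with $\widetilde\cL_\nu^n(g_0)\to h$ weakly; uniform $\cB_3$-bounds from iterating (A\ref{ass:6}) combined with the compact embedding (A\ref{ass:3}) upgrade this to $\cB_1$-convergence, so $\widetilde\cL_\nu^N(g_0)\in\cU$ for some $N$. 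Continuity of the finite iterate $\widetilde\cL_\nu^N$ in $\cC^\infty$ (Lemma \ref{lem:regularity}) then yields a $\cC^\infty$-open $\cA\ni g_0$ with $\widetilde\cL_\nu^N(\cM_\cA)\subset\cU$, and uniform contraction on $\cU$ gives physicality.

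For (iv), if $z_1\in\sigma(\cD_h)$ with $|z_1|>1$ the decomposition places $z_1\in\sigma(\cD_h|_\bV)$, so $h$ is a linearly unstable fixed point of $\widetilde\cL_\nu$; since $|z_1|$ exceeds the essential radius of $\cD_h$ (equal to that of $\cL_{T_{\nu,h}}$, which is $<\vartheta<1$ by Lemma \ref{lem:spectra} since $\cD_h$ is a rank-one perturbation of $\cL_{T_{\nu,h}}$), $z_1$ is an isolated eigenvalue of finite multiplicity and the associated spectral projector $P_u$ on $\cB_1$ is nontrivial. By density we may choose $\phi_1\in\cC^\infty\cap\bV$ with $P_u\phi_1\neq 0$. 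A standard stable-manifold theorem in Banach space, applied to the $\cC^2$ map $\widetilde\cL_\nu$ at $h$ (using Lemma \ref{lem:der}), then produces a local stable set $W^s(h)$ of codimension at least $\dim\operatorname{Range}(P_u)\geq 1$ in $\cB_1$. For any candidate open $\cA\subset\cC^\infty(\bT^d,\bR_+)\setminus\{0\}$ and any $\rho\in\cA$, the curve $\epsilon\mapsto\rho+\epsilon\phi_1$ lies in $\cA$ for small $\epsilon$ and is transverse to $W^s(h)$ (since $\phi_1\notin\ker P_u$ is not in the tangent space of $W^s$ at its intersections with the curve), so it contains initial densities whose iterates do not approach $h$, contradicting Definition \ref{def:physical}. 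The main technical obstacles are (1) upgrading the weak-topology visibility of $h$ to $\cB_1$-convergence and propagating the $\cB_1$ contraction basin $\cU$ to a $\cC^\infty$-open neighborhood of a smooth initial condition, and (2) invoking a stable-manifold statement for $\widetilde\cL_\nu$ on $\cB_1$, which combines the $\cC^2$-regularity from Section \ref{sec:regularity} with the hyperbolic structure of $\cD_h$ supplied by the spectral analysis above.
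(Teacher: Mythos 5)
Your parts (i) and (ii) are essentially the paper's (the paper first runs a compactness argument in $\cB_2$ before invoking Lemma \ref{lem:spectra}, whose statement requires $h\in\cB_0$, so your direct appeal to that lemma is slightly off but easily repaired), and your reduction of the spectrum of $\cD_h|_{\bV}$ to the equation $\Xi(z)=1$ is Proposition \ref{prop:stability}. The genuine gaps are in (iii) and (iv), and both have the same source: you treat $\widetilde\cL_\nu$ as if it were $\cC^2$, or at least $\cC^1$ with a quadratic remainder, in the single norm $\cB_1$. It is not. The second-order Taylor estimate for $\cL_{S(u)}$ (third inequality of Lemma \ref{lem:regularity}) loses two spaces in the scale, so the remainder $\widetilde\cL_\nu(h+\phi)-\widetilde\cL_\nu(h)-\cD_h\phi$ is controlled only by $\|\phi\|_{\cB_2}\|\phi\|_{\cB_0}$, not by $C\|\phi\|_{\cB_1}^2$; Lemma \ref{lem:der} gives $\cC^2$ regularity of $F$ only into $\cB_0$ and does not yield the bound you quote. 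Consequently your ``standard contraction-type argument'' in (iii) does not close: after one step you have contracted $\|\phi\|_{\cB_1}$ but you have no control of $\|\phi\|_{\cB_2}$, which you need to bound the next remainder. The paper resolves this with the two-norm bootstrap of Lemma \ref{lem:chain-rule}, propagating simultaneously a geometric decay of the $\cB_1$ norm and a fixed smallness threshold for the $\cB_2$ norm; note also that the resulting basin is a $\cB_2$-ball around $h$, not a $\cB_1$-neighborhood as you claim, and it is precisely this that makes it compatible with a $\cC^\infty$-open set $\cA$.

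The same issue is fatal to your step (iv): a Banach-space stable manifold theorem needs the nonlinearity to be uniformly superlinear in a fixed norm on a fixed ball, and on $\cK$ with the $\cB_1$ topology the remainder $H\|\phi\|_{\cB_2}\|\phi\|_{\cB_0}$ is merely linear in $\|\phi\|_{\cB_1}$ when restricted to $\cB_2$-bounded sets, so the theorem you invoke does not apply. The paper replaces the stable manifold by a bespoke argument that exploits the uniformity built into Definition \ref{def:physical}: assuming $h$ physical, all iterates of $\cM_{\cA}$ are eventually confined to a small $\cB_2$-ball $\bB_\ve$ around $h$ (Lemma \ref{lem:enterBe}); inside that ball one proves cone invariance and expansion for the orbit-dependent products $\cD_{g_{m-1}}\cdots\cD_g$ (Lemma \ref{lem:hyper}, which requires the Keller--Liverani perturbation theory, not just the spectrum of $\cD_h$ itself); and then the difference of two nearby orbits with increment in the unstable cone $\cC_2$ grows exponentially, contradicting confinement. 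Your transversality/Baire argument could in principle replace this last step, but only after constructing the stable set, which is exactly what the available regularity does not permit via off-the-shelf theorems.
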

The proof of the above theorem can be found at the end of section \ref{sec:physical}.
\section{Regularity properties}\label{sec:regularity}
Our first task is to explore the regularity of maps $\cL_{S(u)}$, where $S$ is a map in $\cC^2(\bR,\cC^{\bar r}(\bT^2,\bT^2))$.\\
For each compact interval $U\subset\bR$ and $S\in\cC^2(U,\cC^{\bar r}(\bT^d,\bT^d))$, we set, for all $\psi\in\cC^\infty (\bT^2,\bT^2)$ and  $u\in U$
\begin{equation}\label{eq:derivP}
\cP_{u}\psi=-\divv \cL_{ S(u)}(S'(u)\psi).
\end{equation}

\begin{lemma}\label{lem:regularity}
There exists $C_\star>0$ such that, provided $\bar r$ is large enough, for each compact interval $U\subset\bR$ and $S\in\cC^2(U,\cC^{\bar r}(\bT^d,\bT^d))$, we have, for all $s\in \bR$, 
\[
\begin{split}
&\|\cP_{u}\psi\|_{\cB_{i}}\leq  C_\star\|S\|_{\cC^{\bar r}}\|\psi\|_{\cB_{i+1}}, \quad\forall \psi\in\cB_{i+1},\; i\in\{0,1,2\} \\
&\|(\cL_{S(u+s)}-\cL_{S(u)})\psi\|_{\cB_{i}}\leq  C_\star s\|S\|_{\cC^{\bar r}}\|\psi\|_{\cB_{i+1}},\quad\forall \psi\in\cB_{i+1}, \; i\in\{0,1,2\}\\
&\|(\cL_{S(u+s)}-\cL_{S(u)}-s\cP_{u})\psi\|_{\cB_{i}}\leq C_\star s^2\|S\|_{\cC^{\bar r}}^2\|\psi\|_{\cB_{i+2}},\quad\forall \psi\in\cB_{i+2},\; i\in\{0,1\}.
\end{split}
\]
\end{lemma}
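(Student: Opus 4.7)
The plan is to treat the three bounds as three levels of the same Taylor expansion of the operator-valued map $u\mapsto \cL_{S(u)}$ in the parameter $u$, with $\cP_u$ playing the role of the first derivative. The first bound is a direct consequence of the abstract assumptions on $\{\cB_i\}$, the second follows from a Bochner-integral form of the fundamental theorem of calculus, and the third from a Taylor remainder estimate; the only real issue is to justify that the $u$-derivative of $\cL_{S(u)}$ is indeed $\cP_u$ on the right scale of spaces. I will assume throughout that $\bar r$ is large enough that $\cC^{\bar r}\subset \cC^{\bar c\,(i+2)+\hat c}$ for all $i\in\{0,1,2\}$, so that assumption~(A\ref{ass:1}) applies with constant $C_*$ in each line below, and I will let $C$ denote a generic constant depending only on $C_*$ and the operator norms appearing in (A\ref{ass:2}), (A\ref{ass:0}).

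For the first bound, I simply compose the three basic properties: $S'(u)\psi\in \cB_{i+1}$ with $\|S'(u)\psi\|_{\cB_{i+1}}\le C_*\|S'(u)\|_{\cC^{\bar c(i+1)+\hat c}}\|\psi\|_{\cB_{i+1}}$ by (A\ref{ass:1}); then $\cL_{S(u)}(S'(u)\psi)\in\cB_{i+1}$ by (A\ref{ass:2}); and finally $\divv$ maps $\cB_{i+1}$ into $\cB_i$ by (A\ref{ass:0}). Multiplying the resulting constants yields the bound $\|\cP_u\psi\|_{\cB_i}\le C\|S\|_{\cC^{\bar r}}\|\psi\|_{\cB_{i+1}}$ (the $u$-dependence of $\|S'(u)\|_{\cC^{\bar r}}$ is absorbed into $\|S\|_{\cC^2(U,\cC^{\bar r})}=:\|S\|_{\cC^{\bar r}}$).

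For the second bound, I will show that $\partial_u \cL_{S(u)}=\cP_u$ as an operator from $\cB_{i+1}$ to $\cB_i$. First test with smooth $g,\psi\in\cC^\infty$: using \eqref{eq:L_def} and differentiating under the integral,
\[
\tfrac{d}{du}\int g\cdot \cL_{S(u)}\psi\,dm=\int (\nabla g)\circ S(u)\cdot S'(u)\,\psi\,dm=-\int g\cdot \divv\cL_{S(u)}(S'(u)\psi)\,dm,
\]
the last equality obtained by applying \eqref{eq:L_def} componentwise to $S'(u)\psi$ and then integrating by parts. This identifies the weak derivative with $\cP_u\psi$; since $\cP_{\,\cdot\,}\psi$ is continuous from $U$ into $\cB_i$ by the first bound together with the $\cC^2$ regularity of $u\mapsto S(u)$, the map $u\mapsto \cL_{S(u)}\psi$ is $\cC^1$ into $\cB_i$ and
\[
\cL_{S(u+s)}\psi-\cL_{S(u)}\psi=\int_0^s \cP_{u+\tau}\psi\,d\tau
\]
in $\cB_i$. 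Density of $\cC^\infty$ in $\cB_{i+1}$ (see \eqref{eq:embedding}) extends this identity to all $\psi\in\cB_{i+1}$, and taking $\cB_i$-norms yields the second bound.

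For the third bound, I apply the same idea one order higher: writing
\[
\cL_{S(u+s)}\psi-\cL_{S(u)}\psi-s\cP_u\psi=\int_0^s\!\int_0^\tau \partial_\sigma \cP_{u+\sigma}\psi\,d\sigma\,d\tau,
\]
it suffices to estimate $\partial_\sigma \cP_{u+\sigma}\psi$ in $\cB_i$. Differentiating the definition of $\cP_u$ once more, and using the identity $\partial_u \cL_{S(u)}F=-\divv\cL_{S(u)}(S'(u)F)$ componentwise on the vector $F=S'(u)\psi$, gives
\[
\partial_u \cP_u\psi=\divv\divv\bigl[\cL_{S(u)}\bigl(S'(u)\otimes S'(u)\,\psi\bigr)\bigr]-\divv\cL_{S(u)}\bigl(S''(u)\psi\bigr).
\]
The first term, by (A\ref{ass:1}), (A\ref{ass:2}), (A\ref{ass:0}) used twice, is bounded in $\cB_i$ by $C\|S'(u)\|_{\cC^{\bar r}}^2\|\psi\|_{\cB_{i+2}}$; the second, using (A\ref{ass:0}) once and the inclusion $\cB_{i+2}\subset\cB_{i+1}$, by $C\|S''(u)\|_{\cC^{\bar r}}\|\psi\|_{\cB_{i+2}}$. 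Both are controlled by $C\|S\|_{\cC^{\bar r}}^2\|\psi\|_{\cB_{i+2}}$, and the double integral gives the $s^2$ factor. The main obstacle is the last step: ensuring that the computation of $\partial_u \cP_u$ is legitimate as an identity of bounded operators $\cB_{i+2}\to\cB_i$ rather than merely formal. This is again handled by first verifying the identity weakly on $\cC^\infty$ test elements (using \eqref{eq:L_def} and integration by parts) and then invoking the density and embedding properties \eqref{eq:embedding} together with the continuity in $\sigma$ of the right-hand side provided by the previously established bounds.
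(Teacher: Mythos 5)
Your proposal is correct and follows essentially the same route as the paper: the first bound by composing (A\ref{ass:1}), (A\ref{ass:2}) and (A\ref{ass:0}); the second by the fundamental theorem of calculus in the parameter plus integration by parts (the paper phrases this weakly against $\cC^\infty$ test functions and takes suprema over a dense subset of the unit ball of $\cB_i'$, which is the same as your Bochner/density argument); and the third by the second-order Taylor remainder, whose two terms $\divv\divv[\cL_{S}(S'\otimes S'\psi)]$ and $\divv\cL_{S}(S''\psi)$ coincide with the $\partial_{x_i}\partial_{x_j}\cL_{S(\tau')}(S_i'S_j'\psi)$ and $\partial_{x_i}\cL_{S(\tau)}(S_i''\psi)$ terms in the paper. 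No gaps; the justification you flag as the "main obstacle" is handled in the paper exactly as you propose, by working weakly on smooth test elements and using \eqref{eq:embedding}.
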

\begin{proof}
The first inequality follows immediately from Assumption (A\ref{ass:0}), (A\ref{ass:1}) and  (A\ref{ass:6}).
Next, we study the continuity of $\cL_{S(\cdot)}:\bW\subset \cB_0\to\ L(\cB_i,\cB_i)$. For 
$\vf,\psi\in\cC^\infty$, we can compute
\begin{equation}\label{eq:der0}
\begin{split}
&\int\vf\left(\cL_{S(u+s)}-\cL_{S(u)}\right)\psi=\int\vf\circ S(u+s)\psi-\int\vf\circ S(u)\psi\\
&=\int_u^{u+s}\int\langle(\nabla\vf)\circ S(\tau),S'(\tau)\rangle\psi d\tau\\
&=\int_u^{u+s}\int\langle(\nabla\vf), \cL_{ S(\tau)}S'(\tau)\psi\rangle\\
&=\int_u^{u+s}\int\vf\{-\divv \cL_{ S(\tau)}S'(\tau)\psi\} ds.
\end{split}
\end{equation}
It follows that, for each $i\in\{0,1,2\}$, for some constant $C_\star$ large enoug, we have, recalling Assumptions (A\ref{ass:0}),  (A\ref{ass:6}) and (A\ref{ass:1}),
\[
\begin{split}
&\left|\int\vf\left(\cL_{S(u+s)}-\cL_{S(u)}\right)\psi\right|
\leq s\|\vf\|_{\cB_i'}\sup_{\tau\in[u,u+s]}\left\|\divv \cL_{ S(\tau)}S'(\tau)\psi\right\|_{\cB_i}\\
&\leq s\, C_\star C_*^{-2}\|\vf\|_{\cB_i'}\sup_{\tau\in[u,u+s]}\left\|\cL_{ S(\tau)}S'(\tau)\psi\right\|_{\cB_{i+1}}\\
&\leq s\, C_\star\|\vf\|_{\cB_i'}\|\psi\|_{\cB_{i+1}}\|S'\|_{\cC^{\bar c (i+1)+\hat c}}
\end{split}
\]
where $\cB_i'$ is the dual space of $\cB_i$.
Taking the sup on $\vf\in\cB_1'\cap \cC^{\infty}$, which is dense in $\cB_i'$ by \eqref{eq:embedding}, with  $\|\vf\|_{\cB_i'}\leq 1$, yields
\[
\left\|\cL_{S(u+s)}\psi-\cL_{S(u)}\psi\right\|_{\cB_i}\leq s\, C_\star\|\psi\|_{\cB_{i+1}}\|S\|_{\cC^{\bar r}}.
\]
We can continue \eqref{eq:der0} and write
\[
\begin{split}
&\int\vf\left(\cL_{S(u+s)}\psi-\cL_{S(u)}\psi+s\divv \cL_{ S(u)}S'(u)\psi\right)=\\
&=\int_u^{u+s}\int \langle(\nabla\vf)\circ S(\tau), S'(\tau)\rangle\psi- \langle(\nabla\vf)\circ S(u), S'(u)\rangle\psi\} d\tau\\
&=\int_u^{u+s}\hskip -12pt d\tau\int_u^{\tau}\hskip -6pt d\tau'\int  \langle(\nabla\vf)\circ S(\tau), S''(\tau')\rangle\psi
+D^2_{S(\tau')}\vf(S'(\tau'),S'(\tau))\psi\\
&=-\sum_{i=1}^d\int_u^{u+s}\hskip -12pt d\tau \int_u^{\tau}\hskip -6pt d\tau' \int \vf \partial_{x_i}\cL_{S(\tau)} (S_i''(\tau')\psi)\\
&\phantom{=}
+\sum_{i,j=1}^d\int_u^{u+s}\hskip -12pt d\tau \int_u^{\tau}\hskip -6pt d\tau' \int \vf\partial_{x_i}\partial_{x_j}\cL_{S(\tau')}(S_i'(\tau')S'_j(\tau)\psi),
\end{split}
\]
which implies, recalling Assumptions (A\ref{ass:0}), (A\ref{ass:1}) and (A\ref{ass:6}),
\[
\begin{split}
\left|\int\vf\left(\cL_{S(u+s)}\psi-\cL_{S(u)}\psi-s\cP_{u}\psi\right)\right|\leq &C_\star s^2
\|\vf\|_{\cB_i'}\Big[\|S''\|_{\cC^{\bar\alpha(i+1)+\bar \beta}}\|\psi\|_{\cB_{i+1}}\\
&+\|S'\|_{\cC^{\bar c(i+2)+\hat c}}^2\|\psi\|_{\cB_{i+2}}\Big].
\end{split}
\]
Taking the sup on $\vf\in\cB_i'\cap \cC^\infty$, with $\|\vf\|_{\cB_i'}$, yields, for $\bar r$ large enough,
\[
\left\|\cL_{S(u+s)}\psi-\cL_{S(u)}\psi-s\cP_{u}\psi\right\|_{\cB_i}\leq C_\star s^2
\|S\|_{\cC^{\bar r}}^2\|\psi\|_{\cB_{i+2}}.
\]
\end{proof}
\begin{remark}
Note that the third equation in the statement of Lemma \ref{lem:regularity} implies that
\begin{equation}\label{eq:derivLP}
\frac{d}{du}\cL_{S(u)}=\cP_u
\end{equation}
if we interpret $\cL_{S(u)}$ as a function from $\bR$ to $L(\cB_{i+2},\cB_i)$, $i\in\{0,1\}$.
\end{remark}
We can now use the above Lemma to investigate the regularity of the operator $\cL_{T_{\nu,h}}$ seen as a map from $\Omega$ to  $L(\cB_{i+2},\cB_i)$.
\begin{lemma}\label{lem:L-der}
$\cL_{T_{\cdot,\cdot}}\!\in\cC^1(\Omega, L(\cB_{i+2},\cB_i))$, $i\in\{0,1\}$, so it is Fréchect Differentiable. The derivative is given by, for each $h_1\in\bV$,
\begin{equation}\label{eq:basic_der}
\left[\left(\frac{\partial}{\partial h}\cL_{T_{\nu,h}}\right)\!(h_1)\right]\psi=\nu\{-\divv \cL_{ T_{\nu,h}}\beta\psi\}\int\alpha h_1.
\end{equation}

\begin{equation}\label{eq:basic_nuder}
\left(\frac{\partial}{\partial \nu}\cL_{T_{\nu,h}}\right)\psi=- \left(\divv\cL_{T_{\nu, h}}\beta \psi\right)\int \alpha h.
\end{equation}
\end{lemma}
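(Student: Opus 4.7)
The crucial structural observation is that $T_{\nu,h}$ depends on the pair $(\nu,h)\in\Omega$ only through the single scalar
\[
\lambda(\nu,h):=\nu\,h(\alpha)=\nu\!\int\!\alpha h,
\]
since $T_{\nu,h}(x)=T(x)+\lambda(\nu,h)\,\beta(x)\bmod 1$. Thus if I introduce the $\cC^\infty$ one-parameter family $S:\bR\to\cC^{\bar r}(\bT^d,\bT^d)$ given by $S(\lambda)=T+\lambda\beta$, then
\[
\cL_{T_{\nu,h}}=\Psi(\lambda(\nu,h)),\qquad \Psi(\lambda):=\cL_{S(\lambda)}.
\]
So I only need to differentiate a scalar variable, and then apply the chain rule.

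First I would apply Lemma~\ref{lem:regularity} to $S$. Since $S'(\lambda)=\beta$ and $\|S\|_{\cC^{\bar r}}$ is uniformly bounded on any compact $\lambda$-interval (in particular on $\lambda([0,\nu_*]\times\bW)$, which is bounded by assumption (A\ref{ass:1})), the lemma yields that $\Psi\in\cC^1(\bR,L(\cB_{i+2},\cB_i))$ with
\[
\Psi'(\lambda)\psi=\cP_\lambda\psi=-\divv\cL_{S(\lambda)}(\beta\psi),
\]
together with a uniform Lipschitz bound on $\lambda\mapsto\Psi(\lambda)$ as a map into $L(\cB_{i+1},\cB_i)$.

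Next I would apply the chain rule. The map $\lambda(\nu,h)=\nu\int\alpha h$ is a bounded bilinear form on $\bR\times\cB_1$ (using $|\int\alpha h|\le\|\alpha\|_{\cC^{\bar r}}\|h\|_{\cB_0'{}'}\le C\|h\|_{\cB_1}$ together with (A\ref{ass:1})), hence Fr\'echet $\cC^\infty$ with
\[
\partial_\nu\lambda=\int\alpha h,\qquad D_h\lambda(h_1)=\nu\!\int\alpha h_1 .
\]
Composing with $\Psi$ (Fr\'echet chain rule for Banach-space valued maps) gives directly
\[
\partial_\nu\cL_{T_{\nu,h}}=\Psi'(\lambda)\cdot\partial_\nu\lambda=\bigl\{-\divv\cL_{T_{\nu,h}}(\beta\,\cdot\,)\bigr\}\!\int\alpha h,
\]
which is formula \eqref{eq:basic_nuder}, and
\[
D_h\cL_{T_{\nu,h}}(h_1)=\Psi'(\lambda)\cdot D_h\lambda(h_1)=\nu\bigl\{-\divv\cL_{T_{\nu,h}}(\beta\,\cdot\,)\bigr\}\!\int\alpha h_1,
\]
which is formula \eqref{eq:basic_der}. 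The Fr\'echet differential at $(\nu,h)$ is then simply the sum of these two partials applied to the increment $(s,h_1)$, and the remainder bound follows by writing
\[
\cL_{T_{\nu+s,h+h_1}}-\cL_{T_{\nu,h}}=\Psi(\lambda+\Delta\lambda)-\Psi(\lambda),
\]
with $\Delta\lambda=s\int\alpha h+\nu\int\alpha h_1+s\int\alpha h_1$, and then Taylor-expanding $\Psi$ via the third estimate in Lemma~\ref{lem:regularity}; the only non-linear term is the product $s\int\alpha h_1$, which is $o(|s|+\|h_1\|_{\cB_1})$ as required.

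For the $\cC^1$ statement I need continuity of $(\nu,h)\mapsto \partial_\nu\cL_{T_{\nu,h}}$ and of $(\nu,h)\mapsto D_h\cL_{T_{\nu,h}}$ in the appropriate operator topology. Both derivatives are expressed through $\Psi'(\lambda(\nu,h))$ multiplied by a continuous scalar/linear factor. Continuity of $\lambda(\cdot,\cdot)$ on $\Omega$ is immediate, while continuity of $\lambda\mapsto\Psi'(\lambda)\in L(\cB_{i+2},\cB_i)$ is exactly the second estimate of Lemma~\ref{lem:regularity} applied to the curve $\lambda\mapsto S(\lambda)$ (using (A\ref{ass:0}) to absorb the divergence). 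The only mild subtlety is the loss of one derivative built into Lemma~\ref{lem:regularity}, which is why the target space has to be $L(\cB_{i+2},\cB_i)$ and not $L(\cB_{i+1},\cB_i)$; this is already reflected in the statement. I do not anticipate serious obstacles beyond bookkeeping of these regularity indices.
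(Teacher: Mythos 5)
Your proposal is correct, and it rests on the same key ingredient as the paper's proof, namely Lemma \ref{lem:regularity} applied to a one-parameter family of maps; but you package the argument differently. The paper fixes a direction $h_1$ (resp.\ varies $\nu$), sets $S(u)=T_{\nu,h_0+uh_1}$ (resp.\ $S(u)=T_{u,h_0}$), reads off the G\^ateaux derivatives from Lemma \ref{lem:regularity}, and then upgrades to Fr\'echet differentiability by checking continuity of these G\^ateaux differentials and invoking \cite[Proposition 3.2.15]{DM13}. You instead exploit the structural fact that $T_{\nu,h}$ depends on $(\nu,h)$ only through the scalar $\lambda(\nu,h)=\nu\int\alpha h$, write $\cL_{T_{\nu,h}}=\Psi(\lambda(\nu,h))$ with $\Psi(\lambda)=\cL_{T+\lambda\beta}$, and compose a $\cC^1$ function of one real variable with a bounded bilinear form; the Fr\'echet differentiability then comes from an elementary chain rule plus the explicit second-order remainder estimate of Lemma \ref{lem:regularity} (your identification of the remainder $\Delta\lambda=s\int\alpha h+\nu\int\alpha h_1+s\int\alpha h_1$ and of the quadratic error is correct). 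What your route buys is transparency: the scalar reduction makes both formulas \eqref{eq:basic_der} and \eqref{eq:basic_nuder} immediate and avoids appealing to the ``continuous G\^ateaux implies Fr\'echet'' theorem, at the price of being tied to the special mean-field form \eqref{eq:coupledmap}; the paper's route is the one that would survive the generalization to kernels $\int K(x,y)h(dy)$ mentioned in Section \ref{sec:bifo}. Two cosmetic points: the bound $\left|\int\alpha h\right|\leq C\|h\|_{\cB_0}$ you need is exactly the second half of (A\ref{ass:1}) combined with its first half (your notation $\|h\|_{\cB_0''}$ is nonstandard), and for the continuity of $\lambda\mapsto\Psi'(\lambda)$ in $L(\cB_{i+2},\cB_i)$ you should make explicit that you apply the second estimate of Lemma \ref{lem:regularity} at level $i+1$ to $\beta\psi$ and then use (A\ref{ass:0}) and (A\ref{ass:1}), which is what your parenthetical remark gestures at and is also what the paper does.
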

\begin{proof}
For each $h_0\in\bW$, and $h_1\in\bV$ we can write $S(u)=T_{\nu, h_0+uh_1}$, $\|h_1\|_{\cB_0}=1$. Then, by \eqref{eq:coupledmap},
\[
S'(u)=\nu\beta\int\alpha h_1.
\]
Then we apply Lemma \ref{lem:regularity} to write
\[
\frac{d}{du}\cL_{S(u)}\psi=-\nu \left(\divv\cL_{T_{\nu, h_0+uh_1}}\beta \psi\right)\int \alpha h_1.
\]
Note that the above is the G\^ateaux derivative of $\cL_{T_{\nu,h}}$ with respect to $h$ in the direction $h_1$.

On the other hand, if we set $S(u)=T_{u,h_0}$, we have
\[
S'(u)=\beta \int \alpha h_0.
\]
Hence, Lemma \ref{lem:regularity} implies
\[
\frac{d}{du}\cL_{S(u)}\psi=- \left(\divv\cL_{T_{\nu, h_0+uh_1}}\beta \psi\right)\int \alpha h_0.
\]
Then the G\^ateaux derivatives $\frac{\partial}{\partial h}\cL_{T_{\nu,h}}$ and $\frac{\partial}{\partial_\nu}\cL_{T_{\nu,h}}$, by assumptions (A\ref{ass:0})  and the second of Lemma \ref{lem:regularity}, are continuous functions from $\Omega$ to $L(\cB_{i+2},\cB_i)$, $i\in\{0,1\}$. Hence, by \cite[Proposition 3.2.15]{DM13}, $\cL_{T_{\cdot,\cdot}}:\Omega\to L(\cB_{i+2},\cB_i)$ is Fréchect Differentiable with derivative given by \eqref{eq:basic_der} and \eqref{eq:basic_nuder}. 
\end{proof}

\subsection{Properties of the functions $H$ and $F$}\label{sec:Fregularity}\ \\
Our next goal is the study of the regularity of $F$, defined in \eqref{eq:implicit}, hence we have to start studying 
\[
H:\Omega\to\cB_i,
\]
where $\Omega:=[0,\nu_*]\times \bW$ is equipped with the topology of $\bR\times\cB_0$. Note that, by Assumption (A\ref{ass:3}), $\Omega$ is then a compact set.
Assumption (A\ref{ass:4}) implies
\begin{equation}\label{eq:specdeco}
\cL_{T_{\nu,h}}=\Pi_{T_{\nu,h}} +Q_{T_{\nu,h}},
\end{equation}
 as an operator on $\cB_i$, $i=1,2, 3$, where $\Pi_{T_\nu, h}$ is the one-dimensional projection corresponding to the eigenvalue 1 and $\Pi_{T_{\nu,h}}Q_{T_{\nu,h}}= Q_{T_{\nu,h}}\Pi_{T_{\nu,h}}=0$.
\begin{lemma}\label{lem:H-prop}
For each $(\nu,h)\in\Omega$, there exists a unique $H\in  \cB_1$ such that
\[
\Pi_{T_\nu,h}\phi=H(\nu,h)\int_{\bT^d} \phi,\quad\text{  and }\quad \|Q^n_{T_{\nu,h}}\|_{\cB_1}\le C\vartheta^n,
\]
for some $C>0$. In particular, 
\[
\int H(\nu,h)=1;\quad \cL_{T_{\nu,h}} H(\nu,h)=H(\nu,h), \textrm{ and   }Q_{T_{\nu,h}}H(\nu,h)=0.
\]  
Moreover, $H\in\cC_*^1(\Omega,\cB_1)\cap \cC^0(\Omega,\cB_2)\cap  L^\infty(\Omega,\cB_3)$,\footnote{ Here, $\cC_*^r$ means that the function is only G\^ateaux differentiable $r$ times, no assumption on the continuity of the derivatives is implied. We will use, as usual, $\cC^r$ for Fréchet differentiable functions with continuous differential. See \cite[section 3.2]{DM13} for the relation between these two differentials and their properties.} and there exists $\Const>0$, such that, for each $\ve>0$, $(\nu_1,h_1), (\nu_2,h_2)\in\Omega$, for which $|\nu_1-\nu_2|+\|h_1-h_2\|_{\cB_0}\leq \ve$, we have
\[
\|H(\nu_1,h_1)-H(\nu_2,h_2)\|_{\cB_2}\leq \Const \ve\ln\ve^{-1}.
\]
\end{lemma}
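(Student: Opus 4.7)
The plan is to first set up the spectral decomposition at each $(\nu, h) \in \Omega$ and then propagate regularity through a resolvent identity. I would begin by applying Lemma~\ref{lem:spectra} to $S = T_{\nu, h} \in \cT$: on each $\cB_i$, $i = 1, 2, 3$, this gives $1$ as a simple isolated eigenvalue of $\cL_{T_{\nu, h}}$ with the rest of the spectrum in $\{|z| < \vartheta\}$. The corresponding Riesz projection $\Pi_{T_{\nu, h}}$ is rank-one, and since $\int \cL_{T_{\nu, h}} \phi = \int \phi$ the left eigenvector is the constant function, forcing $\Pi_{T_{\nu, h}} \phi = H(\nu, h) \int \phi$ with $H(\nu, h) \in \cB_3$, $\int H = 1$, and both $\cL_{T_{\nu, h}} H = H$ and $Q_{T_{\nu, h}} H = 0$. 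The geometric bound $\|Q^n_{T_{\nu, h}}\|_{\cB_1} \le C \vartheta^n$ then follows from (A\ref{ass:4}) and holomorphic functional calculus. The uniform bound $H \in L^\infty(\Omega, \cB_3)$ comes from a bootstrap: Lemma~\ref{lem:spectra} yields $\|H\|_{\cB_1} \le K_\star$, and writing $H = \cL^n_{T_{\nu, h}} H$ together with (A\ref{ass:6}) for $n$ large enough that $C_* \vartheta^n < 1/2$ gives $\|H\|_{\cB_i} \le 2K \|H\|_{\cB_{i-1}}$ for $i = 2, 3$.

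For the regularity statements, I would set $H_j = H(\nu_j, h_j)$, $\cL_j = \cL_{T_{\nu_j, h_j}}$, and exploit that $H_1 - H_2 \in \bV$ since $\int H_j = 1$. Subtracting the two fixed-point equations and rearranging yields the key identity
\[
H_1 - H_2 = (I - \cL_1)^{-1}\big|_\bV \, (\cL_1 - \cL_2) H_2,
\]
which is legitimate because (A\ref{ass:4}) gives a uniform spectral gap of $\cL_1$ on $\bV \cap \cB_i$ for $i = 1, 2$, hence a uniform bound on $(I - \cL_1)^{-1}|_\bV$ on each of these spaces. For the operator-difference factor, I would apply Lemma~\ref{lem:regularity} along the affine path joining the two parameter points, noting that, by \eqref{eq:coupledmap} and the multiplier/duality consequences of (A\ref{ass:1}),
\[
\|S'(u)\|_{\cC^{\bar r}} \le C\big(|\nu_1 - \nu_2| + \|h_1 - h_2\|_{\cB_0}\big) = C \ve,
\]
so that $\|(\cL_1 - \cL_2) H_2\|_{\cB_i} \le C \ve \|H_2\|_{\cB_{i+1}}$. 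Combined with the $L^\infty$ bounds from the previous paragraph this already produces the Lipschitz estimate $\|H_1 - H_2\|_{\cB_1} \le C \ve$. The announced $\cB_2$ continuity bound, with the extra $\ln \ve^{-1}$ factor, should then come from a logarithmic interpolation between this $\cB_1$ Lipschitz estimate and the uniform $\cB_3$ control on $H_1 - H_2$, in the spirit of Keller--Liverani perturbation arguments; this is the most delicate technical point.

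Finally, for Gâteaux differentiability $H \in \cC^1_*(\Omega, \cB_1)$, I would differentiate the fixed-point identity at $(\nu_0, h_0)$ along a direction $(\dot \nu, \dot h) \in \bR \times \bV$, using the Fréchet derivative formulae \eqref{eq:basic_der}--\eqref{eq:basic_nuder} from Lemma~\ref{lem:L-der}. The candidate derivative is
\[
DH(\nu_0, h_0)(\dot \nu, \dot h) = (I - \cL_{T_{\nu_0, h_0}})^{-1}\big|_\bV \Big[\dot \nu \, \partial_\nu \cL_{T_{\nu_0, h_0}} + (\partial_h \cL_{T_{\nu_0, h_0}})(\dot h)\Big] H(\nu_0, h_0),
\]
which lies in $\cB_1$ because the bracketed operator maps $\cB_3 \to \cB_1$ by Lemma~\ref{lem:L-der}, $H(\nu_0, h_0) \in \cB_3$ by the first paragraph, and $(I - \cL_{T_{\nu_0, h_0}})^{-1}|_\bV$ preserves $\cB_1$. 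Existence of the limit in $\cB_1$ is controlled by the second-order remainder in the third inequality of Lemma~\ref{lem:regularity}. The main obstacle throughout is the two-derivative loss in Lemma~\ref{lem:regularity}: the regularity of $H$ can only be stated in a Banach space strictly weaker than the one $H$ itself lives in, and balancing this loss against the spectral gap is exactly what forces the scale $\cB_0 \subset \cdots \subset \cB_3$ and produces the logarithmic factor in the $\cB_2$ continuity statement.
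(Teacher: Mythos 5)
Your argument is correct, and on the key continuity estimate it takes a genuinely different route from the paper. The paper does not use the resolvent identity $H_1-H_2=(\Id-\cL_1)^{-1}|_{\bV}(\cL_1-\cL_2)H_2$; instead it writes $H_j=\cL_j^nH_j$, telescopes $\cL_1^n-\cL_2^n$ into $n$ terms each of size $O(\ve)$ (via Lemma \ref{lem:regularity} and the uniform $\cB_3$ bound), bounds $\|\cL_2^n(H_1-H_2)\|_{\cB_2}$ by $2\bar C\vartheta^n$ using (A\ref{ass:4}), and optimizes $n\sim\ln\ve^{-1}$ — which is exactly where the $\ve\ln\ve^{-1}$ comes from. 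For the differentiability the paper likewise does not differentiate the fixed-point equation directly but invokes the contour-integral representation of $\Pi_{T_{\nu,h}}$ together with \cite[Section 8]{GL06}/\cite[Theorem 3.3]{Goez10}; your direct differentiation of $H=\cL H$ is a legitimate alternative, provided you also use the continuity of $H$ in $\cB_2$ to kill the cross term $s^{-1}[\cL(u+s)-\cL(u)][H(u+s)-H(u)]$, not only the second-order remainder of Lemma \ref{lem:regularity}.

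The one step you flag as "most delicate" — recovering the $\cB_2$ bound by "logarithmic interpolation" — is in fact not needed and would be the wrong thing to chase. Assumptions (A\ref{ass:6}) and (A\ref{ass:4}) hold for all $i\in\{1,2,3\}$, so $(\Id-\cL_1)^{-1}|_{\bV}$ is uniformly bounded on $\bV\cap\cB_2$, not just on $\bV\cap\cB_1$; applying your resolvent identity there, together with $\|(\cL_1-\cL_2)H_2\|_{\cB_2}\leq C\ve\|H_2\|_{\cB_3}$ and the uniform $\cB_3$ bound on $H_2$, gives directly $\|H_1-H_2\|_{\cB_2}\leq \Const\,\ve$. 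This is a Lipschitz estimate, strictly stronger than the stated $\Const\,\ve\ln\ve^{-1}$ (which it implies for $\ve\leq e^{-1}$); the logarithm in the paper is an artifact of the telescoping argument, not an intrinsic loss. So your method, carried to its natural conclusion in $\cB_2$ rather than stopping in $\cB_1$, closes the proof cleanly.
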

\begin{proof}
By \eqref{eq:L_def}, the Lebesgue measure is a left eigenfunction of $\cL_{T_{\nu,h}}$ with eigenvalue one. Hence, $\Pi_{T_\nu,h}\phi=H(\nu,h)\int_{\bT^d} \phi$ with $\cL_{T_{\nu,h}} H(\nu,h)=H(\nu,h)$ and $\int H(\nu,h)=1$. Given a small closed curve $\gamma$ around $1$ functional calculus yields \cite{Ka80}
\[
\Pi_{T_\nu, h}=\frac{1}{2\pi i}\int_\gamma (z-\cL_{T_{\nu,h}})^{-1} dz.
\]
By Lemma \ref{lem:regularity} and \cite[Section 8]{GL06}, or see \cite[Theorem 3.3]{Goez10} for a more precise version, we have
\[
\Pi_{T_\nu, h}\in \cC_*^1(\Omega,L(\cB_3,\cB_1))\cap \cC_*^0(\Omega,L(\cB_3,\cB_2)) .
\]
Accordingly, 
\begin{equation}\label{eq:H-der}
H\in \cC_*^1(\Omega,\cB_1)\cap \cC_*^0(\Omega,\cB_2).
\end{equation}
Arguing as in the proof of Lemma \ref{lem:spectra}, (A\ref{ass:6}) implies that $\|H(\nu,h)\|_{\cB_3}\leq\bar C$ for some $\bar C>0$.  The bound on $Q_{T_{\nu, h}}$ follows by Assumption (A\ref{ass:4}).

To prove the last fact, note that for each $n\geq \bar n$,  using Assumptions (A-\ref{ass:4}), (A-\ref{ass:6}), and Lemma \ref{lem:regularity},

\[
\begin{split}
&\|H(\nu_1,h_1)-H(\nu_2,h_2)\|_{\cB_2}= \|\cL_{T_{\nu_1,h_1}}^nH(\nu_1,h_1)-\cL_{T_{\nu_2,h_2}}^nH(\nu_2,h_2)\|_{\cB_2}\\
&\le \|(\cL_{T_{\nu_1,h_1}}^n-\cL_{T_{\nu_2,h_2}}^n)H(\nu_1,h_1)\|_{\cB_2}+\|\cL_{T_{\nu_2,h_2}}^n(H(\nu_1,h_1)-H(\nu_2,h_2))\|_{\cB_2}\\
&\le\sum_{k=1}^{n-1}\|\cL_{T_{\nu_1,h_1}}^k(\cL_{T_{\nu_1,h_1}}-\cL_{T_{\nu_2,h_2}})\cL_{T_{\nu_2,h_2}}^{n-k}H(\nu_1,h_1)\|_{\cB_2}+2\bar C \vartheta^n\\
&\leq  C_* C_\star\sum_{k=1}^{n-1}\ve\|\cL_{T_{\nu_2,h_2}}^{n-k}H(\nu_1,h_1)\|_{\cB_3}+2\bar C \vartheta^n\leq \Const \ve\ln\ve^{-1}
\end{split}
\]
provided we choose $n=\max\{\bar n,\frac{2}{\ln\vartheta^{-1}}\ln\ve^{-1}\}$ and $\Const$ is large enough. 
\end{proof}
Next, we provide an estimate needed in the following. Assumptions (A\ref{ass:6}) and (A\ref{ass:4}) allow to write
\[
\begin{split}
\|(\Id-\cL_{T_{\nu,h}})^{-1}|_{\bV}\|_{\cB_1}&\leq \sum_{k=0}^{\infty}\|\cL_{T_{\nu,h}}^k|_{\bV}\|_{\cB_1}\\
&\leq\sum_{k=0}^{\bar n-1} (C_*+K)+\sum_{k=\bar n}^\infty\vartheta^n\leq \bar n(C_*+K)+(1-\vartheta)^{-1}.
\end{split}
\]
Next, note that Lemma \ref{lem:regularity} and \cite[Section 8]{GL06} implies that there exists $C_0\geq  \bar n(C_*+K)+(1-\vartheta)^{-1}$ such that, for $i\in\{0,1,2\}$.
\[
\|[(\Id-\cL_{T_{\nu,h}})^{-1}-(\Id-\cL_{T_{\nu_0,h_0}})^{-1}]\psi\|_{\cB_i}\leq C_0 (|\nu-\nu_0|+\|h-h_0\|_{\cB_0})\|\psi\|_{\cB_{i+1}}.
\]

\begin{remark}\label{rem:theta}
The above estimate and the the definition \eqref{eq:Theta}, Lemmata \ref{lem:L-der}, \ref{lem:H-prop} and Assumption (A\ref{ass:0}) imply that $\Theta\in \cC^0(\Omega,\cB_{1})\bigcup\cC_*^1(\Omega,\cB_0)$. 
\end{remark}
\begin{lemma}\label{lem:der}
$F\in\cC^0(\Omega,\cB_2)\cap \cC^1(\Omega,\cB_1)\cap\cC^2(\Omega,\cB_0)$. Also,
for each $\phi\in \bV$, the following holds:
\[
\begin{split}
&{D_{h} F}|_{\nu, h}(\phi)=\phi-\nu\Theta(\nu,h)\int\alpha \phi
\end{split}
\]
while, if $h\in\cB_2$ and $\phi,\bar\phi\in\bV$, then
\[
\begin{split}
&{D_{h}^2 F}(\phi,\bar \phi)=-\Big\{2\sum_{i,j}\left(\Id-\cL_{T_{\nu,h}}\right)^{-1} \partial_{x_i} \left[\cL_{ T_{\nu,h}}\beta_i(\Id-\cL_{T_{\nu,h}})^{-1}\partial_{x_j}\left(\cL_{T_{\nu,h}}\beta_j  H(\nu,h)\right)\right]\\
&+(\Id-\cL_{T_{\nu,h}})^{-1}\sum_{i,j} \partial_{x_i}\partial_{x_j}\left[\cL_{T_{\nu,h}} \beta_i \beta_j H(\nu,h)\right] \Big\}\nu^2\int\alpha \phi\int\alpha \bar \phi.
\end{split}
\]
Finally, if $h\in\cB_1$,
\[
{\partial_\nu F}|_{\nu, h}=-\Theta(\nu,h)\int\alpha h.
\] 
\end{lemma}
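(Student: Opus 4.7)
The strategy is to implicitly differentiate the fixed-point identity $\cL_{T_{\nu,h}} H(\nu,h)=H(\nu,h)$ (from Lemma \ref{lem:H-prop}), invert $\Id-\cL_{T_{\nu,h}}$ on $\bV$ using Assumption (A\ref{ass:4}), and then verify regularity piece-by-piece using Lemma \ref{lem:L-der}, Lemma \ref{lem:H-prop}, and the resolvent continuity estimate from Section \ref{sec:Fregularity}. For the first derivative in $h$, applying $D_h$ in a direction $\phi\in\bV$ to the fixed-point identity yields
\begin{equation*}
(\Id-\cL_{T_{\nu,h}})\,D_h H(\nu,h)(\phi)\;=\;(D_h\cL_{T_{\nu,h}})(\phi)\,H(\nu,h)\;=\;-\nu\,\divv\cL_{T_{\nu,h}}\beta H(\nu,h)\int\alpha\phi,
\end{equation*}
where the second equality is Lemma \ref{lem:L-der}. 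Both sides lie in $\bV$ ($D_h H(\phi)$ because $\int H=1$ is constant; the right-hand side because it is a divergence), so Assumption (A\ref{ass:4}) lets us invert on $\bV$ and obtain $D_h H(\phi)=\nu\Theta(\nu,h)\int\alpha\phi$, whence the stated $D_h F$. The same argument applied in the $\nu$ direction, using \eqref{eq:basic_nuder}, produces $\partial_\nu H=\Theta(\nu,h)\int\alpha h$ and hence $\partial_\nu F=-\Theta(\nu,h)\int\alpha h$.

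For the second derivative, I would differentiate $D_hH(\phi)=\nu\Theta(\nu,h)\int\alpha\phi$ once more in a direction $\bar\phi\in\bV$. Writing $\cL=\cL_{T_{\nu,h}}$ for brevity and using the resolvent identity $D_h[(\Id-\cL)^{-1}](\bar\phi)=(\Id-\cL)^{-1}(D_h\cL)(\bar\phi)(\Id-\cL)^{-1}$ together with the product rule on $\divv\cL\beta H$, three contributions to $D_h\Theta(\bar\phi)$ arise: (i) the resolvent-derivative contribution, which after substituting $(\Id-\cL)^{-1}\divv\cL\beta H=-\Theta$ produces $-\nu\int\alpha\bar\phi\,(\Id-\cL)^{-1}\divv\cL\beta\Theta$; (ii) the $D_h\cL$-contribution inside the divergence, which yields the quartic-in-$\beta$ term $-\nu\int\alpha\bar\phi\,(\Id-\cL)^{-1}\sum_{i,j}\partial_{x_i}\partial_{x_j}[\cL\beta_i\beta_j H]$; and (iii) the contribution from $D_hH(\bar\phi)=\nu\Theta\int\alpha\bar\phi$ replacing $H$ inside $\cL\beta H$, which after using $\divv$ produces a second copy of $-\nu\int\alpha\bar\phi\,(\Id-\cL)^{-1}\divv\cL\beta\Theta$. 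Contributions (i) and (iii) have the same sign and combine to the factor of $2$ in the stated formula; re-expanding $\Theta=-(\Id-\cL)^{-1}\sum_j\partial_{x_j}(\cL\beta_jH)$ in this combined term produces the nested expression $(\Id-\cL)^{-1}\partial_{x_i}[\cL\beta_i(\Id-\cL)^{-1}\partial_{x_j}(\cL\beta_jH)]$. Multiplying by $\nu\int\alpha\phi$ and flipping sign (since $F=\mathrm{id}-H$) gives the stated $D_h^2F(\phi,\bar\phi)$.

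Finally, regularity: $F\in\cC^0(\Omega,\cB_2)$ is immediate from $H\in\cC^0(\Omega,\cB_2)$ (Lemma \ref{lem:H-prop}). For Fr\'echet $\cC^1$ regularity into $\cB_1$, the explicit formula for $D_hF$ is a continuous map $\Omega\to L(\cB_1,\cB_1)$ since $\phi\mapsto\int\alpha\phi$ is bounded on $\cB_0$ by (A\ref{ass:1}) and $\Theta\in\cC^0(\Omega,\cB_1)$ by Remark \ref{rem:theta}; a parallel argument handles $\partial_\nu F$. Continuity of the G\^ateaux differentials then upgrades to Fr\'echet differentiability via \cite[Proposition 3.2.15]{DM13}, exactly as in the proof of Lemma \ref{lem:L-der}. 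A similar but more delicate argument yields $\cC^2$ regularity into $\cB_0$, using that each application of $D_h\cL_{T_{\nu,h}}$ costs two Banach levels (Lemma \ref{lem:L-der}) and the resolvent derivative costs one more; that is why the maximal regularity $H\in\cB_3$ from Lemma \ref{lem:H-prop} is precisely what is needed here. The main obstacle is this bookkeeping of Banach-space regularity, combined with the sign accounting in the product-rule expansion that produces the factor of $2$ rather than a cancellation.
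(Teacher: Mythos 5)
Your proposal is correct and follows essentially the same route as the paper: the implicit differentiation of $\cL_{T_{\nu,h}}H(\nu,h)=H(\nu,h)$ is exactly the paper's computation leading to \eqref{eq:Hderivh} (the paper merely makes explicit that the cross term $\ve^{-1}[\cL_{T_{\nu,h+\ve\phi}}-\cL_{T_{\nu,h}}][H(\nu,h+\ve\phi)-H(\nu,h)]$ vanishes via the $\ve\ln\ve^{-1}$ estimate of Lemma \ref{lem:H-prop} and the second bound of Lemma \ref{lem:regularity}), and your three-term decomposition of $D_h\Theta(\bar\phi)$, with (i) and (iii) coinciding to produce the factor $2$, is the paper's computation verbatim, as is the regularity bookkeeping. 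One small slip: contribution (ii) should read $+\nu\int\alpha\bar\phi\,(\Id-\cL)^{-1}\sum_{i,j}\partial_{x_i}\partial_{x_j}\left[\cL\beta_i\beta_jH\right]$, since the minus sign from the outer $-(\Id-\cL)^{-1}\divv$ cancels against the minus sign in $D_h\cL$; with the sign you wrote, the quartic term in $D_h^2F$ would come out opposite to the stated formula.
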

\begin{proof}
By Lemma \ref{lem:L-der} $\cL_{T_{\cdot,\cdot}}$ is Fréchect Differentiable from $\cB_{i+2}$ to $\cB_i$, $i\in\{0,1\}$. Moreover, by Lemma \ref{lem:spectra}, $H(\nu, h)\in\cB_3$ and by \eqref{eq:H-der} it is G\^ateaux differentiable from $\Omega$ to $\cB_1$. 
Hence, for each $h\in\bW$, and $\phi\in\bV$ we have\footnote{ Here the limit is meant in the $\cB_1$ topology.}
\[
\begin{split}
\lim_{\ve\to 0}& \frac{\cL_{T_{\nu,h+\ve \phi}}H(\nu,h+\ve\phi)-\cL_{T_{\nu,h}}H(\nu,h)}\ve
=\lim_{\ve\to 0} \frac{\left[\cL_{T_{\nu,h+\ve \phi}}-\cL_{T_{\nu,h}}\right]H(\nu,h)}\ve\\
&+\lim_{\ve\to 0} \frac{\left[\cL_{T_{\nu,h+\ve \phi}}-\cL_{T_{\nu,h}}\right]\left[H(\nu,h+\ve\phi)-H(\nu,h)\right]}\ve\\
&+\lim_{\ve\to 0} \frac{\cL_{T_{\nu,h}}\left[H(\nu,h+\ve\phi)-H(\nu,h)\right]}\ve\\
&=[(\partial_h\cL_{T_{\nu,h}})H(\nu,h)](\phi)+\cL_{T_{\nu,h}}[\partial_hH(\nu,h)(\phi)],
\end{split}
\]
since the term in the limit of the second line is bounded by $\ve \ln\ve^{-1}$ due to the second assertion in the statement of Lemma \ref{lem:regularity}
and the last assertion in the statement of Lemma \ref{lem:H-prop}.

This gives the G\^ateaux differential of $\cL_{T_{\nu,h}}H(\nu,h)$ with respect to $h$.
Hence,
\begin{equation}\label{eq:Hderivh}
\begin{split}
\partial_h H(\nu,h)(\phi)&=(\Id-\cL_{T_{\nu,h}})^{-1}[(\partial_h\cL_{T_{\nu,h}})H(\nu,h)](\phi)\\
&=\nu\Theta(\nu,h)\int\alpha \phi.
\end{split}
\end{equation}
This yields the formula for the G\^ateaux derivatives $\partial_h H, \partial_h F\in L(\cB_0,\cB_1)$, which, since they are continuous for $h\in\bW$ with respect to the $\cB_0$ topology (see Remark \ref{rem:theta}), are the Fréchet derivative ($D_hH$ and $D_h F$, respectively) as well. Using a similar argument to the above, we also obtain
\begin{equation}\label{eq:Hderivnu}
\partial_\nu H(\nu,h)=\Theta(\nu,h)\int\alpha h.
\end{equation}
By \eqref{eq:Hderivh} and \eqref{eq:Hderivnu} and Remark \ref{rem:theta} it follows that $DH$, as an element of $L(\cB_0,\cB_0)$, is G\^ateaux differentiable, thus (recalling Lemma \ref{lem:H-prop}) we obtain $H\in\cC^0(\Omega,\cB_2)\cap \cC^1(\Omega,\cB_1)\cap\cC^2_*(\Omega,\cB_0)$.
To conclude, we must compute $D_h^2F$. Using \eqref{eq:basic_der}, and \eqref{eq:Hderivnu}, for every $\phi,\bar\phi\in \cC^\infty$ we have
\[
\begin{split}
&{D_{h}^2 F}(\phi,\bar \phi)=-\nu \lim_{\ve\to 0}\frac{\Theta(\nu,h+\ve\bar\phi)-\Theta(\nu,h)}\ve \int\alpha \phi \\
&=\nu \Big((\Id-\cL_{T_{\nu,h}})^{-1}D_h\cL_{T_{\nu,h}}(\bar\phi)\left[ (\Id-\cL_{T_{\nu,h}})^{-1}\divv \left(\cL_{T_{\nu,h}} \beta H(\nu,h)\right)\right]\\
&\phantom{=}
+(\Id-\cL_{T_{\nu,h}})^{-1}\divv \left[D_h\cL_{T_{\nu,h}}(\bar\phi) \beta H(\nu,h)\right]\\
&\phantom{=}
+(\Id-\cL_{T_{\nu,h}})^{-1}\divv \left[\cL_{T_{\nu,h}} \beta D_h H(\nu,h)(\bar\phi)\right] \Big)\int\alpha \phi.
\end{split}
\]
Then, using \eqref{eq:Theta}, \eqref{eq:basic_der} and \eqref{eq:Hderivh}, we have
\[
\begin{split}
&{D_{h}^2 F}(\phi,\bar \phi)=-\Big\{\sum_{i,j}\left(\Id-\cL_{T_{\nu,h}}\right)^{-1} \partial_{x_i} \cL_{ T_{\nu,h}}\beta_i(\Id-\cL_{T_{\nu,h}})^{-1}\partial_{x_j}\cL_{T_{\nu,h}}\beta_j  H(\nu,h)\\
&+(\Id-\cL_{T_{\nu,h}})^{-1}\sum_{i,j} \partial_{x_i}\partial_{x_j}\left[\cL_{T_{\nu,h}} \beta_i \beta_j H(\nu,h)\right]\\
&+\sum_{ij}(\Id-\cL_{T_{\nu,h}})^{-1}\partial_{x_i} \cL_{T_{\nu,h}} \beta_i (\Id-\cL_{T_{\nu,h}})^{-1}\partial_{x_j} \cL_{T_{\nu,h}} \beta_j H(\nu,h)\Big] \Big\}\nu^2\int\alpha \phi\int\alpha \bar \phi.
\end{split}
\]
To conclude, note that, by the above formula and Assumption (A\ref{ass:0}), the continuity of $D_{h}^2 F$ in the $\cB_0$ norm follows from the continuity of $H$ in the $\cB_2$ norm, which is stated in Lemma \ref{lem:H-prop}. Hence, the claimed Fréchet differentiability.
\end{proof}

\section{Characterization of invariant measures}\label{sec:biproof}
The basic idea is to apply a quantitative version of the implicit functions theorem to $F$. See Appendix \ref{sec:implicit} for a statement and a proof adapted to our setting.

\begin{lemma}\label{lem:invertible}
For each $(\nu,h)\in\Omega$, $D_hF$ is invertible iff $ \nu\int \alpha\Theta(\nu, h)\neq 1$. Moreover, there exist $\bar C>1$ such that, setting $\delta=|\nu-\nu_0|+\|h-h_0\|_{\cB_0}$, we have
\[
\begin{split}
&\left\|\Id-D_hF(\nu_0, h_0)^{-1}D_hF(\nu, h)\right\|_{\cB_1}\leq \bar C\delta\ln\delta^{-1}\left\{\left|1-\nu_0\int \alpha\Theta(\nu_0,h_0)\right|^{-1}+1\right\}\\
&\left\|D_hF(\nu_0, h_0)^{-1}D_\nu F(\nu, h)\right\|_{\cB_1}\leq \bar C\left\{\left|1-\nu_0\int \alpha\Theta(\nu_0,h_0)\right|^{-1}+1\right\}.
\end{split}
\]
\end{lemma}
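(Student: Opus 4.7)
The starting point is the formula for $D_hF$ from Lemma \ref{lem:der}: on $\bV$,
\[
D_hF|_{\nu, h}(\phi) = \phi - \nu\,\Theta(\nu, h)\int \alpha\, \phi,
\]
which is a rank-one perturbation of the identity. Since $\Theta(\nu, h)\in\bV$ (the divergence integrates to zero on $\bT^d$ and the resolvent preserves $\bV$), applying $\int \alpha\cdot$ to the equation $D_hF(\phi) = \psi$ yields $(1-\nu\int\alpha\Theta(\nu,h))\int\alpha\phi = \int\alpha\psi$. When the scalar $1-\nu\int\alpha\Theta(\nu,h)$ is nonzero we can solve for $\int\alpha\phi$ and back-substitute, obtaining
\[
D_hF(\nu, h)^{-1}\psi = \psi + \frac{\nu\,\Theta(\nu,h)}{1 - \nu\int\alpha\Theta(\nu,h)}\int\alpha\psi;
\]
conversely, when the denominator vanishes $\Theta(\nu,h)$ is a nonzero element of $\ker D_hF$, so $D_hF$ is not invertible. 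This settles the characterization.

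For the quantitative bounds I write
\[
\Id - D_hF(\nu_0, h_0)^{-1}D_hF(\nu, h) = D_hF(\nu_0, h_0)^{-1}\bigl[D_hF(\nu_0, h_0) - D_hF(\nu, h)\bigr],
\]
and from Lemma \ref{lem:der},
\[
\bigl[D_hF(\nu_0, h_0) - D_hF(\nu, h)\bigr]\phi = \bigl[\nu\,\Theta(\nu, h) - \nu_0\,\Theta(\nu_0, h_0)\bigr]\int\alpha\phi.
\]
The key intermediate estimate is
\[
\|\nu\,\Theta(\nu, h) - \nu_0\,\Theta(\nu_0, h_0)\|_{\cB_1} \leq \Const\,\delta\ln\delta^{-1}.
\]
To prove this I use the definition \eqref{eq:Theta} of $\Theta$, add and subtract to split the difference into three pieces, and treat each as follows. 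The piece in which only the scalar prefactor $\nu$ differs is bounded by $|\nu-\nu_0|\,\|\Theta(\nu_0,h_0)\|_{\cB_1}$, which is $O(\delta)$ by Remark \ref{rem:theta}. The piece in which the two resolvents $(\Id-\cL_{T_{\cdot,\cdot}})^{-1}$ differ is handled by the estimate stated just before Remark \ref{rem:theta}, which loses one level of the scale $\cB_0\subset\cdots\subset\cB_3$ but only costs a factor $\delta$; here I use the uniform boundedness of $\divv\cL_{T_{\nu,h}}\beta H(\nu,h)$ in $\cB_2$, a consequence of the uniform $\cB_3$-bound on $H$ from Lemma \ref{lem:H-prop} together with Assumptions (A\ref{ass:0}), (A\ref{ass:1}), (A\ref{ass:6}). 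The remaining piece has matching resolvents, so it reduces to estimating $\|\divv[\cL_{T_{\nu, h}}\beta H(\nu, h) - \cL_{T_{\nu_0, h_0}}\beta H(\nu_0, h_0)]\|_{\cB_2}$; after using Lemma \ref{lem:regularity} (second bullet) and the $\cB_2$-continuity estimate for $H$ from Lemma \ref{lem:H-prop}, this produces exactly the $\delta\ln\delta^{-1}$ factor.

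Combining this continuity bound with the explicit formula for $D_hF(\nu_0, h_0)^{-1}$ derived above, whose $\cB_1$ operator norm is dominated by $1 + \nu_0\|\Theta(\nu_0,h_0)\|_{\cB_1}\|\alpha\|_\infty\,|1-\nu_0\int\alpha\Theta(\nu_0,h_0)|^{-1}$, gives the first stated inequality. The second inequality is a direct consequence of Lemma \ref{lem:der}, which yields $D_\nu F(\nu, h) = -\Theta(\nu, h)\int\alpha h$ with $\|\Theta(\nu,h)\|_{\cB_1}$ uniformly bounded on $\Omega$ by Remark \ref{rem:theta}; applying $D_hF(\nu_0,h_0)^{-1}$ produces exactly the factor $\{|1-\nu_0\int\alpha\Theta(\nu_0,h_0)|^{-1}+1\}$. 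The main obstacle is the $\cB_1$-continuity estimate on $\Theta$: both the divergence in $\partial_{x_k}\in L(\cB_{i+1},\cB_i)$ and the resolvent $(\Id-\cL)^{-1}$ cost one level in the Banach scale, so one must carefully choose at which level of the scale each estimate is applied, absorbing the two lost derivatives into the uniform $\cB_3$-control on $H$, and accepting only the logarithmic loss inherited from Lemma \ref{lem:H-prop}.
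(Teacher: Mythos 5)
Your proposal is correct and follows essentially the same route as the paper: the explicit rank-one inverse formula obtained by applying $\int\alpha\,\cdot$, the identification of $\Theta(\nu,h)$ as the kernel element when $\nu\int\alpha\Theta=1$, and the reduction of both operator bounds to the $\cB_1$-continuity of $\Theta$ via the resolvent-difference estimate preceding Remark \ref{rem:theta} and the $\ve\ln\ve^{-1}$ continuity of $H$ in $\cB_2$ from Lemma \ref{lem:H-prop}. The only cosmetic difference is that you factor $\Id-D_hF(\nu_0,h_0)^{-1}D_hF(\nu,h)$ as $D_hF(\nu_0,h_0)^{-1}[D_hF(\nu_0,h_0)-D_hF(\nu,h)]$ while the paper expands the composition directly; the two computations are algebraically identical.
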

\begin{proof}
If $ \nu\int \alpha\Theta(\nu, h)= 1$, then, using the explicit formula in Lemma \ref{lem:der}, we have $D_hF\Theta(\nu, h)=0$, hence one implication is trivial. Let us investigate the other.
 Note that the tangent space of the domain of $F$ is $\bV\times \bR$. Let $h\in\bW$, and $\psi\in \bV$, we want to find $\phi\in\bV$ such that
\begin{equation}\label{eq:solutions}
D_hF(\phi)=\phi-\nu \Theta(\nu,h)\int \alpha\phi=\psi.
\end{equation}
The above implies that $\phi-\psi=b\Theta(\nu,h)$, for some $b\in\bR$, which in turn implies
\[
b\Theta(\nu,h)=\nu\Theta(\nu,h)\int \alpha(b\Theta(\nu,h)+\psi)
\]
that is
\[
b\left( 1-\nu\int \alpha\Theta(\nu,h)\right)=\nu \int \alpha \psi
\]
and
 \[
 b=\nu \left( 1-\nu\int \alpha\Theta(\nu,h)\right)^{-1}\int\alpha \psi,
 \]
which yields the following unique solution of \eqref{eq:solutions}
\[
\phi=\frac{\nu\Theta(\nu,h)}{1-\nu\int \alpha\Theta(\nu,h)}\int \alpha\psi+\psi.
\] 
Note that, by assumptions (A\ref{ass:6}) and (A\ref{ass:4}) there exists $C>0$ such that, 
for all $\psi\in\bV$, $\|\phi\|_{\cB_1}\leq C\|\psi\|_{\cB_1}$.
Accordingly, $D_{h}F$ is invertible and
\begin{equation}\label{eq:inverse}
(D_hF)^{-1}\psi =\frac{\nu\Theta(\nu,h)}{1-\nu\int \alpha\Theta(\nu,h)}\int \alpha\psi+\psi.
\end{equation}
Thus, by Lemma \ref{lem:der} and equation \eqref{eq:inverse} we can write
\[
\begin{split}
&D_hF(\nu_0, h_0)^{-1}D_hF(\nu, h)\phi=\frac{\nu_0\Theta(\nu_0, h_0)}{1-\nu_0\int \alpha\Theta(\nu_0, h_0)}\\
&\times\int \alpha\left(\phi-\nu \Theta(\nu,h)\int \alpha\phi\right)+\phi-\nu \Theta(\nu,h)\int \alpha\phi\\
&=\frac{1-\nu\int \alpha\Theta(\nu, h)}{1-\nu_0\int \alpha\Theta(\nu_0, h_0)}\nu_0\Theta(\nu_0, h_0)\int \alpha\phi+\phi-\nu \Theta(\nu,h)\int \alpha\phi\\
&=\frac{\left[\nu_0\int \alpha\Theta(\nu_0, h_0)-\nu\int \alpha\Theta(\nu,h)\right]}{1-\nu_0\int \alpha\Theta(\nu_0,h_0)}\nu_0\Theta(\nu_0, h_0)\int \alpha\phi\\
&\phantom{=}+(\nu_0\Theta(\nu_0, h_0)-\nu\Theta(\nu,h))\int \alpha\phi+\phi.
\end{split}
\]
On the other hand, recalling \eqref{eq:Hderivh} and \eqref{eq:Hderivnu},
\begin{equation}\label{eq:deltaTheta}
\begin{split}
&\left\|\Theta(\nu_0, h_0)-\Theta(\nu,h)\right\|_{\cB_1}\leq\\
&\leq\Big\|\left[(\Id-\cL_{T_{\nu_0,h_0}})^{-1}-(\Id-\cL_{T_{\nu,h}})^{-1}\right] \divv (\beta H(\nu_0,h_{ 0}))\|_{\cB_1}\\
&+\|(\Id-\cL_{T_{\nu,h}})^{-1}\divv\left[\beta H(\nu_0,h_{ 0})-\beta H(\nu,h)\right]\Big\|_{\cB_1}\\
&\leq C_0 \|\beta\|_{\cC^{\bar r}}\Big[\|H(\nu_0,h_{ 0})\|_{\cB_3}\left(|\nu-\nu_0|+\|h-h_0\|_{\cB_0}\right)\\
&\phantom{\leq\;}+\|H(\nu_0,h_{ 0})-H(\nu,h)\|_{\cB_2}\Big].
\end{split}
\end{equation}
Hence, the first inequality of the Lemma follows by Lemma \ref{lem:H-prop}.

The second inequality follows by equations \eqref{eq:inverse} and Lemma \ref{lem:der},
\begin{equation}\label{eq:der-h-prelim}
\begin{split}
D_hF(\nu_0, h_0)^{-1}D_\nu F(\nu, h)=& 
\left[\frac{\nu_0\Theta(\nu_0,h_0)\int \alpha \Theta(\nu,h)}{1-\nu_0\int\alpha\Theta(\nu_0,h_0)}
+\Theta(\nu,h)\right]\\
&\times\int \alpha h
\end{split}
\end{equation}
from which the lemma follows.
\end{proof}
\subsection{Local existence of invariant measures}\label{sec:inv_meas}\ \\
To continue, we need a few definitions. Let $\Omega_1$ be the space $[0,\nu_*]\times\bW$, where $\bW$ is endowed with the $\cB_1$ topology. Note that Lemma \ref{lem:der} implies a fortiori that $F\in\cC^1(\Omega_1,\cB_1)$.
In the following we will consider $F\in\cC^1(\Omega_1,\cB_1)$ unless otherwise stated.
Our first result is the following.
\begin{lemma}\label{lem:noturn} 
If $\nu_0\in [0,\nu_*)$, $h_0\in\bW$, $F(\nu_0,h_0)=0$ and $\nu_0\int \alpha\Theta(\nu_0, h_0)\neq~1$, then, setting
\[
\begin{split}
&\delta = \frac{1}{4 \bar C^2\left\{\left|1-\nu_0\int \alpha\Theta(\nu_0,h_0)\right|^{-1}+1\right\}^2}\\
&\delta_1 = \frac{1}{8 \bar C^3\left\{\left|1-\nu_0\int \alpha\Theta(\nu_0,h_0)\right|^{-1}+1\right\}^3},
\end{split}
\]
there exists $\bh\in\cC^1((\nu_0-\delta_1,\nu_0+\delta_1)\cap[0,\nu_*), \bW)$, $\bh (\nu_0)=h_0$, such that all the solutions of $F(\nu, h)=0$ in the set 
\[
\{(\nu,h)\in\bR\times \bW\;:\;  |\nu-\nu_0|\leq \delta_1, \|h-h_0\|_{ \cB_1}\leq \delta\}
\]
belong to the set  $\{\bh(s)\}_{s\in (-\delta_1,\delta_1)}$. Moreover,
\[
\bh'(s)=-\left[\frac{s\Theta(s,\bh(s))\int \alpha \Theta(s,\bh(s))}{1-s\int\alpha\Theta(s,\bh(s))}
+\Theta(s,\bh(s))\right]\int \alpha \bh(s).
\]
\end{lemma}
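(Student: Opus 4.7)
The plan is to apply the quantitative implicit function theorem of Appendix \ref{sec:implicit} to $F\in\cC^1(\Omega_1,\cB_1)$ at the point $(\nu_0,h_0)$. By Lemma \ref{lem:invertible}, the hypothesis $\nu_0\int\alpha\Theta(\nu_0,h_0)\neq 1$ is exactly the condition that $D_hF(\nu_0,h_0):\bV\to\bV$ is invertible, and Lemma \ref{lem:der} supplies the required $\cC^1$ regularity of $F$ in the $\cB_1$ topology. Setting $M:=|1-\nu_0\int\alpha\Theta(\nu_0,h_0)|^{-1}+1$, Lemma \ref{lem:invertible} furnishes the two quantitative estimates needed to feed into the IFT: $\|I-D_hF(\nu_0,h_0)^{-1}D_hF(\nu,h)\|_{\cB_1}\leq \bar C M\,\delta\ln\delta^{-1}$ on the $\delta$-ball in $\cB_1$, and $\|D_hF(\nu_0,h_0)^{-1}D_\nu F(\nu,h)\|_{\cB_1}\leq \bar C M$.

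The chosen values $\delta=(4\bar C^2 M^2)^{-1}$ and $\delta_1=(8\bar C^3 M^3)^{-1}$ are calibrated so that the first quantity above stays below $1/2$ throughout the box (so that $D_hF(\nu,h)$ is itself invertible by Neumann series, with norm control independent of the point in the box), while $\delta_1\cdot \bar C M\leq \delta/2$ guarantees that the Banach contraction underlying the IFT keeps its iterates inside $\{\|h-h_0\|_{\cB_1}\leq\delta\}$ for every $|\nu-\nu_0|\leq\delta_1$. Under these hypotheses the quantitative IFT produces a unique $\cC^1$ curve $\bh:(\nu_0-\delta_1,\nu_0+\delta_1)\cap[0,\nu_*)\to\bW$ with $\bh(\nu_0)=h_0$ and $F(s,\bh(s))=0$, and it asserts that every solution of $F(\nu,h)=0$ inside $\{|\nu-\nu_0|\leq\delta_1,\,\|h-h_0\|_{\cB_1}\leq\delta\}$ lies on this curve.

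The formula for $\bh'(s)$ then follows by differentiating $F(s,\bh(s))=0$ implicitly: $\bh'(s)=-D_hF(s,\bh(s))^{-1}\partial_\nu F(s,\bh(s))$. Substituting the explicit inverse formula \eqref{eq:inverse} together with $\partial_\nu F=-\Theta(s,\bh(s))\int\alpha\,\bh(s)$ from Lemma \ref{lem:der}, and repeating the algebraic simplification already performed in \eqref{eq:der-h-prelim} (now at $(s,\bh(s))$ in place of $(\nu_0,h_0)$), yields exactly the stated expression for $\bh'(s)$.

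The only delicate point is the logarithmic factor $\delta\ln\delta^{-1}$ in Lemma \ref{lem:invertible} rather than a clean linear modulus of continuity. This forces the calibration of $\delta$ to be slightly stricter than in the textbook IFT, which is the reason the constants in the statement involve the powers $M^2$ and $M^3$; correspondingly, the quantitative IFT of Appendix \ref{sec:implicit} must be formulated with enough flexibility to accept such a sub-linear modulus (the factor $\ln\delta^{-1}$ is absorbed into $\bar C$ at the price of the mentioned powers of $M$). Once this is arranged, the remaining verification is a direct substitution.
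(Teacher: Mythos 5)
Your proposal is correct and follows essentially the same route as the paper: verify the hypotheses of the quantitative implicit function theorem of Appendix \ref{sec:implicit} via the two estimates of Lemma \ref{lem:invertible}, with the stated $\delta,\delta_1$ calibrated precisely so that $\bar C M\,\delta\ln\delta^{-1}\leq \bar C M\sqrt{\delta}=1/2$ (the paper handles the logarithm exactly by the elementary inequality $x\ln x^{-1}\leq\sqrt x$, which is why the powers $M^2,M^3$ appear, rather than by reformulating the IFT for a sub-linear modulus) and $\delta_1=\delta/(2\Upsilon)$ with $\Upsilon=\bar C M$. The derivative formula is obtained, as you say, from $\bh'=-(D_hF)^{-1}D_\nu F$ together with \eqref{eq:inverse} and \eqref{eq:der-h-prelim}.
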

\begin{proof}
By Lemma \ref{lem:invertible}, if $\nu_0\int \alpha\Theta(\nu_0, h_0)\neq 1$, we can apply Theorem \ref{thm:implicit-func} whose hypotheses are satisfied with the claimed $\delta,\delta_1$.\footnote{We use the inequality $x\ln x^{-1}\leq \sqrt x$.} 

To conclude, note that, by Theorem \ref{thm:implicit-func} and equation \eqref{eq:der-h-prelim},
\[
\bh'(\nu)=-(D_h F)^{-1}D_\nu F=
-\left[\frac{\nu\Theta(\nu,\bh)\int \alpha \Theta(\nu,\bh)}{1-\nu\int\alpha\Theta(\nu,\bh)}
+\Theta(\nu,\bh)\right]\int \alpha \bh.
\]
\end{proof}
We have thus obtained the proof of Theorem \ref{thm:implicit} in the invertible case with $(G(\tau), \bh(\tau))=(\tau,\bh(\tau))$.\\
It remains to analyze what happens if  $\nu_{0}\int\alpha\Theta(h_0,\nu_{0})= 1$. 
To this end, it is convenient to define $Z\in\cB_1'$ by
\begin{equation}\label{eq:Zeta}
Z(\phi)=\int\alpha\phi.
\end{equation}
Note that, for all $\phi\in\bV$, using Lemma \ref{lem:der}, we have
\begin{equation}\label{eq:functional}
Z(D_{h_0}F(\phi))=\int \alpha \left[\phi-\nu_0\Theta(\nu_0,h_0)\int \alpha\phi\right]=0.
\end{equation}
Next, we define the change of variables $\Lambda : \bR\times \bW\to \bR\times \bW$
defined as
\begin{equation}\label{eq:coord-change}
\Lambda(\tau,\zeta)=( Z(\zeta),\zeta-\tau \nu_0\Theta(\nu_0,h_0)).
\end{equation}
One can check directly that $\Lambda$ is invertible with inverse given by
\[
\Lambda^{-1}(\nu,h)=(\nu-Z(h), h-(Z(h)-\nu)\nu_0\Theta(\nu_0,h_0)).
\]
Indeed,
\begin{equation*}
\begin{split}
&\Lambda\circ\Lambda^{-1}(\nu,h)\\
&=(Z(h)-(Z(h)-\nu)\nu_0Z(\Theta(\nu_0,h_0)),\\
&\phantom{=\;\;}h-(Z(h)-\nu)\nu_0\Theta(\nu_0,h_0)-(\nu-Z(h))\nu_0\Theta(\nu_0,h_0))=(\nu,h).
\end{split}
\end{equation*}
Let $(\tau,\zeta)=\Lambda^{-1}(\nu,h)$.
Clearly, the equation $F(\nu, h)=0$ is equivalent to
\begin{equation}\label{eq:implicitmod}
\widetilde F(\tau,\zeta):= F\circ\Lambda(\tau,\zeta)=0.
\end{equation}
Let $(\tau_0,\zeta_0)=\Lambda^{-1}(\nu_0,h_0)$.
The point of this change of variable rests in the following result.
\begin{lemma}\label{lem:invertible2}
If $\nu_{0}\int \alpha\Theta(\nu_0, h_0)= 1$, then, under the hypothesis of Theorem \ref{thm:implicit}, $D_\zeta\widetilde F(\tau_0,\zeta_0)$ is invertible. Moreover, there exist $\bar C>1$ such that, setting $\delta=|\nu-\nu_0|+\|h-h_0\|_{\cB_0}$,
\[
\begin{split}
&\left\|\Id-D_\zeta \widetilde F(0, \zeta_0)^{-1}D_\zeta\widetilde F(\tau, \zeta)\right\|_{\cB_1}\leq \bar C\left[1+\frac{1}{\left|\int\alpha h_0\right|}\right]\delta\ln\delta^{-1}\\
&\left\|D_\zeta \widetilde F(0, \zeta_0)^{-1}D_\tau \widetilde F(\tau, \zeta)\right\|_{\cB_1}\leq \bar C \left[1+\frac{1}{\left|\int\alpha h_0\right|}\right]\delta\ln\delta^{-1} .
\end{split}
\]
\end{lemma}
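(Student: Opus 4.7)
My plan is: (a) compute $D_\zeta\widetilde F$ and $D_\tau\widetilde F$ via the chain rule applied to $\widetilde F = F\circ\Lambda$; (b) use the degeneracy $\nu_0 Z(\Theta(\nu_0,h_0))=1$ (with $Z$ as in \eqref{eq:Zeta}) together with (H3) to invert $D_\zeta\widetilde F(\tau_0,\zeta_0)$ explicitly, while noting that the same degeneracy forces $D_\tau\widetilde F(\tau_0,\zeta_0)=0$; (c) obtain the two Lipschitz bounds from the continuity estimates \eqref{eq:deltaTheta} and Lemma~\ref{lem:H-prop}.

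For step (a), since $D_\zeta\Lambda(\psi) = (Z(\psi),\psi)$ and $D_\tau\Lambda = (0,-\nu_0\Theta(\nu_0,h_0))$, the chain rule combined with the formulas of Lemma~\ref{lem:der} gives, for $\psi\in\bV$,
\begin{align*}
D_\zeta\widetilde F(\tau,\zeta)(\psi) &= Z(\psi)\,\partial_\nu F(\Lambda(\tau,\zeta)) + D_h F(\Lambda(\tau,\zeta))(\psi),\\
D_\tau\widetilde F(\tau,\zeta) &= -\nu_0\, D_h F(\Lambda(\tau,\zeta))\bigl(\Theta(\nu_0,h_0)\bigr).
\end{align*}
Specialising to $(\tau_0,\zeta_0)$, where $\Lambda(\tau_0,\zeta_0)=(\nu_0,h_0)$, substituting the explicit expressions for $\partial_\nu F$ and $D_h F$ from Lemma~\ref{lem:der}, and using $\nu_0 Z(\Theta(\nu_0,h_0))=1$, one reads off
\[
D_\zeta\widetilde F(\tau_0,\zeta_0)(\psi) = \psi - \bigl[Z(h_0)+\nu_0\bigr]Z(\psi)\,\Theta(\nu_0,h_0),
\]
while $D_\tau\widetilde F(\tau_0,\zeta_0) = -\nu_0\bigl(\Theta(\nu_0,h_0) - \nu_0 Z(\Theta(\nu_0,h_0))\,\Theta(\nu_0,h_0)\bigr)=0$. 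This vanishing is the source of the smallness on the right-hand side in the second claimed bound.

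For step (b), applying $Z$ to the equation $D_\zeta\widetilde F(\tau_0,\zeta_0)(\psi)=g$ and using once more $\nu_0 Z(\Theta(\nu_0,h_0))=1$ reduces matters to the scalar equation $-Z(h_0)Z(\psi)/\nu_0 = Z(g)$, which is uniquely solvable by (H3). Back-substitution gives an explicit formula for $D_\zeta\widetilde F(\tau_0,\zeta_0)^{-1}$ as a rank-one perturbation of the identity, whose $\cB_1$ operator norm is bounded by a constant times $1+1/|\int\alpha h_0|$; this accounts for the prefactor appearing in both inequalities.

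For step (c), writing $D_\zeta\widetilde F(\tau,\zeta)-D_\zeta\widetilde F(\tau_0,\zeta_0) = Z(\cdot)[\partial_\nu F(\nu,h)-\partial_\nu F(\nu_0,h_0)] + [D_hF(\nu,h)-D_hF(\nu_0,h_0)]$ with $(\nu,h)=\Lambda(\tau,\zeta)$, I estimate each bracket by combining the explicit formulas of Lemma~\ref{lem:der} with \eqref{eq:deltaTheta} and the modulus-of-continuity bound for $H$ in the $\cB_2$ norm from Lemma~\ref{lem:H-prop}. This yields a bound of order $\delta\ln\delta^{-1}$; composing with the inverse from step (b) produces the first claimed inequality. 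For the second, the cancellation $D_\tau\widetilde F(\tau_0,\zeta_0)=0$ reduces the task to estimating $\nu\Theta(\nu,h)-\nu_0\Theta(\nu_0,h_0)$ in $\cB_1$, again of order $\delta\ln\delta^{-1}$ by the same ingredients. The main technical point I expect to watch is the Banach-space bookkeeping in \eqref{eq:deltaTheta}: the logarithmic loss is forced precisely because $H$ is only continuous in $\cB_2$ and not in $\cB_1$, and this loss must be propagated cleanly through the chain-rule identities without worsening the exponent.
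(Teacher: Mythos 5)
Your plan is correct and follows essentially the same route as the paper: the same chain-rule computation yielding $D_\zeta\widetilde F=\Id-[Z(h)+\nu]\,\Theta(\nu,h)\otimes Z$, the same explicit inversion by applying $Z$ and invoking (H3), and the same use of \eqref{eq:deltaTheta} and Lemma~\ref{lem:H-prop} for the $\delta\ln\delta^{-1}$ bounds. Your observation that $D_\tau\widetilde F(\tau_0,\zeta_0)=0$ is exactly the paper's identity $D_\tau\widetilde F(\tau,\zeta)=\nu\Theta(\nu,h)-\nu_0\Theta(\nu_0,h_0)$ evaluated at the base point, so nothing is missing.
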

\begin{proof}

First of all note 
\begin{equation}\label{eq:kernel0}
D_{h}F(\Theta(\nu_0,h_0))=0
\end{equation}
so $D_{h}F(\nu_0,h_0)$ has a kernel. In addition, equation \eqref{eq:functional} shows that the range of $D_{h}F(\nu_0,h_0)$ has, at least, codimension one. 
 
Next, we compute the derivative of $\widetilde F$. Recalling  Lemma \ref{lem:der},
\begin{equation}\label{eq:derim}
\begin{split}
D_{\zeta}\widetilde F&=(D_{h} F)\circ \Lambda+(D_{\nu} F)\circ \Lambda\otimes Z\\
&= \Id-\left[Z(h)+\nu\right]\Theta(\nu,h)\otimes Z.
\end{split}
\end{equation}
Let us show that $D_{\zeta}\widetilde F$ is invertible at $(\nu_0,h_0)$. 
For each $\psi\in \bV$ we must consider the equation
\[
\psi=D_{\zeta}\widetilde F(\phi)=\phi-\left[Z(h_0)+\nu_0\right]\Theta(\nu_0,h_0) Z(\phi).
\]
Applying $Z$ to the above equation and arguing as in \eqref{eq:functional}, we have
\[
Z(\phi)=\left\{1-\left[Z(h_0)+\nu_0\right]Z(\Theta(\nu_0,h_0))\right\}^{-1}Z(\psi)=-\nu_0\left\{Z(h_0)\right\}^{-1}Z(\psi).
\]
Note that $Z(h_0)\neq 0$ by hypothesis \eqref{eq:range}. Thus,
\begin{equation}\label{eq:tildeFinv}
\phi=\psi- \frac{\nu_0 \left[Z(h_0)+\nu_0\right]}{Z(h_0)}\Theta(\nu_0,h_0) Z(\psi)=\left[D_\zeta \widetilde F(\nu_0, h_0)\right]^{-1}\hskip-6pt(\psi).
\end{equation}
Accordingly, by the bounded inverse Theorem, $D_{\zeta}\widetilde F$ is invertible. 

To prove the last inequality of the statement of the Lemma, we recall Lemma \ref{lem:der} and we write
\[
\begin{split}
&D_\zeta \widetilde F(0, \zeta_0)^{-1}D_\zeta\widetilde F(\tau, \zeta)(\phi)=\phi-\left[Z(h)+\nu\right]\Theta(\nu,h) Z(\phi)\\
&\phantom{=}- \frac{\nu_0 \left[Z(h_0)+\nu_0\right]}{Z(h_0)}\Theta(\nu_0,h_0) \left\{1-\left[Z(h)+\nu\right]Z(\Theta(\nu,h))\right\} Z(\phi)\\
&=\phi-\left[Z(h)-Z(h_0)+\nu-\nu_0\right]\Theta(\nu,h) Z(\phi)\\
&\phantom{=}
-\left[Z(h_0)+\nu_0\right]\left\{\Theta(\nu,h)-\Theta(\nu_0,h_0)\right\} Z(\phi)\\
&\phantom{=}+\frac{\nu_0 \left[Z(h_0)+\nu_0\right]}{Z(h_0)}\Theta(\nu_0,h_0)\left\{Z(h_0)Z(\Theta(\nu_0,h_0))-Z(h)Z(\Theta(\nu,h))\right\} Z(\phi)\\
&\phantom{=}-\frac{\nu_0 \left[Z(h_0)+\nu_0\right]}{Z(h_0)}\Theta(\nu_0,h_0)\left\{\nu_0 Z(\Theta(\nu_0,h_0))-\nu Z(\Theta(\nu,h))\right\} Z(\phi).
\end{split}
\]
The first inequality of the Lemma follows by \eqref{eq:deltaTheta} and Lemma \ref{lem:H-prop}.
Let us prove the second. Note that
\[
\begin{split}
D_\tau \widetilde F(\tau, \zeta)&=-\nu_0D_h F(\nu,h)\Theta(\nu_0,h_0)\\
&=-\nu_0\Theta(\nu_0,h_0)+\nu_0\nu\Theta(\nu,h)Z(\Theta(\nu_0,h_0))\\
&= \nu\Theta(\nu,h)-\nu_0\Theta(\nu_0,h_0).
\end{split}
\]
Hence, recalling equations \eqref{eq:tildeFinv}, \eqref{eq:deltaTheta} and Lemma \ref{lem:H-prop},
\begin{equation}\label{eq:wFtau}
\begin{split}
D_\zeta\widetilde F(0, \zeta_0)^{-1}&D_\tau \widetilde F(\tau, \zeta)=D_\tau \widetilde F(\tau, \zeta)\\
&-\frac{\nu_0\left[Z(h_0)+\nu_0\right]}{Z(h_0)}\Theta(\nu_0,h_0)Z(D_\tau \widetilde F(\tau, \zeta))\\
=&\nu\Theta(\nu,h)-\nu_0\Theta(\nu_0,h_0)\\
&-\frac{\nu_0\left[Z(h_0)+\nu_0\right]}{Z(h_0)}\Theta(\nu_0,h_0)Z(\nu\Theta(\nu,h)-\nu_0\Theta(\nu_0,h_0)).
\end{split}
\end{equation}
The Lemma follows by using \eqref{eq:deltaTheta} and Lemma \ref{lem:H-prop}.
\end{proof}
\begin{lemma}\label{lem:turn}
If $\nu_0\in (0,\nu_*)$, $h_0\in\bW$, $F(\nu_0,h_0)=0$, $\nu_0\int \alpha\Theta(\nu_0, h_0)=~{\!\!1}$, assumption \eqref{eq:range} holds, and $\int \alpha h_0\not=0$ then there exist $\delta,\delta_1>0$, with $\delta_1\leq \nu_0^{-1}\|\Theta(\nu_0,h_0)\|_{\cB_1}^{-1}\delta$, and a differentiable map $(G,\bh)=:(-\delta_1,\delta_1)\to [0,\nu_*]\times \bW$, $(G(0),\bh (0))=(\nu_0,h_0)$, such that all the solutions of $F(\nu, h)=0$ in the set 
\[
\{(\nu,h)\in\bR\times \bW\;:\;  |\nu-\nu_0|\leq \delta_1, \|h-h_0\|_{ \cB_1}\leq \delta\}
\]
belong to the set  $\{G(\tau),\bh(\tau)\}_{\tau\in(-\delta_1,\delta_1)}$.\\
If $Z(D_h^2|_{(\nu_0,h_0)}F(\Theta(\nu_0,h_0), \Theta(\nu_0,h_0))\neq 0$, then, setting $\nu(\tau)=G(\tau)$,
 \[
 \begin{split}
 &\nu'(\tau_0)=0\\
 &\nu''(\tau_0)=\frac{ \nu_0^3Z\left(D_h^2F(\Theta(\nu_0,h_0),\Theta(\nu_0,h_0))\right)}{\int\alpha h_0}\neq 0.
 \end{split}
 \]
\end{lemma}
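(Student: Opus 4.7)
The strategy is to apply the quantitative implicit function theorem (Theorem \ref{thm:implicit-func}) not to $F$ directly, but to the transformed equation $\widetilde F(\tau, \zeta) = F \circ \Lambda(\tau, \zeta) = 0$. Lemma \ref{lem:invertible2} is designed precisely for this: in the singular case $\nu_0 \int \alpha \Theta(\nu_0, h_0) = 1$, although $D_h F$ has the nontrivial kernel spanned by $\Theta(\nu_0, h_0)$, the change of variables \eqref{eq:coord-change} repairs this, making $D_\zeta \widetilde F(\tau_0, \zeta_0)$ invertible with the perturbation estimates controlled by $1 + |\int \alpha h_0|^{-1}$. I therefore obtain a $\cC^1$ curve $\tau \mapsto \zeta(\tau)$ defined on some interval $(\tau_0 - \delta_2, \tau_0 + \delta_2)$ with $\zeta(\tau_0) = \zeta_0$ and $\widetilde F(\tau, \zeta(\tau)) = 0$, capturing every $\cB_1$-nearby zero. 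Setting $(G(\tau), \bh(\tau)) := \Lambda(\tau, \zeta(\tau))$ yields a curve of zeros of $F$. The size constraint $\delta_1 \leq \nu_0^{-1}\|\Theta(\nu_0, h_0)\|_{\cB_1}^{-1}\delta$ is exactly what is needed so that the inverse image under $\Lambda$ of a $(\delta_1,\delta)$-box sits inside the neighborhood where the implicit function applies; this follows from the linear formula $\bh = \zeta - \tau \nu_0 \Theta(\nu_0,h_0)$.

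To extract the derivative identities, I would differentiate $\widetilde F(\tau, \zeta(\tau)) = 0$ and use the expression
\[
D_\tau \widetilde F(\tau, \zeta) = \nu \Theta(\nu, h) - \nu_0 \Theta(\nu_0, h_0), \qquad (\nu,h) = \Lambda(\tau, \zeta),
\]
already derived inside the proof of Lemma \ref{lem:invertible2}. Evaluated at $(\tau_0, \zeta_0)$ this vanishes, so $\zeta'(\tau_0) = -[D_\zeta \widetilde F]^{-1} D_\tau \widetilde F = 0$, and since the first component of $\Lambda$ is $Z(\zeta)$, $\nu'(\tau_0) = G'(\tau_0) = Z(\zeta'(\tau_0)) = 0$. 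Differentiating once more and using $\zeta'(\tau_0) = 0$ reduces the second derivative to $\zeta''(\tau_0) = -[D_\zeta \widetilde F(\tau_0,\zeta_0)]^{-1} D^2_{\tau\tau} \widetilde F(\tau_0,\zeta_0)$. Since under $\Lambda$, with $\zeta$ fixed, $\nu = Z(\zeta)$ does not depend on $\tau$ and $dh/d\tau = -\nu_0 \Theta(\nu_0, h_0)$, the chain rule gives
\[
D^2_{\tau\tau} \widetilde F(\tau_0, \zeta_0) = -\nu_0^{2}\, D_h \Theta(\nu_0, h_0)\bigl(\Theta(\nu_0, h_0)\bigr).
\]

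The real bookkeeping step is to re-express $D_h\Theta(\Theta)$ in terms of the invariant quantity $D_h^2 F(\Theta, \Theta)$ appearing in hypothesis \eqref{eq:seconder}. From Lemma \ref{lem:der} the factorization $D_h^2 F(\phi, \bar\phi) = -\nu (D_h\Theta)(\bar\phi) Z(\phi)$ holds, while the explicit bilinear expression for $D_h^2F$ shows the symmetric form $D_h^2 F(\phi, \bar\phi) = K\, Z(\phi) Z(\bar\phi)$ for some $K \in \bV$. Using the resonance condition $Z(\Theta(\nu_0, h_0)) = 1/\nu_0$, comparison of the two representations at $\bar\phi = \Theta$ and at $\phi = \bar\phi = \Theta$ yields $K = \nu_0^2 D_h^2 F(\Theta, \Theta)$ and hence $D_h\Theta(\Theta) = -D_h^2 F(\Theta,\Theta)$. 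Therefore
\[
D^2_{\tau\tau} \widetilde F(\tau_0, \zeta_0) = \nu_0^{2}\, D_h^2 F(\Theta(\nu_0,h_0),\Theta(\nu_0,h_0)).
\]
Applying $Z$ to the explicit inverse \eqref{eq:tildeFinv} and using once more $Z(\Theta) = 1/\nu_0$ gives $Z([D_\zeta \widetilde F]^{-1}\psi) = -\nu_0 Z(\psi)/Z(h_0)$, and assembling these ingredients produces
\[
\nu''(\tau_0) = Z(\zeta''(\tau_0)) = \frac{\nu_0^{3}\, Z\!\bigl(D_h^2 F(\Theta(\nu_0,h_0), \Theta(\nu_0,h_0))\bigr)}{\int \alpha h_0},
\]
which is nonzero by hypothesis \eqref{eq:seconder} together with $\int \alpha h_0 \neq 0$.

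The main obstacle is not the implicit function theorem step (Lemma \ref{lem:invertible2} hands us invertibility of $D_\zeta\widetilde F$ on a silver platter), but the algebraic identification $D_h\Theta(\Theta) = -D_h^2 F(\Theta,\Theta)$: one must exploit both the factorized form of $D_h^2 F$ from Lemma \ref{lem:der} and the resonance normalization $\nu_0 Z(\Theta) = 1$ to convert the chain-rule output $-\nu_0^2 D_h\Theta(\Theta)$ into the intrinsic quantity appearing in the statement. Everything else is routine differentiation of the curve $\tau \mapsto \zeta(\tau)$ and tracking the $Z$-components through $\Lambda$.
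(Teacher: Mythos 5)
Your proposal is correct and follows the paper's strategy for the existence part exactly: change variables via $\Lambda$, invoke Lemma \ref{lem:invertible2} for the invertibility of $D_\zeta\widetilde F$, apply Theorem \ref{thm:implicit-func}, and push the curve back through $\Lambda$; your justification of the constraint $\delta_1\leq \nu_0^{-1}\|\Theta(\nu_0,h_0)\|_{\cB_1}^{-1}\delta$ is the same as the paper's. For the derivative formulas you take a slightly different, but equivalent and correct, route: you differentiate $\widetilde F(\tau,\zeta(\tau))=0$ twice in the transformed coordinates, compute $D^2_{\tau\tau}\widetilde F=-\nu_0^2 D_h\Theta(\Theta)$, establish the identity $D_h\Theta(\Theta)=-D_h^2F(\Theta,\Theta)$ by comparing the factorized and symmetric forms of $D_h^2F$ under the normalization $\nu_0Z(\Theta)=1$, and then apply $Z$ to the explicit inverse \eqref{eq:tildeFinv}. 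The paper instead differentiates the original identity $F(G(\tau),\bh(\tau))=0$ and applies $Z$ directly, using $Z\circ D_{h_0}F=0$ (equation \eqref{eq:functional}) to annihilate the unknown terms $D_hF\,\bh''$; this avoids both the explicit inverse and your algebraic identity, at the cost of first having to verify $\zeta'(\tau_0)=0$ and $\bh'(\tau_0)=-\nu_0\Theta(\nu_0,h_0)$. Both computations land on the same value of $\nu''(\tau_0)$, and I see no gap in yours.
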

\begin{proof}
 We use the coordinates \eqref{eq:coord-change}. Note that, by the definition \eqref{eq:coord-change}, $\Lambda\in\cC^1(\Omega_1,\Omega_1)$, so $\widetilde F\in\cC^1(\Omega_1,\cB_1)$.\footnote{ See the beginning of Section \ref{sec:inv_meas} for the definition of $\Omega_1$.}
Note that there exists $C_0>0$ such that, if $\|\zeta-\zeta_0\|_{\cB_1}+\nu_0\|\Theta(\nu_0,h_0)\|_{\cB_1}|\tau|\leq \delta_0$, then $\|h-h_0\|_{\cB_1}\leq C_0\delta_0$ and $|\nu-\nu_0|\le C_0\delta_0$.
By Lemma \ref{lem:invertible2} we can then choose $\delta_0$ so that 
\[
\left\|\Id-D_\zeta \widetilde F(0, \zeta_0)^{-1}D_\zeta\widetilde F(\tau, \zeta)\right\|_{\cB_1}\leq\frac 12,
\]
and
\[
\left\|D_\zeta \widetilde F(0, \zeta_0)^{-1}D_\tau \widetilde F(\tau, \zeta)\right\|_{\cB_1}\leq\frac12.
\]
Hence, the Implicit Function Theorem \ref{thm:implicit-func} yields a differentiable function $\zeta(\tau)$ such that $\widetilde F(\tau,\zeta(\tau))=0$ for all $|\tau|\leq\frac{\delta_0}{\nu_0\|\Theta(\nu_0,h_0)\|_{\cB_1}}$. The first assertion of the Lemma follows with $(G(\tau),\bh(\tau))=\Lambda(\tau,\zeta(\tau))$. Hence,
\begin{equation}\label{eq:derone}
D_hF (h')+D_\nu F\nu'=D_hF (\zeta'-\nu_0\Theta(\nu_0,h_0))+D_\nu F\nu'=0
\end{equation}
which, applying $Z$ and recalling \eqref{eq:functional} and since assumption \eqref{eq:range} implies $Z(D_{\nu} F)(\nu_0,h_0)\neq 0$, gives $\nu'(\tau_0)=0$. Note that $\nu'(\tau_0)=Z(\zeta'(\tau_0))$, hence 
\[
\begin{split}
0=&D_{h}{ F}  (\zeta'(\tau_0)- \nu_0\Theta(\nu_0,h_0))\\
=& \zeta'(\tau_0)-\nu_0\Theta(\nu_0,h_0)-\nu_0\Theta(\nu_0,h_0)Z( \zeta'- \nu_0\Theta(\nu_0,h_0))\\
=&\zeta'(\tau_0)-\nu_0\Theta(\nu_0,h_0)+\nu_0^{2}\Theta(\nu_0,h_0)Z(\Theta(\nu_0,h_0))=\zeta'(\tau_0).
\end{split}
\]
Accordingly, 
\begin{equation}\label{eq:h-der}
\bh'(\tau_0)=\zeta'(\tau_0)- \nu_0\Theta(\nu_0,h_0)=-\nu_0\Theta(\nu_0,h_0).
\end{equation}
Next, we can differentiate \eqref{eq:derone} considering $F\in\cC^1(\Omega, \cB_0)$ and obtain
\[
D_h^2F(\bh',\bh')+2D^2_{\nu, h}F \bh'\nu'+D_hF\bh''+D_\nu^2F(\nu')^2+D_\nu F\nu''=0.
\]
Setting $\tau=\tau_0$ and applying $Z$ yields
\[
\begin{split}
0&=\nu_0^2Z\left(D_h^2F(\Theta(\nu_0,h_0),\Theta(\nu_0,h_0))\right)+Z(D_\nu F(\nu_0,h_0))\nu''(\tau_0)\\
&=\nu_0^2Z\left(D_h^2F(\Theta(\nu_0,h_0),\Theta(\nu_0,h_0))\right)-\nu_0^{-1}\int\alpha h_0\nu''(\tau_0),
\end{split}
\]
which, by hypothesis, implies $\nu''(\tau_0)\neq 0$. 
\end{proof}
\begin{proof}[\bf\em Proof of Theorem \ref{thm:implicit}]
The existence of the functions $(G,\bh)$ follows immediately from Lemmata \ref{lem:noturn} and \ref{lem:turn}.
If $\nu_0\int \alpha\Theta(\nu_0, \bh(\nu_0))= 1$, then by Lemma \ref{lem:turn} we have $G(0)=\nu_0$, $G'(0)=0$ and $G''(0)\neq 0$, hence for each $\tau\in (-\delta,\delta)$, with $\delta$ small enough, we have that there exists $\tau_1<0$ such that $\nu=G(\tau)=G(\tau_1)$ while, by \eqref{eq:h-der},
\[
\bh(\tau)-\bh(\tau_1)=\int_{\tau_1}^\tau\bh'(s)ds=(\tau_1-\tau) \Theta(\nu_0,h_0)+\cO((\tau-\tau_1)^2),
\]
where we have used Lemma \ref{lem:invertible2}.
Thus $\tilde\cL_\nu h=h$ has at least the two solutions $\{\bh(\tau),\bh(\tau_1)\}$.
\end{proof}
\subsection{Global existence of invariant measures}\label{sec:invariant_global}\ \\
This section is devoted to the proof of Theorem \ref{thm:implicitg}, which provides a criterion that implies the existence of a $\nu$ for which there exists at least three invariant measures.

To start with, we prove that the solutions in Theorem \ref{thm:implicit} can be extended.
\begin{proposition}\label{prop:endinv} Let $\nu_0\in (0,\nu_*)$ and $h_0\in\bW$ such that $\tilde\cL_{\nu_0}h_0=h_0$ and $\nu_0\int\alpha\Theta(\nu_0,h_0)\neq 1$. Then there exists an interval $[\nu_1, \nu_2]\subset [0,\nu_*]$, with $\nu_0\in (\nu_1, \nu_2)$ and $\bh\in\cC^0([\nu_1,\nu_2],\cB_1)\cap \cC^1((\nu_1,\nu_2),\cB_1)$ such that $\cL_{T_{\nu,\bh(\nu)}}\bh(\nu)=\bh(\nu)$. Moreover,  either $\nu_1=0$ or $\nu_1\int \alpha\Theta(\nu_1,\bh(\nu_1))=1$ and  either $\nu_2=\nu_*$ or $\nu_2\int \alpha\Theta(\nu_2,\bh(\nu_2))=1$. 
\end{proposition}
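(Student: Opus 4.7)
The plan is a standard continuation argument based on Lemmata~\ref{lem:noturn} and~\ref{lem:turn}. I would first define
\[
\nu_2 := \sup\{s\in[\nu_0,\nu_*]:\; \exists\,\bh\in\cC^1([\nu_0,s),\cB_1),\ \bh(\nu_0)=h_0,\ F(\nu,\bh(\nu))\equiv 0\}
\]
and analogously $\nu_1$. Lemma~\ref{lem:noturn} produces a local $\cC^1$ solution near $\nu_0$, so $\nu_1<\nu_0<\nu_2$, and the local uniqueness statement in Lemma~\ref{lem:noturn} guarantees that patching these local pieces gives a single, well-defined $\cC^1$ curve $\bh$ on $(\nu_1,\nu_2)$; the explicit derivative formula in Lemma~\ref{lem:noturn} forces $\nu\int\alpha\Theta(\nu,\bh(\nu))\neq 1$ throughout $(\nu_1,\nu_2)$. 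The analysis at the two endpoints being symmetric, I focus on $\nu_2$, assuming $\nu_2<\nu_*$.

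The extraction of a limit at $\nu_2$ combines two ingredients. First, for any sequence $\nu_n\to\nu_2^-$, Assumption~(A\ref{ass:3}) (compactness of the unit ball of $\cB_1$ in $\cB_0$) and $\bh(\nu_n)\in\bW$ yield, along a subsequence, $\bh(\nu_n)\to h_2$ in $\cB_0$ for some $h_2\in\bW$; the $\cB_0$-continuity of $\widetilde\cL_\nu$ on $\bW$, proven as in Theorem~\ref{thm:period}, passes to the limit and gives $\cL_{T_{\nu_2,h_2}}h_2=h_2$, whence Lemma~\ref{lem:spectra} upgrades $h_2$ to $\bW\cap\cB_3$ with $h_2=H(\nu_2,h_2)$. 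Second, the identity $\bh(\nu_n)=H(\nu_n,\bh(\nu_n))$ combined with the logarithmic modulus of continuity of $H$ into $\cB_2$ from Lemma~\ref{lem:H-prop} upgrades $\cB_0$-convergence to $\cB_2$-convergence, hence to $\cB_1$. I would then dichotomize: if $\nu_2\int\alpha\Theta(\nu_2,h_2)\neq 1$, Lemma~\ref{lem:noturn} at $(\nu_2,h_2)$ produces a $\cC^1$ branch $\widetilde\bh$ on $(\nu_2-\delta_1,\nu_2+\delta_1)$ together with a $\cB_1$-neighborhood of local uniqueness; for $n$ large $(\nu_n,\bh(\nu_n))$ sits in that neighborhood, so $\bh(\nu_n)=\widetilde\bh(\nu_n)$, and propagating this equality by local $\cC^1$ uniqueness along $(\nu_2-\delta_1,\nu_2)$ extends $\bh$ past $\nu_2$ and contradicts maximality. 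Hence $\nu_2\int\alpha\Theta(\nu_2,h_2)=1$.

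It remains to promote the subsequential limit to a true limit, i.e., to show $\bh\in\cC^0([\nu_1,\nu_2],\cB_1)$. Here I would invoke the change of coordinates $\Lambda$ of~\eqref{eq:coord-change} and the invertibility of $D_\zeta\widetilde F(0,\zeta_0)$ provided by Lemma~\ref{lem:invertible2} (using $\int\alpha h_2\neq 0$, which in the ambient setting of Theorem~\ref{thm:implicitg} follows from hypothesis~\eqref{eq:range}) to obtain, via the implicit function theorem, a local $\cC^1$ parameterization $(G(\tau),\bh(\tau))$ of \emph{all} zeros of $F$ in a $\cB_1$-neighborhood of $(\nu_2,h_2)$, satisfying $G(0)=\nu_2$, $G'(0)=0$, $\bh'(0)=-\nu_2\Theta(\nu_2,h_2)$. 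Under the non-degeneracy~\eqref{eq:seconder} Lemma~\ref{lem:turn} gives $G''(0)\neq 0$, so locally $\{F=0\}$ is the parabola $\nu-\nu_2\asymp\tau^2$; the connected incoming $\cC^1$ arc $\bh$ must therefore coincide, in this chart, with a single sign choice of $\tau$, which simultaneously yields the uniqueness of the limit $h_2$ and the H\"older bound $\|\bh(\nu)-h_2\|_{\cB_1}=O(\sqrt{\nu_2-\nu})$, proving continuous extension to $\nu_2$. I expect the main obstacle to be precisely this continuity at a turning-point endpoint, where the ODE for $\bh$ from Lemma~\ref{lem:noturn} has a blowing-up right-hand side; it is overcome by transferring the continuation to the $(\tau,\zeta)$-coordinates in which, thanks to Lemma~\ref{lem:invertible2}, the solution set is once again a $\cC^1$ graph.
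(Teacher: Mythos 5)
Your proposal follows essentially the same route as the paper: a continuation argument driven by Lemma~\ref{lem:noturn}, compactness to extract a limit at the endpoint of the maximal interval, and passing to the limit in the fixed-point equation to conclude that the endpoint is either $0$, $\nu_*$, or a point where $\nu\int\alpha\Theta(\nu,\bh(\nu))=1$. Two points of comparison. First, your upgrade of the convergence from $\cB_0$ to $\cB_1$ (via $\bh(\nu_n)=H(\nu_n,\bh(\nu_n))$ and the $\ve\ln\ve^{-1}$ modulus of continuity of $H$ into $\cB_2$) is correct but can be shortcut: the paper observes that Lemma~\ref{lem:H-prop} already gives a uniform $\cB_3$ bound on $\bh((a,b))$, so by (A\ref{ass:3}) the curve is relatively compact directly in $\cB_1$ (indeed in $\cB_2$), and one extracts the accumulation point there in one step. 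Second, your final paragraph — establishing uniqueness of the limit and continuity of $\bh$ at a turning-point endpoint via the $\Lambda$-chart and Lemmata~\ref{lem:invertible2}, \ref{lem:turn} — addresses a point the paper's proof glosses over (it simply sets $\bh(a)=h_*$ for an accumulation point), but be aware that it imports hypotheses \eqref{eq:range}, \eqref{eq:seconder}, \eqref{eq:nontrivial} that are not in the statement of Proposition~\ref{prop:endinv}; this is harmless in the only context where the proposition is used (the proof of Theorem~\ref{thm:implicitg}, where those hypotheses are assumed at every degenerate zero), and you correctly flag it, but as a standalone proof of the proposition as stated that step goes beyond the given hypotheses. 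Everything else checks out.
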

\begin{proof}
We discuss the extension to $\nu_1$, the other being identical. Theorem \ref{lem:noturn} implies that there exist $(a,b)\ni \nu_0$ and $\bh\in\cC^1((a,b),\cB_1)$ such that $\cL_{T_{\nu,\bh(\nu)}}\bh(\nu)=\bh(\nu)$. Lemma \ref{lem:H-prop} implies that $\bh(a,b)$ is relatively compact in $\cB_1$. Let $h_*$ be an accumulation point when $\nu\to a$. Then, by Lemma \ref{lem:regularity},
\[
h_*=\lim_{\nu_j\to a}\bh_{\nu_j}=\lim_{\nu_j\to a}\cL_{T_{\nu_j,\bh(\nu_j)}}\bh_{\nu_j}=\cL_{T_{a,h_*}}h_*.
\]
We can then set $\bh(a)=h_*$ and, if $a\int \alpha\Theta(a,\bh(a))= 1$ or $a=0$ we are done, otherwise we can apply again Theorem \ref{lem:noturn} with $\nu=a$.
Clearly this process can stop only at $0$ or at $\nu_1>0$ if $\nu_1\int \alpha\Theta(\nu_1,\bh(\nu_1))=1$.
This provides a continuous curve, yet the last formula of Lemma \ref{lem:noturn} implies that $\bh\in\cC^1((\nu_1,\nu_2),\cB_1)$.
\end{proof}

We are now ready to conclude the argument.
\begin{proof}[\bfseries Proof of Theorem \ref{thm:implicitg} ]
For $\nu=0$, the map is simply $T$, and by Lemma \ref{lem:spectra}, it has a unique invariant measure $h\in \cB_1$.  By Lemma \ref{lem:der} $D_hF=\Id$, thus we can apply Theorem \ref{thm:implicit} and we can extend the solution in an interval $[0,\nu_0)$. 
Then Proposition \ref{prop:endinv} implies that the solution can be extended to all the interval $[0,\nu_*]$ or we lose the invertibility of $D_hF$. In the latter case, we can still extend the solution using Theorem \ref{thm:implicit}. In this way we can define a function $(G,\bh):[0, a)\to [0,\nu_*)\times \bW$, $a\in\bR\cup\infty$, $G(0)=0$,with the wanted properties. Note that $G((0,a))\cap\{0\}=\emptyset$, since we know that $F(0,h)=0$ has the unique solution $\bh(0)$.

Suppose that there exists $\nu_1\in [0,\nu_*]$ such that $\sharp G^{-1}(\nu_1)=\infty$, then there exists $\{a_i\}_{i\in\bN}$ such that $\widetilde\cL_{\nu_1}\bh(a_i)=\bh(a_i)$. By Lemma \ref{lem:H-prop}, $\cH:=\{\bh(a_i)\}$ belong to a bounded set $\bW_2:= \{h\in\cB_2\cap\bW\;:\; \|h\|_{\cB_2}\leq K_\star\}$, so they are relative compact in $\cB_1$ by Assumption (A\ref{ass:3}). Let $h_*\in\cB_1$ be an accumulation point, then, by continuity $\widetilde\cL_{\nu_1}h_*=h_*$. It follows that the $\cB_1$ closure of $\cH$ consists of fixed points on $\widetilde\cL_{\nu_1}$. Let $B_\delta(h)=\{g\in\bW\;:\;\|h-g\|_{\cB_1} \leq \delta\}$. Then, for each $h\in\bW_2$, if  $h\not \in\overline{\cH}$, there exists $\delta_h>0$ such that $B_{\delta_h}(h)\cap\overline{\cH}=\emptyset$, while if $h\in  \overline{\cH}$ then Theorem \ref{thm:implicit} implies that there exists $\delta_h$ such that $h$ is the only solution of $\widetilde\cL_{\nu_1}g=g$ in $B_{\delta_h}(h)$. Since $\{B_{\delta_h}(h)\}_{h\in\bW_2}$ is a covering of $\bW_2$, which is relatively compact in $\cB_1$, by Assumption (A\ref{ass:3}), we can extract a finite subcover, and this immediately implies that $\sharp\cH<\infty$.

It follows that the curve we have constructed cannot go back and forth infinitely many times in the interval $(0,\nu^*)$, and it cannot go back to $\nu=0$, hence it must eventually reach $\nu^*$.

To conclude, if $\widetilde\cL_{\bar\nu} h=h$ for some $\bar \nu<\nu^*$, and $D_hF$ is not invertible, then there are two possibilities: either $h$ belongs to the curve we constructed starting from zero or not. In the first case, we have that the curve is described by a function $(G,\bh)$, $G(0)=0$, and for some $\tau_0$, we have $G(\tau_0)=\nu$ and $\bh(\tau_0)=h$. We can assume without loss of generality that $\tau_0$ is the first time $G'=0$. Since the second derivative is non zero, it follows that for $\tau>\tau_0$ $G$ is decreasing. Since it cannot go back to zero it must have another point (we call them turning points) in which $G'=0$ and start to increase again. Since it can have only finitely many turning points, it must eventually go back to $\tau_0$, and that means that there exists $\epsilon>0$ such that for $\tau\in (\tau_0-\epsilon,\tau_0)$ we have $\sharp G^{-1}(\tau)\geq 3$.
If instead $G'(\tau)\neq 0$ for all $\tau$, then the curve starting from zero has no turning point, and in the interval $[0,\nu_*]$ it provides an invariant measure for each value of $\tau$. It follows that $h$ does not belong to such a curve. We can then apply Theorem \ref{thm:implicit} and Proposition \ref{prop:endinv}  to create a curve of solution starting from $h$. Such a curve cannot end at zero hence it must end, in both directions, at $\nu_*$. If we call $(G_1,\bh_1)$ such a curve, near $\nu_*$ it must have at least two branches. Hence, together with the curve $(G,\bh)$, provides again at least three invariant measures.
\end{proof}
\section{Local stability and physical measures}\label{sec:physical}
Here we study the stability of the invariant measures for a fixed $\nu$. Since if $\widetilde \cL_\nu h=h$, then $h\in\cB_3$ (see Lemma \ref{lem:H-prop}) it will be helpful to study $\widetilde \cL_{\nu}$ as a function from $\cK:=\{h\in\cB_3 \cap (\cC^0)'\;:\;h\geq 0, \int h=1\} \subset \cB_1$ to $\cB_1$. Note that $\cB_3 \cap (\cC^0)'$  is a space of measures. Hence $\cK$ is a space of probability measures. With the $\cB_1$ topology, $\cK$ is an affine space modeled over a normed space (but not a Banach space). Moreover, 
$\widetilde \cL_\nu\cK\subset \cK$, hence we can, and will, restrict our study to $\cK$. Since we want to talk about derivatives of functions on $\cK$, we need to define a tangent space. Given the nonstandard topology on $\cK$, it is not obvious what to choose as a tangent space. We find it convenient to define $T\cK$ (the tangent space to $\cK$) as $\{\phi\in\cB_2\cap (\cC^0)'\;:\; \int \phi=0\} $ equipped with the $\cB_1$ norm.
\begin{lemma}\label{lem:Gat}
$\widetilde\cL_\nu\in\cC^1(\cK,\cK)$ with G\^ateaux differential, at $h\in \cK$, $\cD_h(\phi)\in L(T\cK,T\cK)$ defined by\footnote{We refer to \cite{DM13} for the theory of differentiability on $\cK$. To be precise, \cite[Section 3.2]{DM13} deals with functions between normed vector spaces, not affine spaces modeled on normed vector spaces, yet the theory applies to the latter case essentially verbatim.}
\begin{equation}\label{eq:LT_der}
\cD_h(\phi):={D}\widetilde \cL_{\nu} |_{h}(\phi)=\cL_{T_{\nu,h}}\phi-\nu\divv \cL_{ T_{\nu,h}}\beta h\int\alpha\phi.
\end{equation}
\end{lemma}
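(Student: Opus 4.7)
The plan is to compute the G\^ateaux differential directly from the product structure $\widetilde\cL_\nu(h) = \cL_{T_{\nu,h}}(h)$, verify it lies in $L(T\cK,T\cK)$, show it depends continuously on $h$, and then invoke \cite[Proposition 3.2.15]{DM13} (already cited in the paper after Lemma \ref{lem:L-der}) to upgrade continuous G\^ateauxness to $\cC^1$.

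First I would fix $h \in \cK$, $\phi \in T\cK$, and (restricting $|\epsilon|$ so that $h+\epsilon\phi$ still lies in $\cK$, using that $\phi \in \cC^0{}'$ and $h$ is a probability) split
\begin{equation*}
\widetilde\cL_\nu(h+\epsilon\phi) - \widetilde\cL_\nu(h)
= \bigl[\cL_{T_{\nu,h+\epsilon\phi}} - \cL_{T_{\nu,h}}\bigr]h \;+\; \epsilon\,\cL_{T_{\nu,h}}\phi \;+\; \bigl[\cL_{T_{\nu,h+\epsilon\phi}} - \cL_{T_{\nu,h}}\bigr](\epsilon\phi).
\end{equation*}
Applying Lemma \ref{lem:L-der} with $S(u) = T_{\nu,h+u\phi}$ (so $S'(u)=\nu\beta\int\alpha\phi$), the first bracket divided by $\epsilon$ converges in $\cB_1$ to $-\nu\divv\cL_{T_{\nu,h}}(\beta h)\int\alpha\phi$, using that $h\in\cB_3$ (by Lemma \ref{lem:H-prop}) so we may apply the third estimate of Lemma \ref{lem:regularity} at level $i=1$. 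The third piece above is $O(\epsilon^2)$ in $\cB_1$ by the second estimate of Lemma \ref{lem:regularity} applied with $\psi=\phi\in\cB_2$, so dividing by $\epsilon$ it vanishes. This yields exactly formula \eqref{eq:LT_der}.

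Next, I verify $\cD_h\in L(T\cK,T\cK)$. Boundedness in the $\cB_1$ norm follows from (A\ref{ass:2}), (A\ref{ass:0}) (the divergence maps $\cB_2\to\cB_1$ and $\cL_{T_{\nu,h}}\beta h\in\cB_3$ since $\beta\in\cC^\infty$, (A\ref{ass:1}) and $h\in\cB_3$), together with $|\int\alpha\phi|\le C_*\|\alpha\|_{\cC^{\hat c}}\|\phi\|_{\cB_0}$. For the range, integration gives $\int\cL_{T_{\nu,h}}\phi=\int\phi=0$ by \eqref{eq:L_def1}, and $\int\divv(\cL_{T_{\nu,h}}\beta h)=0$ as the integral of a divergence on $\TT^d$; so $\int \cD_h(\phi)=0$. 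Moreover $\cL_{T_{\nu,h}}\phi\in\cB_2$ (by (A\ref{ass:2}) with $\phi\in\cB_2$) and $\divv\cL_{T_{\nu,h}}(\beta h)\in\cB_2$ (using $\beta h\in\cB_3$ and (A\ref{ass:0})), confirming the target regularity required by the definition of $T\cK$.

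For the $\cC^1$ conclusion, the map $h\mapsto \cD_h$ must be continuous from $\cK$ (with $\cB_1$ topology) into $L(T\cK,T\cK)$. Continuity of $h\mapsto \cL_{T_{\nu,h}}\in L(\cB_2,\cB_1)$ is exactly the second estimate of Lemma \ref{lem:regularity}, while continuity of $h\mapsto\divv\cL_{T_{\nu,h}}(\beta h)$ in $\cB_1$ combines that estimate with (A\ref{ass:0}), (A\ref{ass:1}), and the fact that $h$ appears as a bounded factor in $\cB_3$ depending continuously (as an element of $\cB_1$, trivially) on itself. Given continuity of the G\^ateaux differential, \cite[Proposition 3.2.15]{DM13} yields Fréchet differentiability and hence $\widetilde\cL_\nu\in\cC^1(\cK,\cK)$. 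The main subtlety, as usual in this paper, is bookkeeping of the three-level regularity loss: we want the limit in $\cB_1$, the third estimate of Lemma \ref{lem:regularity} costs two orders, so $h$ must live in $\cB_3$—which Lemma \ref{lem:H-prop} has already arranged for fixed points but also needs to be respected by restricting the domain to $\cK$.
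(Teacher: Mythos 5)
Your proof follows essentially the same route as the paper's: the same splitting of $\widetilde\cL_\nu(h+t\phi)-\widetilde\cL_\nu(h)$ into the term linear in the map perturbation and the term $t\cL_{T_{\nu,h+t\phi}}(\phi)$, with the second and third estimates of Lemma \ref{lem:regularity} (using $h\in\cB_3$ and $\phi\in\cB_2$) supplying the two limits, so it is correct. The only differences are that you spell out the verification that $\cD_h\in L(T\cK,T\cK)$ and the continuity/\cite[Proposition 3.2.15]{DM13} step, which the paper leaves implicit.
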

\begin{proof}
For each $\phi\in T\cK$, and $t\in\bR$,
\[
\widetilde \cL_{\nu}(h+t\phi)-\widetilde \cL_{\nu}(h)=(\cL_{T_{\nu,h+t\phi}}-\cL_{T_{\nu,h}})(h)+t\cL_{T_{\nu,h+t\phi}}(\phi).
\]
We start by computing the G\^ateaux derivative.
By Lemma \ref{lem:regularity} we have
\[
\lim_{t\to 0}\frac{\|\cL_{T_{\nu,h+t\phi}}-\cL_{T_{\nu,h}})(h)+t \nu\divv \cL_{ T_{\nu,h}}\beta h\int\alpha\phi\|_{\cB_1}}{|t|}=0.
\]
The results follows since, by Lemma \ref{lem:regularity},
\[
\lim_{t\to 0} \|\cL_{T_{\nu,h+t\phi}}(\phi)-\cL_{T_{\nu,h}}(\phi)\|_{\cB_1}=0.
\]
It follows that $\cD_h(\phi)$ is the G\^ateaux differential. Since $\cD_h\in L(T\cK,T\cK)$ the lemma follows.\end{proof}
Since the chain rule does not apply to the G\^ateaux Differential, we state a poor man substitute of the chain rule for the case at hand.
\begin{lemma}\label{lem:chain-rule}
For each  $g\in\cK$, and $\phi\in T\cK$, there exists a constant $H, J>1$ such that for each $\phi\in T\cK$ and  $n\geq \bar n$ we have
\[
\begin{split}
&\|\widetilde\cL_{\nu}^n(g+\phi)-\widetilde\cL_{\nu}^n(g)-\cD_{g_{k-1}}\cdots \cD_{g}(\phi)\|_{\cB_1}\leq H^n\|\phi\|_{\cB_2}\|\phi\|_{\cB_0}\\
&\|\widetilde\cL_{\nu}^n(g+\phi)-\widetilde\cL_{\nu}^n(g)\|_{\cB_2}\leq J\Big(H^n\|\phi\|_{\cB_2}\|\phi\|_{\cB_0} \\
&\phantom{\|\widetilde\cL_{\nu}^n(g+\phi)-\widetilde\cL_{\nu}^n(g)\|_{\cB_2}\leq}
+\sum_{j=0}^{k-1}\vartheta^{k-j}\|\cD_{g_{j-1}}\cdots \cD_{g}\phi\|_{\cB_1}+\vartheta^k\|\phi\|_{\cB_2}\Big).
\end{split}
\]
\end{lemma}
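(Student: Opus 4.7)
The plan is to prove both inequalities by induction on $n$, with the base case $n=1$ obtained directly from Lemma \ref{lem:regularity}, and the induction propagated using Assumption (A\ref{ass:6}) together with Lemma \ref{lem:H-prop}. Throughout I set $g_k := \widetilde\cL_\nu^k g$, $\hat g_k := \widetilde\cL_\nu^k(g+\phi)$, $\phi_k := \hat g_k - g_k$, and $\psi_k := \cD_{g_{k-1}}\cdots\cD_g\phi$ (with $\psi_0 := \phi$); note that $\int\phi_k=0$ by mass preservation and that $g_k\in\cB_3$ with a uniform bound by Lemma \ref{lem:H-prop}.

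For the base case, decompose
\[
\phi_1 - \cD_g\phi = \bigl\{[\cL_{T_{\nu,g+\phi}}-\cL_{T_{\nu,g}}]g + \nu(\divv\cL_{T_{\nu,g}}\beta g)\textstyle\int\alpha\phi\bigr\} + [\cL_{T_{\nu,g+\phi}}-\cL_{T_{\nu,g}}]\phi.
\]
Apply the third assertion of Lemma \ref{lem:regularity} to the affine family $S(u):=T_{\nu,g+u\phi}$ (for which $S'(u)=\nu\beta\int\alpha\phi$, $S''\equiv 0$, so $\|S'\|_{\cC^{\bar r}}\lesssim\|\phi\|_{\cB_0}$) with $s=1$, $i=1$, $\psi=g$: this bounds the bracketed term by $C\|\phi\|_{\cB_0}^2\|g\|_{\cB_3}$. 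The second assertion applied with $\psi=\phi$, $i=1$ bounds the cross term by $C\|\phi\|_{\cB_0}\|\phi\|_{\cB_2}$. Using $\|\phi\|_{\cB_0}\leq\|\phi\|_{\cB_2}$, this yields $\|\phi_1-\cD_g\phi\|_{\cB_1}\leq H_0\|\phi\|_{\cB_0}\|\phi\|_{\cB_2}$ for some $H_0$.

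For the inductive step of the first inequality, apply the base case at $g_n$ with increment $\phi_n$: $\phi_{n+1}=\cD_{g_n}\phi_n+R_n$, with $\|R_n\|_{\cB_1}\leq H_0\|\phi_n\|_{\cB_0}\|\phi_n\|_{\cB_2}$. A priori control $\|\phi_k\|_{\cB_i}\leq \bar H^k\|\phi\|_{\cB_i}$ for $i\in\{0,2\}$ follows from the recursion $\phi_{k+1}=\cL_{T_{\nu,\hat g_k}}\phi_k+[\cL_{T_{\nu,\hat g_k}}-\cL_{T_{\nu,g_k}}]g_k$, Assumption (A\ref{ass:6}) and the second assertion of Lemma \ref{lem:regularity}. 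Writing $\phi_n=\psi_n+E_n$ with the inductive hypothesis $\|E_n\|_{\cB_1}\leq H^n\|\phi\|_{\cB_0}\|\phi\|_{\cB_2}$, and using the uniform bound $\|\cD_h\|_{L(\cB_1,\cB_1)}\leq M$ (which follows from A\ref{ass:6}, A\ref{ass:0}, A\ref{ass:1}, and the uniform $\cB_3$ bound on $h$), gives $\|\phi_{n+1}-\psi_{n+1}\|_{\cB_1}\leq (MH^n+H_0\bar H^{2n})\|\phi\|_{\cB_0}\|\phi\|_{\cB_2}$, and a sufficiently large choice of $H$ (with $H\geq 2M$ and $H\geq \bar H^2\max(1,2H_0)$) closes the induction.

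For the second inequality, expand $\cD_{g_k}\psi = \cL_{T_{\nu,g_k}}\psi + \xi_k(\psi)$ where $\xi_k(\psi):=-\nu(\divv\cL_{T_{\nu,g_k}}\beta g_k)\int\alpha\psi$; iterating yields
\[
\psi_n = \cL_{T_{\nu,g_{n-1}}}\cdots\cL_{T_{\nu,g_0}}\phi + \sum_{j=1}^{n}\cL_{T_{\nu,g_{n-1}}}\cdots\cL_{T_{\nu,g_j}}\,\xi_{j-1}(\psi_{j-1}).
\]
Applying (A\ref{ass:6}) to each composition of transfer operators, together with $\|\xi_k(\psi)\|_{\cB_i}\leq\bar C\|\psi\|_{\cB_1}$ for $i\in\{1,2\}$ (from A\ref{ass:0}, A\ref{ass:1} and the uniform $\cB_3$ bound on $g_k$), produces $\|\psi_n\|_{\cB_2}\leq C_*\vartheta^n\|\phi\|_{\cB_2}+K\sum_{j=0}^{n-1}\vartheta^{n-j}\|\psi_j\|_{\cB_1}$. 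The error $E_n=\phi_n-\psi_n$ satisfies $E_{k+1}=\cD_{g_k}E_k+R_k$; applying the same $\cL+\xi$ decomposition to this recursion, using (A\ref{ass:6}) and the already-proven $\cB_1$ estimate, absorbs $\|E_n\|_{\cB_2}$ into the $JH^n\|\phi\|_{\cB_0}\|\phi\|_{\cB_2}$ term. The principal obstacle is exactly this $\cB_2$ control of the remainder: a naive iteration applied directly to $\cD$-products would lose a factor $C_*$ per step (incompatible with $\vartheta<1$ giving a contraction), and the resolution is precisely to split $\cD = \cL + \xi$ so that (A\ref{ass:6}) acts on genuine products of transfer operators while the rank-one corrections $\xi_k$ appear as source terms contracted by the Lasota-Yorke bound.
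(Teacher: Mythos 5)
Your treatment of the first ($\cB_1$) inequality is sound and is essentially the paper's argument repackaged as an induction: the one-step splitting into a second-order Taylor remainder on the reference density plus a cross term, the a priori growth bounds $\|\phi_k\|_{\cB_i}\leq \bar H^k\|\phi\|_{\cB_i}$, and the propagation of the error through the uniformly bounded $\cD_{g_k}$ all match what the paper does.

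The second ($\cB_2$) inequality, however, has a genuine gap. Your error recursion $E_{k+1}=\cD_{g_k}E_k+R_k$ forces you to control the one-step remainders $R_k$ in the $\cB_2$ norm, and this is exactly what is not available. The quadratic bound $\|R_k\|\lesssim\|\phi_k\|_{\cB_0}\|\phi_k\|_{\cB_2}$ holds only in $\cB_1$: the second-order Taylor estimate (third assertion of Lemma \ref{lem:regularity}) costs two degrees of regularity, so in $\cB_2$ it would need $g_k\in\cB_4$, and the cross term $[\cL_{T_{\nu,g_k+\phi_k}}-\cL_{T_{\nu,g_k}}]\phi_k$ measured in $\cB_2$ would need $\|\phi_k\|_{\cB_3}$, which $\phi\in T\cK\subset\cB_2$ does not provide. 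Without these, $\|R_k\|_{\cB_2}$ is only \emph{first} order in $\phi$, and the resulting contribution $\sum_j\vartheta^{n-j}\|R_{j-1}\|_{\cB_2}$ produces a term of size roughly $\bar H^{n}\|\phi\|_{\cB_2}$ (no extra factor $\|\phi\|_{\cB_0}$), which cannot be absorbed into $H^n\|\phi\|_{\cB_2}\|\phi\|_{\cB_0}+\vartheta^n\|\phi\|_{\cB_2}$ and would destroy the application in the proof of Theorem \ref{lem:physical}. Your $\cD=\cL+\xi$ splitting correctly handles the rank-one corrections $\xi_k$ (those are genuinely controlled by $\cB_1$ norms), but it does not address the nonlinear remainders. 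The paper's resolution is structurally different: for the $\cB_2$ bound it never forms the one-step remainders at all, but writes $\widetilde\cL_\nu^k(g+\phi)=\cL_{T_{\nu,g_{k-1}+\phi_{k-1}}}\cdots\cL_{T_{\nu,g+\phi}}(g+\phi)$ and telescopes against $\cL_{T_{\nu,g_{k-1}}}\cdots\cL_{T_{\nu,g}}(g)$, so that every operator difference is applied only to the smooth densities $g_j\in\cB_3$ (where one derivative suffices), the initial $\phi\in\bV\cap\cB_2$ is disposed of in one shot by the contraction (A\ref{ass:4}), and the quadratic smallness enters only through the scalar quantities $\int\alpha\phi_j$, controlled by the already-proven first inequality. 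Also, as a minor point, the $\vartheta^{n-j}$ decay on the summands comes from (A\ref{ass:4}) acting on $\bV$ (all source terms have zero integral), not from (A\ref{ass:6}) alone, which would leave an undamped $K\|\cdot\|_{\cB_1}$ term.
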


\begin{proof}
For each $k\in\bN$ let $g_k=\widetilde\cL_\nu^k(g)$ and $\phi_k=\widetilde\cL_\nu^k(g+\phi)-g_k$.  With this definition we get $\widetilde\cL_\nu^k(g+\phi)= \widetilde\cL_{\nu}(g_{k-1}+\phi_{k-1})$. 
We start with rough estimates in the $\cB_i$ norms: by Assumption (A\ref{ass:6}) we have
\[
\|g_k\|_{\cB_3}\leq C_*\vartheta^k\|g\|_{\cB_3}+K\|g\|_{\cB_2}\leq (C_*+K)\|g\|_{\cB_3}=:R,
\]
and, for $i\in\{0,1,2\}$, by Lemma \ref{lem:regularity} it follows 
\[
\begin{split}
\|\phi_k\|_{\cB_i}&\leq \|\widetilde\cL_{\nu}(g_{k-1}+\phi_{k-1})-\widetilde\cL_{\nu}(g_{k-1})\|_{\cB_i}\\
&=\|\cL_{\nu,T_{\nu,g_{k-1}+\phi_{k-1}}}\phi_{k-1}\|_{\cB_i}+\|\cL_{\nu,T_{\nu,g_{k-1}+\phi_{k-1}}}g_{k-1}-\cL_{\nu,T_{\nu,h}}g_{k-1}\|_{\cB_i}\\
&\leq \|\cL_{\nu,T_{\nu,g_{k-1}+\phi_{k-1}}}\|_{L(\cB_i,\cB_i)}\|\phi_{k-1}\|_{\cB_{i}}+C_\star|\nu|\|\beta\|_{\cC^{\bar r}}\|\alpha\|_{\cC^{\bar r}}
\|g_{k-1}\|_{\cB_{i+1}}\|\phi_{k-1}\|_{\cB_{0}}\\
&\leq C\|\phi_{k-1}\|_{\cB_{i}}\leq C^k\|\phi\|_{\cB_{i}}.
\end{split}
\]
for some constant $C>0$ depending only on $T,\nu,\beta,\alpha$ and $R$.
Next, Lemma \ref{lem:regularity} implies (in the $\cB_1$ topology)
\[
\begin{split}
\phi_k&=\widetilde\cL_\nu(g_{k-1}+\phi_{k-1})-\widetilde\cL_\nu(g_{k-1})\\
&=\cL_{T_{\nu,g_{k-1}+\phi_{k-1}}}(g_{k-1})-\cL_{T_{\nu,g_{k-1}}}(g_{k-1})+\nu\divv \cL_{ T_{\nu,g_{k-1}}}\beta g_{k-1}\int\alpha\phi_{k-1}\\
&\phantom{=}
+\cL_{T_{\nu,g_{k-1}}}(\phi_{k-1})-\nu\divv \cL_{ T_{\nu,g_{k-1}}}\beta g_{k-1} \int\alpha\phi_{k-1}\\
&\phantom{=}
+\cL_{T_{\nu,g_{k-1}+\phi_{k-1}}}(\phi_{k-1})-\cL_{T_{\nu,g_{k-1}}}(\phi_{k-1})\\
&=\cD_{g_{k-1}}(\phi_{k-1})+\cL_{T_{\nu,g_{k-1}+\phi_{k-1}}}(\phi_{k-1})-\cL_{T_{\nu,g_{k-1}}}(\phi_{k-1})
+\cO(\|\phi_{k-1}\|_{\cB_0}^2R)\\
&=\cD_{g_{k-1}}(\phi_{k-1})+\cO(\|\phi_{k-1}\|_{\cB_2}\|\phi_{k-1}\|_{\cB_0}+\|\phi_{k-1}\|_{\cB_0}^2R)\\
&=\cD_{g_{k-1}}\cdots \cD_{g}(\phi_0)\\
&\phantom{=}
+\sum_{j=0}^{k-1}\cO\left(\|\cD_{g_{j-1}}\cdots \cD_{g}\|_{L(\cB_1,\cB_1)}\left[\|\phi_{k-j}\|_{\cB_2}\|\phi_{k-j}\|_{\cB_0})+
\|\phi_{k-j}\|_{\cB_0}^2R\right]\right).
\end{split}
\]
Hence, by our previous rough estimate, there exists $H>1$ such that
\[
\|\phi_k-\cD_{g_{k-1}}\cdots \cD_{g}(\phi)\|_{\cB_1}\leq H^k\|\phi\|_{\cB_2}\|\phi\|_{\cB_0}.
\]
This proves the first part of the Lemma.
By  Assumptions (A\ref{ass:4}), (A\ref{ass:6}), (A\ref{ass:1}) and Lemma \ref{lem:regularity}, we have (in the $\cB_2$ topology)
\[
\begin{split}
\phi_k&=\widetilde\cL_{\nu}^k(g+\phi)-g_k=\cL_{T_{\nu,g_{k-1}+\phi_{k-1}}}\cdots \cL_{T_{\nu,g+\phi_0}}(g+\phi)-g_k\\
&=\cL_{T_{\nu,g_{k-1}+\phi_{k-1}}}\cdots \cL_{T_{\nu,g+\phi}}(g)-\cL_{T_{\nu,g_{k-1}}}\cdots \cL_{T_{\nu,g}}(g)+\cO(\vartheta^k\|\phi\|_{\cB_2})\\
&=\sum_{j=0}^{k-1}\cL_{T_{\nu,g_{k-1}+\phi_{k-1}}}\cdots  \cL_{T_{\nu,g_{j+1}+\phi_{j+1}}}\left[\cL_{T_{\nu,g_j+\phi_{j}}}g_j-g_{j+1}\right]+\cO(\vartheta^k\|\phi\|_{\cB_2})\\
&=\sum_{j=0}^{k-1}\cO\left(\nu\vartheta^{k-j}R\int \alpha \phi_j\right)+\cO(\vartheta^k\|\phi\|_{\cB_2})\\
&=\cO\left(R H^k\|\phi\|_{\cB_2}\|\phi\|_{\cB_0} +\sum_{j=0}^{k-1}\vartheta^{k-j}\|\cD_{g_{j-1}}\cdots \cD_{g}\phi\|_{\cB_0}+\vartheta^k\|\phi\|_{\cB_2}\right),
\end{split}
\]
where in the last line, we have used the first part of the lemma.
From the above the lemma follows.
\end{proof}
Note that, since $T\cK$ is equipped with the $\cB_1$ topology, Lemma \ref{lem:Gat} and the density assumption \eqref{eq:embedding} imply that $\cD_h$ can be extended to $\cB_1$. With a slight abuse of notation, we will call such an extension $\cD_h$ as well, so $\cD_h\in L(\cB_1,\cB_1)$, moreover $\cD_h(\bV)\subset \bV$. Moreover, since $\cD_h$ is a compact perturbation of $\cL_{T_{\nu,h}}\in L(\cB_1,\cB_1)$, 
$\sigma_{\operatorname{ess}}(\cD_h)=\sigma_{\operatorname{ess}}(\cL_{T_{\nu,h}})$. We can then limit ourselves to the study of the discrete (non-essential) spectrum. To this end, recall the definitions of $\Theta(z)$ and $\Xi(z)$ in \eqref{eq:Thetaz}, $\sigma(\cL_{T_{\nu,h}})\setminus\{1\}=\sigma(Q_{T_{\nu,h}})$ and notice that, for $z\not\in\sigma(Q_{T_{\nu,h}})$, we can write
\begin{equation}\label{eq:tetazeta}
\Theta(z)=-\nu (z\Id-Q_{T_{\nu,h}})^{-1}\divv \cL_{ T_{\nu,h}}\beta h.
\end{equation}
\begin{proposition}\label{prop:stability}
Assume that $\Xi(1)\neq 1$, and
\[
\int \alpha h\neq 0.
\]
Then $z\in\sigma(\cD_h)\setminus \sigma(\cL_{T_{\nu,h}})$ iff $\Xi(z)=1$. In addition, $1\in\sigma(\cD_h)$. Moreover, if $z$ is an eigenvalue with $|z|\geq 1$, then it is associated with a unique eigenvector, and the corresponding eigenvector belongs to $\cB_2$. Moreover, if $\Xi(1)>1$, then there exists $z_1\in \sigma(\cD_h)$ such that $|z_1|>1$ (at most finitely many); while if $\sup_\theta |\Xi(e^{i\theta})|<1$, then there exists $\kappa<1$ such that $\sigma(\cD_h)\setminus\{1\}\subset \{z\in\bC\;:\; |z|<\kappa\}$.
\end{proposition}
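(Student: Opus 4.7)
The plan exploits the rank-one perturbation structure $\cD_h = \cL - w\,\int\alpha\cdot$, with $\cL := \cL_{T_{\nu,h}}$ and $w := \nu\,\divv\cL_{T_{\nu,h}}\beta h$, so that $\Theta(z) = (\cL - z\Id)^{-1}w$ for $z\notin\sigma(\cL)$. A key preliminary observation is that $\int w = 0$ (integral of a divergence on $\bT^d$), i.e.\ $w\in\bV$; combined with the spectral gap of $\cL|_{\bV}$ from Lemma~\ref{lem:spectra}, this makes $\Theta$ and $\Xi$ analytic across the simple eigenvalue $z=1$ of $\cL$, and hence analytic on all of $\{|z|>\vartheta\}$.

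For $z\notin\sigma(\cL)$, solving $(\cD_h - z\Id)\phi = \psi$ by $\phi = (\cL - z\Id)^{-1}\psi + \Theta(z)\int\alpha\phi$ and then pairing with $\alpha$ yields $(1 - \Xi(z))\int\alpha\phi = \int\alpha(\cL-z\Id)^{-1}\psi$. Hence $z\in\sigma(\cD_h)$ iff $\Xi(z)=1$, and the kernel is then spanned by $\Theta(z)$, which lies in $\cB_2$ by (A\ref{ass:2})--(A\ref{ass:1}) and $h\in\cB_3$ (Lemma~\ref{lem:H-prop}). That $1\in\sigma(\cD_h)$ follows because $\int\cD_h\phi = \int\phi$, so the constant $1$ is a left eigenvector at eigenvalue $1$. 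Uniqueness of the right eigenvector splits: if $\int\alpha\phi = 0$ then $\phi\in\ker(\cL-\Id) = \bR h$ and $\int\alpha h\neq 0$ forces $\phi = 0$; otherwise normalise $\int\alpha\phi = 1$ and use the spectral gap on $\bV$ to solve $(\cL-\Id)\phi = w$, giving the unique form $\Theta(1) + \tfrac{1-\Xi(1)}{\int\alpha h}h \in \cB_2$ (the coefficient is fixed by $\Xi(1)\neq 1$). For $|z|\ge 1$, $z\neq 1$, one has $z\notin\sigma(\cL)$ by the spectral gap, so uniqueness is already settled by the rank-one calculation.

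For the existence claim under $\Xi(1) > 1$: $\Xi$ is real-valued on $\bR$, and the Neumann series at infinity gives $\Xi(z) = O(|z|^{-1})\to 0$ as $|z|\to\infty$. The intermediate value theorem on $[1,\infty)$ produces $z_1 > 1$ with $\Xi(z_1) = 1$, so $z_1\in\sigma(\cD_h)$; discreteness of the zero set of the analytic non-constant function $\Xi - 1$, combined with $|\Xi - 1|\to 1$ at infinity, forces only finitely many such $z_1$.

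Under $\sup_\theta|\Xi(e^{i\theta})| < 1$, I argue via the argument principle. On $|z|=1$, $\operatorname{Re}(\Xi(z)-1) \le |\Xi(z)|-1 < 0$, so $\Xi-1$ maps $|z|=1$ into the open left half-plane, contributing winding number $0$ about the origin; on $|z|=R$ large, $\Xi - 1 \approx -1$, again winding $0$. As $\Xi - 1$ is analytic and has no poles in $\{|z|>\vartheta\}$, the argument principle on the annulus $\{1 < |z| < R\}$ forbids zeros there, and $|z|=1$ has none by hypothesis, so $\sigma(\cD_h)\cap\{|z|\ge 1\} \subseteq \{1\}$. Combined with (i) $\sigma_{\operatorname{ess}}(\cD_h) = \sigma_{\operatorname{ess}}(\cL) \subseteq \{|z|\le\vartheta\}$ (rank-one perturbation), (ii) isolatedness of $1$ in $\sigma(\cD_h)$ from continuity of $\Xi$ at $1$ together with $\Xi(1)\neq 1$, and (iii) discreteness of the isolated eigenvalues outside the essential spectrum, one extracts the desired $\kappa < 1$. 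The main obstacle I anticipate is this last step: justifying rigorously the analyticity of $\Xi$ across $z=1\in\sigma(\cL)$ (which hinges on $w\in\bV$) and running the argument-principle count on an annulus whose inner boundary $|z|=1$ brushes a spectral point of $\cL$.
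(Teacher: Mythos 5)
Your proposal is correct and follows essentially the same route as the paper: the rank-one structure $\cD_h=\cL_{T_{\nu,h}}-w\int\alpha\,\cdot$ with $w=\nu\divv\cL_{T_{\nu,h}}\beta h\in\bV$, the characteristic equation $\Xi(z)=1$ with kernel spanned by $\Theta(z)$, the explicit eigenvector at $z=1$ (yours agrees with \eqref{eq:one_eigen} up to normalization), and the intermediate value theorem when $\Xi(1)>1$. The only deviation is in the last step, where the paper simply invokes the maximum modulus principle for the exterior domain ($\Xi$ holomorphic on $\{|z|>\vartheta\}$ and vanishing at infinity gives $\sup_{|z|\ge1}|\Xi|<1$) instead of your argument-principle count on an annulus; both are valid, and the analyticity of $\Xi$ across $z=1$ that you flag as a concern is exactly the content of \eqref{eq:tetazeta}.
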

\begin{proof}
We look for the $\phi\in\cB_1\setminus \{0\}$ such that $\cD_h(\phi)=z\phi$ where $z\in\bC\setminus \sigma_{\operatorname{ess}}(\cL_{T_{\nu,h}})$. 
Then, using \eqref{eq:LT_der},
\begin{equation}\label{eq:stability1}
-\nu\divv \cL_{ T_{\nu,h}}\beta h\int\alpha\phi=z\phi-\cL_{T_{\nu,h}}\phi.
\end{equation}
If $\divv \cL_{ T_{\nu,h}}\beta h\equiv 0$, then $\cD_h=\cL_{T_{\nu,h}}$ and the Lemma is trivially true. We can then consider only the case $\divv \cL_{ T_{\nu,h}}\beta h\not\equiv 0$.
If $\int \phi\neq 0$, then we can assume, without loss of generality, that $\int\phi=1$ and integrating \eqref{eq:stability1} yields
\begin{equation}\label{eq:eigenone}
\begin{split}
&z=1\\
&(\Id-\cL_{T_{\nu,h}})\phi=-\nu\divv \cL_{ T_{\nu,h}}\beta h\int\alpha\phi.
\end{split}
\end{equation}
If we set $\phi=h+\hat\phi$, $\hat\phi\in\bV$, then by \eqref{eq:eigenone}
\[
\hat\phi=\Theta(1)\int\alpha\phi
\]
which, multiplying by $\alpha$ and integrating, has a solution only if $\Xi(1)\neq 1$ and yields
\[
\begin{split}
&\int\alpha\hat\phi=[1-\Xi(1)]^{-1}\Xi(1)\int \alpha h\\
&\int\alpha\phi=[1-\Xi(1)]^{-1}\int \alpha h.
\end{split}
\]
The above implies that 
\begin{equation}\label{eq:one_eigen}
\phi=h+\Theta(1)[1-\Xi(1)]^{-1}\int \alpha h
\end{equation} 
is and eigenvector, and hence $1\in\sigma(\cD_h)$.
We are then left with the case $\phi\in\bV$.\footnote{Indeed, $\cD_h\bV\subset \bV$.} If $z\not\in\sigma(\cL_{T_{\nu,h}})$, then
equation \eqref{eq:stability1} can be rewritten as
\begin{equation}\label{eq:phistab2}
\phi=\Theta(z)\int\alpha\phi
\end{equation}
which has a nontrivial solution only if $\int\alpha\phi\neq 0$, so we can assume $\int\alpha \phi=1$.
Note that $\Theta(z)\in\cB_2$ since $h\in\cB_3$.
Multiplying equation \eqref{eq:phistab2} by $\alpha$ and integrating yields
\begin{equation}\label{eq:thetaxi}
\Xi(z)=1.
\end{equation}
Conversely, if $\Xi(z)=1$, then $\phi=\Theta(z)$ is a solution of \eqref{eq:phistab2}, and, by \eqref{eq:LT_der},
\begin{equation}\label{eq:eigenvector}
\cD_h\Theta(z)=\cL_{T_{\nu,h}}\Theta(z)+(z\Id-\cL_{T_{\nu,h}})\Theta(z)=z\Theta(z).
\end{equation}
It follows that, for $z\neq 1$, $z\in\sigma(\cD_h)\setminus \sigma(\cL_{T_{\nu,h}})$ iff $\Xi(z)=1$.

Note that, by the equation \eqref{eq:Thetaz}, for $z\in\bR$, we have $\Xi(z)\in\bR$, moreover $\lim_{z\to\infty}\Xi(z)=0$. Accordingly, if $\Xi(1)>1$, there must be a $z_1\in(1,\infty)$ such that $\Xi(z_1)=1$ and $z_1\in\sigma(\cD_h)$, moreover \eqref{eq:phistab2} shows that there is a unique eigenvalue, although there could be a finite-dimensional Jordan block. Also, since $\Xi$ is analytic, there can be at most finitely many such values. On the other hand, if  $\sup_\theta |\Xi(e^{i\theta})|<1$, then, since $\Xi$ is holomorphic outside the unit disk and goes to zero at infinity, we have $\sup_{|z|\geq 1} |\Xi(z)|<1$. Accordingly, all the eventual solutions of \eqref{eq:thetaxi} must be strictly inside the unit disk.
\end{proof}
Before proving our main result, we need one more fact.
\begin{lemma} \label{lem:weak}
$\widetilde \cL_{\nu}:\cK\to\cK$ is weakly continuous.\footnote{ That is, here we view $\cK$ as a subset of the space of probability measures.}
\end{lemma}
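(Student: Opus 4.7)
The plan is a direct check of the sequential definition of weak continuity, exploiting the explicit form of the coupling $T_{\nu,h}(x) = T(x) + \nu\beta(x) h(\alpha)$. Since $\cK$ sits inside the space of probability measures on the compact metric space $\bT^d$, the weak topology there is metrizable, so it suffices to prove sequential continuity.

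Fix a sequence $h_n \in \cK$ with $h_n \to h$ weakly, and let $\phi \in C^0(\bT^d,\bR)$. By definition of the pushforward,
\begin{equation*}
\int \phi\, d\widetilde\cL_\nu(h_n) - \int \phi\, d\widetilde\cL_\nu(h) = \int \phi\circ T_{\nu,h_n}\, dh_n - \int \phi\circ T_{\nu,h}\, dh,
\end{equation*}
and I would split this into
\begin{equation*}
\int \bigl(\phi\circ T_{\nu,h_n} - \phi\circ T_{\nu,h}\bigr)\, dh_n \;+\; \int \phi\circ T_{\nu,h}\, d(h_n - h).
\end{equation*}
The second integral tends to zero immediately by weak convergence of $h_n$ to $h$, since $\phi\circ T_{\nu,h}$ is continuous on $\bT^d$.

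For the first integral, the key observation is that $\alpha \in C^\infty(\bT^d,\bR) \subset C^0(\bT^d,\bR)$, so weak convergence yields $h_n(\alpha) \to h(\alpha)$. From the explicit definition \eqref{eq:coupledmap}, for every $x \in \bT^d$,
\begin{equation*}
d_{\bT^d}\bigl(T_{\nu,h_n}(x),\, T_{\nu,h}(x)\bigr) \leq |\nu|\,\|\beta\|_\infty\,|h_n(\alpha) - h(\alpha)| \longrightarrow 0
\end{equation*}
uniformly in $x$. Since $\phi$ is uniformly continuous on the compact torus, this implies $\phi\circ T_{\nu,h_n} \to \phi\circ T_{\nu,h}$ uniformly, and together with $h_n(\bT^d)=1$ the first integral also vanishes.

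There is no genuine obstacle here: the argument reduces to the weak convergence $h_n(\alpha)\to h(\alpha)$ (used to control the map-dependence on the measure) and the weak convergence against the continuous test function $\phi\circ T_{\nu,h}$. Both uses are legitimate precisely because $\alpha$ is smooth and $T_{\nu,h}$ is continuous. The only mild subtlety to remember is the mod $1$ reduction, which only strengthens the estimate since distances on $\bT^d$ are bounded by distances in the lift.
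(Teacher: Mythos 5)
Your proof is correct and follows essentially the same route as the paper: weak convergence of the measures forces $T_{\nu,h_n}\to T_{\nu,h}$ uniformly via $h_n(\alpha)\to h(\alpha)$, and then one splits the difference of integrals into a term controlled by this uniform convergence and a term controlled by weak convergence against the fixed continuous function $\phi\circ T_{\nu,h}$. The only (harmless) difference is that you work directly with $\phi\in\cC^0$ using uniform continuity on the compact torus, whereas the paper first argues for $\phi\in\cC^1$ and then extends to continuous test functions by approximation using the uniform bound $\left|\int\widetilde\cL_\nu(\tilde g)\phi\right|\le\|\phi\|_\infty$.
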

\begin{proof}
If $g_n\Longrightarrow g$, weakly, then, for all $\phi\in\cC^1$ we have\footnote{Recall that by the weak convergence of the $g_n$ and the definition \eqref{eq:coupledmap} follows $\lim_{n\to\infty}\|T_{\nu,g_n}-T_{\nu,g}\|_{\cC^1}=0$.}
\[
\lim_{n\to\infty}\int\widetilde \cL_{\nu}(g_n)\phi-\widetilde \cL_{\nu}(g)\phi=\lim_{n\to\infty}\int \phi\circ T_{\nu,g_n} g_n-\phi\circ T_{\nu,g}g=0.
\]
The lemma follows since, for each $\tilde g\in\cK$,
\[
\left|\int\widetilde \cL_{\nu} (\tilde g)\phi\right|\leq \|\phi\|_\infty,
\]
so the result for all continuous functions can be obtained by approximation.
\end{proof}
\begin{proof}[\bf \em Proof of Theorem \ref{lem:physical}]\ \\
 If $h$ is visible, let $g\in\cC^\infty$ be such that $\widetilde \cL_{\nu}^n(g)$ converges weakly to $h$. Then, by  Assumptions (A\ref{ass:6}), (A\ref{ass:1}) for all $n\in\bN$,
\[
\|\widetilde \cL_{\nu}^ng\|_{\cB_3}\leq C_*\vartheta^n\|g\|_{\cB_3}+K\|g\|_{\cB_2}\leq (C_*+K)\|g\|_{\cC^{\bar r}}=:C_g.
\]
Since $\{\vf\in\cB_3\;:\; \|\vf\|_{\cB_3}\leq C_g\}$ is relatively compact in $\cB_2$ by Assumption  (A\ref{ass:3}), there exists a sequence $n_j$ such that $\widetilde \cL_{\nu}^{n_j}(g)$ converges to some $h_*\in\cB_2$. 
Thus, for all $\phi\in\cC^{\bar r}$ we have, recalling Assumption (A\ref{ass:1}),
\[
\int h_*\phi=\lim_{j\to\infty}\int \widetilde \cL_{\nu}^{n_j}(g)\phi=\int \phi h.
\]
By the embedding properties of the Banach spaces, $h=h_*\in\cB_2$. Hence, $\cL_{\nu,h}h=h$, so by Lemma \ref{lem:spectra} $h\in\cB_3$. The formula for the derivative is then just \eqref{eq:LT_der}, and, 
recalling \eqref{eq:one_eigen},
\begin{equation}\label{eq:Rspec}
\begin{split}
\cD_h(\phi)&=\left\{h+  \Theta(1)[1-\Xi(1)]^{-1}\int\alpha h \right\}\int \phi +\cR(\phi)\\
&=\Pi_0\phi+\cR(\phi)
\end{split}
\end{equation}
where $\Pi_0^2=\Pi_0$, and $\Pi_0\cR=\cR\Pi_0=0$, in particular $\cR(\cB_1)\subset \bV$.

In the first case of the Theorem we have $\sigma(\cR)\subset \{z\in\bC\;:\; |z|<\kappa\}$.
Thus, there exist $A\geq 1$ and $\varrho\in(0,1)$ such that, for all $n\in \bN$, $\|\cR^n\|_{\cB_1}\leq A\varrho^n$.
Let $\bar m\geq \bar n$ to be chosen later. Let $J, H$ be as in Lemma \ref{lem:chain-rule}. For $g\in T\cK$ such that $\|g\|_{\cB_2}\leq \frac 14 J^{-1}H^{-\bar m}$, we have, recalling Lemma \ref{lem:chain-rule},
\[
\begin{split}
\|\widetilde \cL_{\nu}^{\bar m}(h+g)-h\|_{\cB_1}&=\|\widetilde \cL_{\nu}^{\bar m}(h+g)-\widetilde \cL_{\nu}^{\bar m}(h)\|_{\cB_1}\\
&\leq \|\cD_h^{\bar m}(g)\|_{\cB_1}+H^{\bar m}\|g\|_{\cB_2}\|g\|_{\cB_1}\\
&=\|\cR^{\bar m}(g)\|_{\cB_1}+H^{\bar m}\|g\|_{\cB_2}\|g\|_{\cB_1}\\
&\leq \frac 34\|g\|_{\cB_1},
\end{split}
\]
provided $\bar m$ has been chosen large enough.
Moreover, setting $\varrho_*=\max\{\vartheta, \varrho\}$,
\[
\begin{split}
\|\widetilde \cL_{\nu}^{\bar m}(h+g)-h\|_{\cB_2}&\leq J\left( H^{\bar m}\|g\|_{\cB_2} \|g\|_{\cB_1}+ JA\sum_{j=0}^{\bar m-1}\vartheta^{\bar m-j}\varrho^j\|g\|_{\cB_1}+\vartheta^{\bar m} \|g\|_{\cB_2}\right)\\
&\leq \frac 14 \|g\|_{\cB_2}+JA\varrho_*^{\bar m}\bar m\|g\|_{\cB_1}\leq \frac 14 J^{-1}H^{-\bar m}
\end{split}
\]
provided $\bar m$ is large enough. Hence, setting $g_k=\widetilde \cL_{\nu}^{k}(h+g)-h$, we have $\|g_{\bar m}\|_{\cB_1}\leq \frac 34\|g\|_{\cB_1}$ and $\|g_{\bar m}\|_{\cB_2}\leq\frac 14 J^{-1}H^{-\bar m}$, we can thus iterate the estimate.
Accordingly, for all $\|g\|_{\cB_2}\leq \frac 14 J^{-1}H^{-\bar m}$, 
\begin{equation}\label{eq:stconv}
\lim_{n\to\infty}\|\widetilde \cL_{\nu}^n(h+g)-h\|_{\cB_1}=0.
\end{equation}
Since $\cC^\infty\subset \cB_2$, by \eqref{eq:stconv} and  (A\ref{ass:1}), it follows that for an open set of densities in $\cC^\infty$ the sequence of measures converges weakly to $h$. Hence, $h$ is a physical measure. 

On the contrary, if   $\sigma(\cD_h)\setminus\{z\in\bC\;:\; |z|\leq 1\}\neq \emptyset$, by the spectral decomposition, we can further refine the decomposition in \eqref{eq:Rspec} as
\begin{equation}\label{eq:der_split}
\cR(\phi)=A(\phi)+B(\phi),
\end{equation}
$AB=BA=0$, $A$ is finite rank and $\sigma(A)\subset \{z\in\bC\;:\; |z|>\kappa^{-1}\}\cup\{0\}$, $\kappa<1$, while $\sigma(B)\subset \{z\in\bC\;:\; |z|<\kappa\}$.
Let $\bV_A=\operatorname{Range(A)}$, $\bV_B=\operatorname{Range(B)}$, and $\bV_0=\operatorname{Range(\Pi_0)}$, then $\cB_1=\bV_A\oplus\bV_0\oplus\bV_B$. 
Next, notice that equation \eqref{eq:LT_der} implies, for each $\tilde g,\tilde\phi\in\cB_1$,
\begin{equation}\label{eq:V_der_inv}
\int D_{\tilde g}\widetilde \cL_{\nu}(\tilde \phi)=\int \tilde \phi,
\end{equation}
which implies that $\bV=\bV_A\oplus\bV_B$.

Let $\Pi_A\in L(\cB_1,\cB_1)$ the the projection of on $\bV_A$ along $\bV_0\oplus\bV_B$, i.e. $\Pi_A^2=\Pi_A$ and $\Pi_A\cD_h=A \Pi_A$.
Fix $\kappa_1\in (\kappa,1)$ and let $\bar m\in\bN$ be such that $\|A^{\bar m}g\|_{\cB_1}\geq \kappa_1^{-\bar m}\|g\|_{\cB_1}$, for all $g\in\bV_A$,  and $\|B^{\bar m}\|_{\cB_1}\leq \kappa_1^{\bar m}$.

We argue by contradiction. Assume that $h$ is physical. That is, there exists a $\cC^\infty$ open set $\cA$ that converges weakly to $h$. Note that, by eventually restricting $\cA$, we can assume that $\sup_{g\in\cA}\|g\|_{\cC^{\bar r}}<\infty$. 

\begin{lemma}\label{lem:enterBe}
 For each $\nu\in [0,\nu_*]$ and $g\in\cA$, we have
\[
\lim_{n\to\infty}\|\widetilde \cL_{\nu}^ng-h\|_{\cB_2}=0.
\]
Moreover, there exists $\Const>0$ such that, for each $\ve>0$, there exists $m_\ve\in\bN$ such that, for each  $n\geq m_\ve$, $\widetilde \cL_{\nu}^{n}\cA\subset \bB_\ve$,  where 
\[
\bB_\ve:=\{g\in\cB_2\cap\bW\;:\; \|g-h\|_{\cB_2}\leq 2\Const C_2 \ve \ln\ve^{-1}\}.
\]
\end{lemma}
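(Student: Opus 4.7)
The strategy is to combine two inputs---(i) uniform weak convergence from the assumed physicality of $h$ and (ii) a uniform a priori bound in $\cB_3$ coming from the non-autonomous Lasota--Yorke inequality (A\ref{ass:6})---and then pattern the quantitative step on the proof of Lemma \ref{lem:H-prop} to obtain the logarithmic loss. By possibly shrinking $\cA$ we may assume that $C_2:=\sup_{g\in\cA}\|g\|_{\cC^{\bar r}}<\infty$ and that elements of $\cA$ are themselves probability densities (otherwise we pass to $g/\int g$). Iterating (A\ref{ass:6}) along the non-autonomous composition $\widetilde\cL_\nu^n g=\cL_{T_{\nu,g_{n-1}}}\cdots\cL_{T_{\nu,g_0}}g$, with $g_k:=\widetilde\cL_\nu^k g$, produces a uniform bound
\[
\sup_{g\in\cA,\,n\ge 0}\|g_n\|_{\cB_3}\le (C_*+K)C_2=:R.
\]

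For the first (pointwise) assertion, fix $g\in\cA$. The sequence $\{g_n\}$ is bounded in $\cB_3$, hence relatively compact in $\cB_2$ by Assumption (A\ref{ass:3}). Since Definition \ref{def:physical} gives $g_n\Rightarrow h$ weakly and the embeddings \eqref{eq:embedding} identify $\cC^{\bar r}$ as a subset of $\cB_2'$, any $\cB_2$-accumulation point of $\{g_n\}$ agrees with $h$ on all $\cC^{\bar r}$-test functions and therefore equals $h$. A standard subsequence argument then gives $\|g_n-h\|_{\cB_2}\to 0$.

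For the uniform quantitative statement, telescope using $h=\cL_{T_{\nu,h}}h$:
\[
g_{n+m}-h=\cL_{T_{\nu,g_{n+m-1}}}\!\cdots\cL_{T_{\nu,g_m}}(g_m-h)+\sum_{k=0}^{n-1}\cL_{T_{\nu,g_{n+m-1}}}\!\cdots\cL_{T_{\nu,g_{m+k+1}}}\bigl(\cL_{T_{\nu,g_{m+k}}}-\cL_{T_{\nu,h}}\bigr)h.
\]
The first term lies in $\bV$ and by Assumption (A\ref{ass:4}) is bounded in $\cB_2$ by $\vartheta^n\|g_m-h\|_{\cB_2}\le 2\vartheta^n R$. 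For each summand, parametrising $T_{\nu,(1-s)h+sg_{m+k}}$ in $s\in[0,1]$ and applying Lemma \ref{lem:regularity} gives
\[
\bigl\|(\cL_{T_{\nu,g_{m+k}}}-\cL_{T_{\nu,h}})h\bigr\|_{\cB_1}\le C_\star|\nu|\,\|\beta\|_{\cC^{\bar r}}\|\alpha\|_{\cC^{\bar r}}\|g_{m+k}-h\|_{\cB_0}\|h\|_{\cB_2};
\]
composing with the outer transfer operators via (A\ref{ass:4}) and (A\ref{ass:6}), each such contribution to the $\cB_2$-norm is at most $C\vartheta^{n-k-1}R\,\|g_{m+k}-h\|_{\cB_0}$ plus a harmless boundary correction. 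Summing the geometric series yields
\[
\|g_{n+m}-h\|_{\cB_2}\le 2R\vartheta^n+\frac{CR}{1-\vartheta}\sup_{k\ge m}\|g_k-h\|_{\cB_0}.
\]
The uniform weak convergence from Definition \ref{def:physical}, combined with the uniform $\cB_3$ bound and the compact embedding $\cB_3\hookrightarrow\cB_0$ (A\ref{ass:3}), yields, for each $\ve>0$, some $m'_\ve$ with $\sup_{g\in\cA,\,k\ge m'_\ve}\|g_k-h\|_{\cB_0}\le\ve$. Setting $n=\bigl\lceil\tfrac{\ln\ve^{-1}}{\ln\vartheta^{-1}}\bigr\rceil$ balances the two terms and produces the claimed bound $\|g_{n+m'_\ve}-h\|_{\cB_2}\le\Const\,C_2\,\ve\ln\ve^{-1}$, uniformly in $g\in\cA$; choosing $m_\ve:=m'_\ve+n$ completes the proof.

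The main obstacle is the quantitative upgrade from uniform weak convergence to uniform $\cB_0$ convergence: the compact embedding gives only qualitative convergence, so one has to interpolate the uniform $\cB_3$ bound against the weak-convergence rate to extract an explicit rate, and it is precisely this interpolation---together with the spectral-gap logarithmic balance---that is responsible for the $\ve\ln\ve^{-1}$ loss appearing in the definition of $\bB_\ve$.
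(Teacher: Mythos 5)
Your argument is correct and reaches the stated conclusion, but the quantitative step is organized differently from the paper's. The paper does not telescope the linear operators; it applies the mean value theorem for the G\^ateaux differential of the full nonlinear operator, writing $\|g_n-h\|_{\cB_2}=\|\widetilde\cL_\nu(g_{n-1})-\widetilde\cL_\nu(h)\|_{\cB_2}$ as an integral of $\cD_{h+t(g_{n-1}-h)}(g_{n-1}-h)$. The point of that choice is that the error term at each step is exactly $\nu\divv\cL_{T_{\nu,\cdot}}\beta[\cdot]\int\alpha(g_{n-1}-h)$, so it is controlled by $\left|\int\alpha(g_{n-1}-h)\right|<\ve$, i.e.\ by weak convergence tested against the \emph{single} observable $\alpha$ --- which is immediate from Definition \ref{def:physical}. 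These errors accumulate linearly over the $n\sim\ln\ve^{-1}$ steps needed for the contraction, which is precisely where the paper's $\ve\ln\ve^{-1}$ comes from. Your telescoping of $\cL_{T_{\nu,g_{n+m-1}}}\cdots\cL_{T_{\nu,g_m}}$ against $\cL_{T_{\nu,h}}^n$, combined with (A\ref{ass:4}) on $\bV$, instead sums the errors geometrically and actually yields the stronger bound $O(\ve)$ (your $\ln\ve^{-1}$ factor is vacuous); the price is that you must control $\|g_{m+k}-h\|_{\cB_0}$ rather than $\int\alpha(g_{m+k}-h)$.

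That price is the one under-justified point. The upgrade from $\sup_{g,k\ge m'_\ve}d_w(g_k,h)\to 0$ to $\sup_{g,k\ge m'_\ve}\|g_k-h\|_{\cB_0}\le\ve$ does hold, but not by ``interpolating to extract an explicit rate'': no rate is available. The correct justification is purely qualitative --- the orbit closure lies in a $\cB_3$-ball intersected with the probability measures, which is compact in $\cB_0$ by (A\ref{ass:3}); on this compact set the identity from the $\cB_0$ topology to the weak topology is a continuous bijection, hence a homeomorphism, so its inverse is uniformly continuous and uniform weak smallness forces uniform $\cB_0$ smallness. You should state this argument explicitly, and drop the claim that this step is responsible for the logarithm: in your version it contributes no rate at all (the lemma only asks for existence of $m_\ve$, so this is harmless), and in the paper's version the logarithm comes from the linear accumulation of per-step errors, not from any weak-to-strong interpolation.
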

\begin{proof}
By \eqref{eq:embedding} $C_1:=\sup_{g\in\cA}\|g\|_{\cB_3}\leq \sup_{g\in\cA}\|g\|_{\cC^{\bar r}}<\infty$. By Assumption (A\ref{ass:6}), for all $n\in\bN$ and $g\in\cA$,
\[
\|\widetilde \cL_{\nu}^ng\|_{\cB_3}\leq C_*\vartheta^n\|g\|_{\cB_3}+K\|g\|_{\cB_2}\leq (C_*+K)C_1.
\]
Let $C_2=\max\{\|h\|_{\cB_3},(C_*+K)C_1\}$.  
For each $\ve>0$, we defined the weak $\ve$-neighborhood
\[
\cU_\ve=\left\{g\in\cC^0\;:\; \left|\int \alpha (h-g)\right|<\ve\right\}.
\]
Next, by our hypothesis and recalling Definition \ref{def:physical}, for each $\ve>0$ there exists $n_{\ve}\in\bN$ such that 
$\widetilde \cL_{\nu}^n(\cA)\in\cU_{\ve/2}$, for $n\geq n_{\ve}$. Then, for all $m\geq n_\ve$, $n\in\{0,\dots, C_4\ln\ve^{-1}\}$ and $g\in\widetilde \cL_{\nu}^m(\cA)$, let $g_n=\widetilde \cL_{\nu}^{m+n}g$.\\
By equation \eqref{eq:LT_der} and using the Mean value theorem for G\^ateaux differentials \cite[Theorem 3.2.6]{DM13},
\[
\begin{split}
&\|g_{n}-h\|_{\cB_2}=\|\widetilde \cL_{\nu}(g_{n-1})-\widetilde \cL_{\nu}(h)\|_{\cB_2}\leq \int_0^1dt\Big\|\cL_{T_{\nu,h+t(g_{n-1}-h)}}(g_{n-1}-h)\\
&-\nu\divv \cL_{ T_{\nu,h+t(g_{n-1}-h)}}\beta \left[h+t(g_{n-1}-h)\right]\int\alpha(g_{n-1}-h)\Big\|_{\cB_2}\\
&\leq \int_0^1dt \Big\|\cL_{T_{\nu,h+t(g_{n-1}-h)}}(g_{n-1}-h)\Big\|_{\cB_2}+\Const C_2\ve\\
&\leq \int_{[0,1]^n}dt_1\cdots dt_n\Big\|\cL_{T_{\nu,h+t_1(g_{n-1}-h)}}\cL_{T_{\nu,h+t_2(g_{n-2}-h)}}\cdots  \cL_{T_{\nu,h+t_n(g-h)}}(g-h)\Big\|_{\cB_2}\\
&\phantom{=}
+\Const C_2\ve n.
\end{split}
\]
Then, for $\ve$ small enough and $n\geq \bar n$, by Assumption (A\ref{ass:4}) we have
\[
\|g_{n}-h\|_{\cB_2}\leq 2 \vartheta^{n}C_2+\Const C_2  n\ve.
\]
It follows that, choosing $C_4$ large enough, we have
\[
\|g_{C_4\ln\ve^{-1}}-h\|_{\cB_2}\leq 2\Const C_2 \ve \ln\ve^{-1}.
\]
Hence, for all $k\geq n_\ve+C_4\ln\ve^{-1}$ we have $\widetilde \cL_{\nu}^k(\cA)\subset \bB_\ve$.
\end{proof}
Let  $\widetilde \bB_\ve=\bB_\ve\bigcap \widetilde \cL_{\nu}^{m_\ve}\cA$, where $m_\ve$ and $\bB_\ve$ are defined in the statement of Lemma \ref{lem:enterBe}, so that $\widetilde\cL^n(\widetilde \bB_\ve)\subset  \bB_\ve$ for all $n\in\bN$. \\
First of all, we need information on the structure of $\cD_g$ for $g\in \bB_\ve$.
Recall the definition of $\Pi_0$ and $\Pi_A$ just after equation \eqref{eq:V_der_inv}, setting $\Pi_{E}=\Id-\Pi_A$, $\Pi_E\cD_h=:E$, we have 
$\cD_h=A+E$, with $A$ expanding and $E$ power bounded. Also, for each $a>1$, we define the cone

\[
\cC_a=\left\{\xi\in\cB_2\;:\; \|(\Id-\Pi_A)\xi\|_{\cB_1}\leq  \frac 1a\|\Pi_A\xi\|_{\cB_1}\right\},
\]
\begin{lemma}\label{lem:hyper}
There exists $\ve_0, \varrho\in(0,1)$, and $m\in\bN$ such that, for each $\ve\le \ve_0$ and $g\in\widetilde \bB_\ve$, letting $g_k=\widetilde \cL_\nu^kg$, we have $\cD_{g_{m-1}}\cdots \cD_g\cC_2\subset \cC_4$ and, for all $\xi\in\cC_2$,
\[
\|\cD_{g_{m-1}}\cdots \cD_g\xi\|_{\cB_1}\geq \varrho^m\|\xi\|_{\cB_1}.
\]
\end{lemma}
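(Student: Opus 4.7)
The plan is to first prove cone invariance and expansion for the linearized operator $\cD_h^m$ via the spectral decomposition of Proposition~\ref{prop:stability}, and then to transfer these properties to the variable product $\cD_{g_{m-1}}\cdots \cD_{g_0}$ by a perturbation argument that combines the $\cB_2$-closeness of the orbit $\{g_k\}$ to $h$ from Lemma~\ref{lem:enterBe} with the continuity bounds of Lemma~\ref{lem:regularity}.

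For the linearized step, since $\Pi_A$ and $\Pi_E$ commute with $\cD_h$ and project onto complementary invariant subspaces, one has the block decomposition $\cD_h^m\xi=A^m\Pi_A\xi+E^m\Pi_E\xi$. Finite-dimensionality of $\bV_A$ together with $\sigma(A)\subset\{|z|>\kappa^{-1}\}\cup\{0\}$ yields $\|A^m\Pi_A\xi\|_{\cB_1}\geq \kappa_1^{-m}\|\Pi_A\xi\|_{\cB_1}$ for $m\geq \bar m$ and some $\kappa_1\in(\kappa,1)$, while $E$ acts as the identity on the one-dimensional $\bV_0$ and as the strict contraction $B$ on $\bV_B$, so $\|E^n\|_{\cB_1}\leq C_E$ uniformly in $n$. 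For $\xi\in\cC_2$ the defining inequality $\|\Pi_E\xi\|_{\cB_1}\leq \tfrac12\|\Pi_A\xi\|_{\cB_1}$ then forces $\|\Pi_E\cD_h^m\xi\|_{\cB_1}\leq \tfrac{C_E}{2}\|\Pi_A\xi\|_{\cB_1}$; enlarging $m$ so that $\kappa_1^{-m}\geq 16 C_E$ produces $\cD_h^m\cC_2\subset \cC_8$ together with the expansion $\|\cD_h^m\xi\|_{\cB_1}\geq \tfrac13\kappa_1^{-m}\|\xi\|_{\cB_1}$.

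For the perturbation step, Lemma~\ref{lem:regularity} combined with the uniform bound $\|g_k\|_{\cB_3}\leq C_2$ from Lemma~\ref{lem:enterBe} yields the continuity estimate $\|(\cD_{g_k}-\cD_h)\psi\|_{\cB_1}\leq C\|g_k-h\|_{\cB_0}\|\psi\|_{\cB_2}\leq C'\ve\ln\ve^{-1}\|\psi\|_{\cB_2}$. The telescoping identity
\[
\cD_{g_{m-1}}\cdots \cD_{g_0}-\cD_h^m=\sum_{k=0}^{m-1}\cD_{g_{m-1}}\cdots \cD_{g_{k+1}}(\cD_{g_k}-\cD_h)\cD_h^k,
\]
together with Lasota-Yorke bounds from Assumption~(A\ref{ass:6}) controlling the left products $\cD_{g_{m-1}}\cdots \cD_{g_{k+1}}$ in $L(\cB_1,\cB_1)$ and controlling the $\cB_2$-norm of the intermediate iterates $\cD_h^k\xi$ (immediate on the finite-dimensional $\bV_A$ by norm equivalence, and on $\operatorname{Range}(\Pi_E)$ via the Lasota-Yorke inequality), produces for $\xi\in\cC_2$ an error of the form $\|(\cD_{g_{m-1}}\cdots \cD_{g_0}-\cD_h^m)\xi\|_{\cB_1}\leq C_m\,\ve\ln\ve^{-1}\,\kappa_1^{-m}\|\xi\|_{\cB_1}$. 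Fixing $\ve_0$ small enough that this error is absorbed into the gap $\tfrac{1}{8}\kappa_1^{-m}\|\Pi_A\xi\|_{\cB_1}$ established in the linearized step preserves both the cone inclusion (relaxed from $\cC_8$ to $\cC_4$) and the lower bound $\|\cD_{g_{m-1}}\cdots \cD_{g_0}\xi\|_{\cB_1}\geq \varrho^m\|\xi\|_{\cB_1}$ for a suitable $\varrho\in(0,1)$. The main technical obstacle is the loss of regularity in the continuity estimate (only $L(\cB_2,\cB_1)$), which forces one to control the intermediate iterates $\cD_h^k\xi$ in $\cB_2$ rather than $\cB_1$: on $\bV_A$ this is automatic, but on $\operatorname{Range}(\Pi_E)$ one must use the Lasota-Yorke inequality to prevent the $\cB_2$-norm from blowing up, despite the lack of $\cB_2$-contraction arising from the identity action of $\Pi_0$ on $\bV_0$.
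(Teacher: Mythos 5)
Your linearized step (cone invariance and expansion for $\cD_h^m$ via the spectral splitting $A\oplus E$) is fine and matches the paper. The gap is in the perturbation step. The only continuity available for $\cD_{g_k}-\cD_h$ loses one space: $\|(\cD_{g_k}-\cD_h)\psi\|_{\cB_1}\leq C\ve\ln\ve^{-1}\|\psi\|_{\cB_2}$. Consequently each term of your telescoping sum is controlled by $\|\cD_h^k\xi\|_{\cB_2}$, and the Lasota--Yorke inequality only gives $\|\cD_h^k\Pi_E\xi\|_{\cB_2}\leq C\vartheta^k\|\Pi_E\xi\|_{\cB_2}+C'\|\Pi_E\xi\|_{\cB_1}$; for small $k$ (already for $k=0$) the term $\|\Pi_E\xi\|_{\cB_2}$ survives. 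Since the cone $\cC_2$ constrains only $\cB_1$-norms, the ratio $\|\xi\|_{\cB_2}/\|\xi\|_{\cB_1}$ is unbounded on $\cC_2$, so your claimed error bound $C_m\ve\ln\ve^{-1}\kappa_1^{-m}\|\xi\|_{\cB_1}$ does not follow: the true error is of order $\ve\ln\ve^{-1}\left(\|\xi\|_{\cB_2}+\|\xi\|_{\cB_1}\right)$, and the $\|\xi\|_{\cB_2}$ part cannot be absorbed into $\varrho^m\|\xi\|_{\cB_1}$ by shrinking $\ve$, uniformly over $\xi\in\cC_2$. This is precisely the obstruction you flag at the end, but the proposed remedy (norm equivalence on $\bV_A$ plus Lasota--Yorke on $\operatorname{Range}(\Pi_E)$) does not remove it.

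The paper avoids the telescoping altogether. It measures the perturbation one norm \emph{down}, $\|\left(\cD_{g_{m-1}}\cdots\cD_g-\cD_h^m\right)\xi\|_{\cB_0}\leq C^m\|g-h\|_{\cB_2}\|\xi\|_{\cB_1}$, so the error is controlled by the same norm that defines the cone; the price is that the comparison with $\cD_h^m$ now lives in the weak norm $\cB_0$. To recover a $\cB_1$ lower bound it proves, via functional calculus and the Lasota--Yorke inequality, that $\|\Pi_A\xi\|_{\cB_0}\geq b\|\Pi_A\xi\|_{\cB_1}$ on the finite-dimensional subspace $\bV_A$, and it invokes the Keller--Liverani spectral stability theorem \cite{KL99} to control the perturbed projectors $\Pi_{A,g},\Pi_{E,g}$ and the iterates of the product on the complementary part. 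If you want to keep a self-contained perturbative argument you must reorganize it along these lines; as written, the step ``fixing $\ve_0$ small enough that this error is absorbed'' fails.
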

\begin{proof}
Let us start by establishing a Lasota-Yorke type inequality. By Assumption (A\ref{ass:6}) we have, for some constant $C>1$ and $i\in\{1,2\}$
\begin{equation}\label{eq:DLY}
\begin{split}
\|\cD_{g_{m-1}}\cdots \cD_g\xi\|_{\cB_0}&\leq C^m\|\xi\|_{\cB_0}\\
\|\cD_{g_{m-1}}\cdots \cD_g\xi\|_{\cB_i}&\leq \|\cL_{T_{\nu,g_{m-1}}}\cdots \cL_{T_{\nu,g}}\xi\|_{\cB_i}+C^n\|\xi\|_{\cB_{i-1}}\\
&\leq  C_*\vartheta^m\|\xi\|_{\cB_i}+(C^m+K)\|\xi\|_{\cB_{i-1}}.
\end{split}
\end{equation}
If $m$ is such that $C_*\vartheta^m<\frac 12$, this can be iterated yielding the usual Lasota-Yorke inequality. Note that \eqref{eq:DLY} also implies a Lasota-Yorke inequality for $\cD_h$.
In addition, Lemma \ref{lem:regularity} implies that, for $C$ large enough,
\[
\|\cD_{g_{m-1}}\cdots \cD_g\xi-\cD_h^m\xi\|_{\cB_0}\leq C^m\|h-g\|_{\cB_2}\|\xi\|_{\cB_1}.
\]
By \cite[Theorem 1]{KL99}, it follows that the spectrum of $\cD_{g_{m-1}}\cdots \cD_g$ outside the unit disk is close to the one of $\cD_h^m$. Let $\Pi_{A,g}$ be the projector on the part of the spectrum of $\cD_{g_{m-1}}\cdots \cD_g$ outside the unit disk, and set $\Pi_{E,g}=\Id-\Pi_{A,g}$. Then by   \cite[Corollary 1]{KL99} such projectors are close to the eigenprojector $\Pi_A,\Pi_E$ of $\cD_h$. By analytic function calculus, we have, for each $n\in\bN$,
\[
\begin{split}
\|\Pi_A\xi\|_{\cB_1}&=\|\Pi_A^2\xi\|_{\cB_1}=\left\|\frac{1}{2\pi i}\int_\gamma (z-\cD_h)^{-1}\Pi_A\xi dz\right\|_{\cB_1}\\
&=\left\|\frac{1}{2\pi i}\int_\gamma z^{-1}\left[\sum_{k=0}^{n-1}z^{-k}\cD_h^k+z^{-n}(\Id-z^{-1}\cD_h)^{-1}\cD_h^n\right]\Pi_A\xi dz\right\|_{\cB_1}
\end{split}
\]
where $\gamma$ are two circles one of radius $\kappa^{-1}$ and the other of radius $R$ such that $\sigma(\cD_h)\subset\{z\in\bC\;:\; |z|\leq R\}$. Note that  $\int_\gamma z^{-1}\sum_{k=0}^{n-1}z^{-k}\cD_h^kdz=0$. By the Lasota-Yorke \eqref{eq:DLY} it follows that there exists $\Const>0$ such that
\[
\begin{split}
\|\Pi_A\xi\|_{\cB_1}&=\left\|\frac{1}{2\pi i}\int_\gamma z^{-n-1}(\Id-z^{-1}\cD_h)^{-1}\cD_h^n\Pi_A\xi dz\right\|_{\cB_1}\\
&\leq \Const \kappa^{n+1}\left(\vartheta^{n}\|\Pi_A\xi\|_{\cB_1}+(C^n+K)\|\Pi_A\xi\|_{\cB_0}\right).
\end{split}
\]
Hence, choosing $n$ so that $\Const \kappa^{n+1}\vartheta^{n}\leq 1/2$, we obtain that there exists $b>0$ such that 
\[
\|\Pi_A\xi\|_{\cB_0}\geq b\|\Pi_A\xi\|_{\cB_1}.
\]
Then  we have, for some constant $C_3>1$ and $m\geq \bar n$,
\[
\begin{split}
\|\cD_{g_{m-1}}\cdots \cD_g\Pi_A\xi\|_{\cB_1}&\geq \|\cD_{g_{m-1}}\cdots \cD_g\Pi_A\xi\|_{\cB_0}\\
&\geq \|\Pi_A\cD_h^m\Pi_A\xi\|_{\cB_0}-C^mC_3\ve\ln\ve^{-1}\|\Pi_A\xi\|_{\cB_1}\\
&\geq b\|\cD_h^m\Pi_A\xi\|_{\cB_1}-C^mC_3\ve\ln\ve^{-1}\|\Pi_A\xi\|_{\cB_1}\\
&\geq \left[b\kappa_1^{-m}-C^mC_3\ve\ln\ve^{-1}\right]\|\Pi_A\xi\|_{\cB_1}\geq \frac b2\kappa_1^{-m}\|\Pi_A\xi\|_{\cB_1}
\end{split}
\]
provided $\ve$ is chosen small enough. Moreover, by \cite[Corollary 2]{KL99}, for each $r\in (1,\kappa_1^{-1})$ there exist a constant  $C_r>1$ and $n_1\in\bN$, such that, for $m\geq n_1$,
\[
\begin{split}
\|\cD_{g_{m-1}}\cdots \cD_g\Pi_{E}\xi\|_{\cB_1}&\leq \|\cD_{g_{m-1}}\cdots \cD_g\Pi_{E,g}\xi\|_{\cB_1}+\|\cD_{g_{m-1}}\cdots \cD_g(\Pi_E-\Pi_{E,g})\xi\|_{\cB_1}\\
&\leq C_r r^m\|\Pi_{E}\xi\|_{\cB_1}+C_*\vartheta^m\|(\Pi_E-\Pi_{E,g})\xi\|_{\cB_1}\\
&\phantom{\leq}
+(C^m+K)\|(\Pi_E-\Pi_{E,g})\xi\|_{\cB_0}
\end{split}
\]
where in the last line, we have used \eqref{eq:DLY}. Next, \cite[Corollary 2]{KL99} implies that $\|(\Pi_E-\Pi_{E,g})\xi\|_{\cB_0}\leq(C^m+K)^{-1} C_r r^m$, provided $\ve$ is small enough, hence, for $\xi\in\cC_2$,
\[
\begin{split}
\|\cD_{g_{m-1}}\cdots \cD_g\Pi_{E}\xi\|_{\cB_1}&\leq 4 C_r r^m\|\Pi_{E}\xi\|_{\cB_1}\leq 2 C_r r^m\|\Pi_{A}\xi\|_{\cB_1}\\
&\leq \frac {\kappa_1^{m} C_r r^m} b \|\cD_{g_{m-1}}\cdots \cD_g\Pi_A\xi\|_{\cB_1}\\
&\leq \frac 14  \|\cD_{g_{m-1}}\cdots \cD_g\Pi_A\xi\|_{\cB_1}
\end{split}
\]
provided $m$ has been chosen large enough. This provides the invariance for $\cC$. To conclude the lemma, note that for $\xi\in\cC_2$
\[
\begin{split}
\|\cD_{g_{m-1}}\cdots \cD_g\xi\|_{\cB_1}&\geq \|\cD_{g_{m-1}}\cdots \cD_g\Pi_{A}\xi\|_{\cB_1}-\|\cD_{g_{m-1}}\cdots \cD_g\Pi_{E}\xi\|_{\cB_1}\\
&\geq \frac b2\kappa_1^{-m}\|\Pi_A\xi\|_{\cB_1}-4 C_r r^m\|\Pi_E\xi\|_{\cB_1}\\
&\geq  \left(\frac b2\kappa_1^{-m}-2 C_r r^m\right)\|\Pi_A\xi\|_{\cB_1}\geq \frac13\left(b\kappa_1^{-m}-4 C_r r^m\right)\|\xi\|_{\cB_1}.
\end{split}
\]
The above yields the results for $\varrho\in (\kappa_1,1)$, provided $m$ is large enough.
\end{proof}
We can now conclude.
Let $g, g+\phi\in \cL_{\nu}^{m_\ve}\cA$ such that $\phi\in\cC_2$.  Let $g_k=\widetilde\cL_\nu^k(g)$ and $\phi_k=\widetilde\cL_\nu^k(g+\phi)-\widetilde\cL_\nu^k(g)$. Then, Lemma \ref{lem:chain-rule} and Lemma \ref{lem:enterBe}, imply, for some constant $C_3>0$,
\[
\begin{split}
\|\Pi_A\phi_m\|_{\cB_1}\geq& \|\Pi_A\cD_{g_{m-1}}\cdots \cD_{g}\phi\|_{\cB_1}\\
&-\left\|\Pi_A\left[(\widetilde\cL_\nu^m(g+\phi)-\widetilde\cL_\nu(g))-\cD_{g_{m-1}}\cdots \cD_{g}\phi\right]\right\|_{\cB_1}\\
\geq& \|\Pi_A\cD_{g_{m-1}}\cdots \cD_{g}\phi\|_{\cB_1}-H^m\|\Pi_A\|_{\cB_1}\|\phi\|_{\cB_2}\|\phi\|_{\cB_1}.
\end{split}
\]
Note that if $\psi\in\cC_a$, then
\[
(1-a^{-1})\|\Pi_A\xi\|_{\cB_1}\leq \|\xi\|_{\cB_1}\leq (1+a^{-1})\|\Pi_A\xi\|_{\cB_1}.
\] 
Thus Lemma \ref{lem:hyper} implies
\begin{equation}\label{eq:expansion}
\begin{split}
\|\Pi_A\phi_m\|_{\cB_1}&\geq \frac 43\|\cD_{g_{m-1}}\cdots \cD_{g}\phi\|_{\cB_1}-H^m\|\Pi_A\|_{\cB_1}\|\phi\|_{\cB_2}\|\phi\|_{\cB_1}\\
&\geq \varrho^m\frac 45\|\phi\|_{\cB_1}-H^m\|\Pi_A\|_{\cB_1}\|\phi\|_{\cB_2}\|\phi\|_{\cB_1}\\
&\geq  \left(\frac 35 \varrho^m-H^mC_3\ve\ln\ve^{-1}\right)\|\Pi_A\phi\|_{\cB_1}\geq 2\|\Pi_A\phi\|_{\cB_1}
\end{split}
\end{equation}
provided $\ve$ is small enough. Moreover, as already remarked in the proof of Lemma \ref{lem:hyper}, there exists $r\in (1,\rho)$ such that
\[
\begin{split}
\|\Pi_E\phi_m\|_{\cB_1}&=\|\Pi_E(\widetilde\cL_\nu^m(g+\phi)-\widetilde\cL_\nu(g))\|_{\cB_1}\\
& \leq \|\Pi_E\cD_{g_{m-1}}\cdots \cD_{g}\phi\|_{\cB_1}+H^m\|\Pi_E\|_{\cB_1}\|\phi\|_{\cB_2}\|\phi\|_{\cB_1}\\
&\leq \frac 14\|\Pi_A\cD_{g_{m-1}}\cdots \cD_{g}\phi\|_{\cB_1}+H^m\|\Pi_E\|_{\cB_1}\|\phi\|_{\cB_2}\|\phi\|_{\cB_1}\\
&\leq  \frac 14\|\Pi_A\phi_m\|_{\cB_1}+2H^mC_3\ve\ln\ve^{-1}\|\Pi_A\phi\|_{\cB_1}\\
&\leq  \frac 14\|\Pi_A\phi_m\|_{\cB_1}+H^mC_3\ve\ln\ve^{-1}\|\Pi_A\phi_m\|_{\cB_1}\\
&\leq \frac 12\|\Pi_A\phi_m\|_{\cB_1}
\end{split}
\]
provided $\ve$ is small enough and where, in the line next to the last, we have used \eqref{eq:expansion}. Hence $\phi_m\in\C_2$, and the argument can be iterated.Thus, by \eqref{eq:expansion}
\[
\|\phi_{km}\|_{\cB_1}\geq \frac 12 \|\Pi_A \phi_{km}\|_{\cB_1}\geq 2^{k-1}\|\Pi_A\phi\|_{\cB_1}\geq \frac{2^k}3\|\phi\|_{\cB_1},
\]
which, for $k$ large, contradicts $g_{km}+\phi_{km}\in\bB_\ve$.

Therefore, $h$ is not a physical measure.
\end{proof}

\section{Example}\label{sec:example}
In this section, we present a nontrivial example to which our theory applies. More precisely, we provide an example of an infinite dimensional system where for very small coupling strength, the system admits a unique physical measure, while for not so small coupling strength, the system admits at least three physical measures, see Figure \ref{NewBirfucation} for an illustration. We also discuss which measures are physical. On the contrary, when considering the finite-dimensional version of our example with the same coupling strength, we show that the system admits a unique physical measure. Hence, the existence of multiple physical states is a purely infinite dimensional phenomenon, i.e., the systems exhibit a phase transition.

Let $A\in\operatorname{SL}(2,\bN)$ be a hyperbolic matrix with positive integer entries. Let  $T_0(x)=Ax\mod 1$ the corresponding linear automorphism with eigenvalues $\lambda,\lambda^{-1}$, $\lambda>1$.
Let $\rho\in \operatorname{Diff}^\infty(\bT^2)$, and $n_*\in\bN$, then $T=\rho \circ T_0^{n_*}\circ \rho^{-1}$ is an Anosov diffeormopfism of $\bT^2$.
If $J=\det(D\rho)$, then it can be checked by an explicit computation that $h_*=\left(J\circ \rho^{-1}\right)^{-1}$ is the SRB measure of $T$.
Let $\beta\in\bR^2$, $\beta=(\sin\vartheta,\cos\vartheta)$, $\vartheta\in(0,\pi)\cup(\pi,2\pi)$, and $\frac{\beta_1}{\beta_2}\not\in\bQ$.

For each $h\in\cC^\infty(\bT^2,\bR_+)$, $\int h=1$, we define
\footnote{ These are just special choices that allow us to exhibit an example for which we easily can perform explicit computation; many other choices are as good.} 
\begin{equation}\label{eq:exmap}
\begin{split}
&\alpha(x)=1-\cos\langle \beta,x\rangle\\
&\Phi_{\nu,h}(x)=x+\nu\beta\int\alpha h\\
&T_{\nu,h}(x)=\Phi_{\nu,h}\circ T(x) \quad\text{mod }1.
\end{split}
\end{equation}
Also, we assume
\begin{equation}\label{eq:rho}
\rho(x)=\rho_\mu(x)=x+\mu\chi(x)\mod 1,
\end{equation}
where $\chi(x)=(\cos \langle \beta,x\rangle, \sin \langle \beta,x\rangle)$, $\mu\in\bR$, to be chosen later.
\begin{figure}[ht]
   \centering
   \includegraphics [scale=0.44]{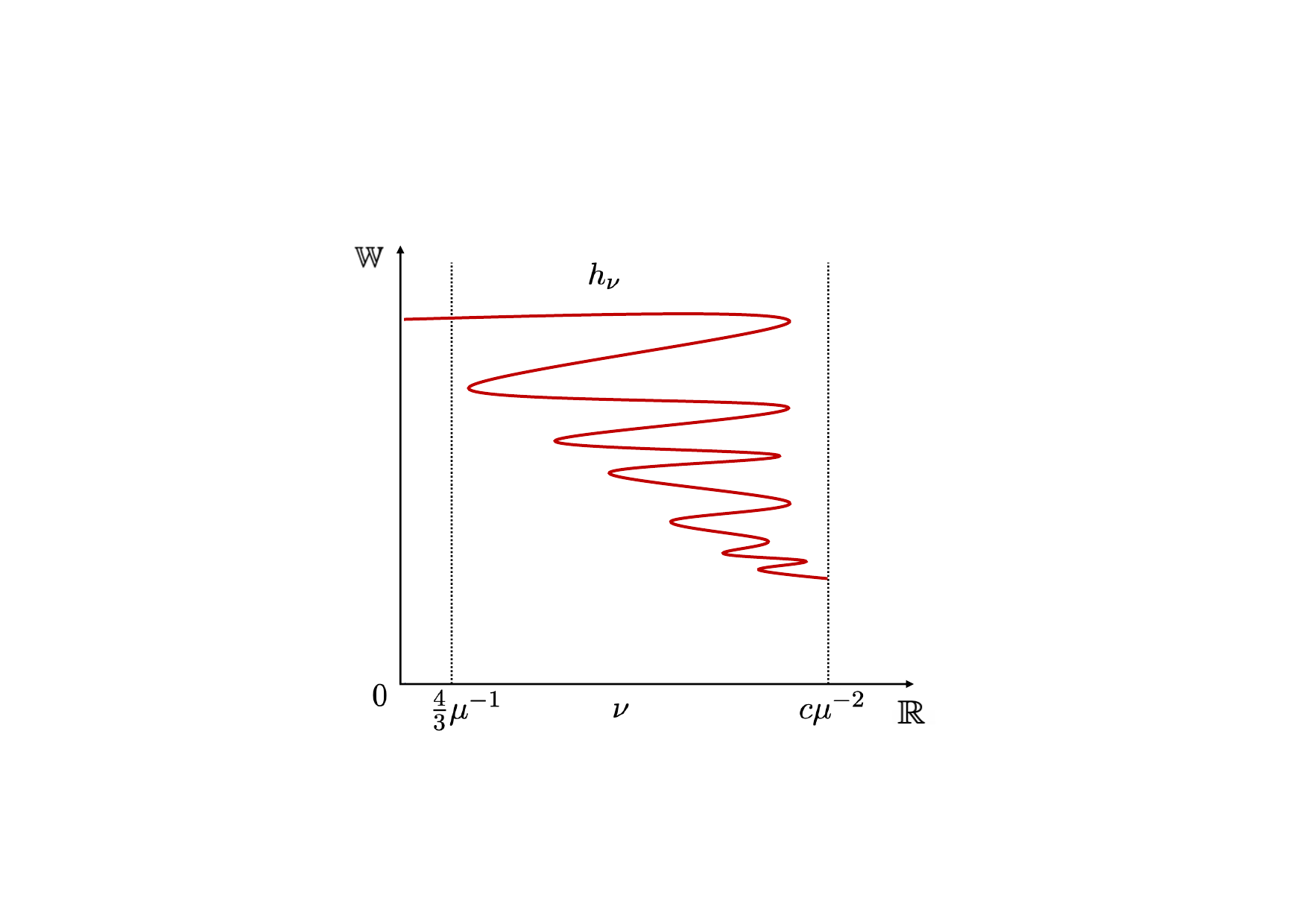} 
   \vskip -2cm
    \caption{Diagram corresponding to Proposition \ref{prop:exsols}.\\
     The red curve is the set of the invariant measures $h_\nu$.}   
\end{figure} 
The main result of this section is summarized in the following Proposition, whose proof is the content of Section \ref{sec:ex_bifurc}.
\begin{proposition}\label{prop:exsols}
There exist $\mu_\star, c_+,c>0$ such that for each $\mu\leq\mu_\star$, and $n_*\geq c_+\ln\mu^{-1}$ we have:
for $\nu\leq \frac 43 \mu^{-1}$ small enough the system defined in \eqref{eq:exmap} admits a unique physical measure. While, for each $\nu\in (2\mu^{-1},c\mu^{-2})$, but for finitely many values, the system admits at least $2m+1$, $m\geq 1$, invariant measures, only $m+1$ of which are physical measures.
\end{proposition}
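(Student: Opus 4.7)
The plan is to exploit the explicit conjugation $T = \rho\circ T_0^{n_*}\circ \rho^{-1}$ to compute all relevant spectral quantities perturbatively in $\mu$. For $\mu = 0$ one has $\rho=\Id$, $T = T_0^{n_*}$ linear, and Lebesgue is invariant under both $T_0^{n_*}$ and every constant translation; since $T_{\nu,h}(x) = T_0^{n_*}(x) + \nu\beta\int\alpha h$ is a translation of $T_0^{n_*}$, Lebesgue is the unique $\cL_{T_{\nu,1}}$-fixed density in $\bW$ for every $\nu$. Consequently $\divv\cL_T\beta\equiv 0$ at $\mu=0$ (because $\beta$ is constant and $T_0^{n_*}$ is area-preserving), so $\Theta\equiv 0$ and the bifurcation function $\Psi(\nu):=\nu\int\alpha\Theta(\nu,h_\nu)=\Xi(1)$ vanishes identically. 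Hence the leading behavior of $\Psi$ is of order $\mu$, and Lemma \ref{lem:regularity} applied to the family $s\mapsto \rho_{s\mu}$ yields a controlled expansion $\cL_T = \cL_{T_0^{n_*}} + \mu\cL_1+O(\mu^2)$ together with $h_* = 1+\mu\zeta+O(\mu^2)$, $\int\zeta=0$, that underpins all subsequent computations.

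Starting from the trivially invertible base point $(0,h_*)$, Lemma \ref{lem:noturn} produces a $\cC^1$ primary branch $\nu\mapsto h_\nu$ along which $\Psi(\nu)$ is the central object: its crossings of $1$ simultaneously detect the loss of invertibility of $D_hF$ (and hence bifurcations via Theorem \ref{thm:implicit}) and govern physicality through Theorem \ref{lem:physical}, since $\Xi(1)<1$ together with $\sup_\theta|\Xi(e^{i\theta})|<1$ yields stability, while $\Xi(1)>1$ forces a real eigenvalue of $\cD_h$ outside the unit disk. A direct Fourier computation on the linear part $T_0^{n_*}$, coupled with the $\mu$-expansion above, shows $\Psi(\nu) = \mu\nu\,\psi_1(\nu)+O\!\left(\mu^2\nu^2+\mu\lambda^{-n_*}\nu\right)$ with an explicit $\psi_1$ of unit order that oscillates on scales $O(1)$ in $\nu$, driven by the Fourier coupling of $\chi$ with $(A^T)^{-n_*}$-rotated modes and the shift by $\nu\beta\int\alpha h_\nu$.

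Three verifications complete the proof. First, for $\nu\leq \tfrac{4}{3}\mu^{-1}$, choosing $\mu$ sufficiently small yields $|\Psi(\nu)|<1$, so $D_hF$ remains invertible along the primary branch; Lemma \ref{lem:noturn} extends it uniquely and the compactness argument from the proof of Theorem \ref{thm:implicitg} excludes non-primary invariant measures in $\bW$. The unique resulting measure is physical because $\sup_\theta|\Xi(e^{i\theta})|$ is also $O(\mu\nu)<1$ on this range. Second, on $(2\mu^{-1},c\mu^{-2})$ the equation $\Psi(\nu)=1$ has finitely many non-degenerate solutions $\nu_1<\dots<\nu_m$; here the hypothesis $n_*\geq c_+\ln\mu^{-1}$ makes the error $\mu\lambda^{-n_*}\nu$ negligible compared to $\mu\nu\psi_1(\nu)$ across the whole interval, so the zeros of $\Psi-1$ are determined by $\psi_1$ alone. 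The non-degeneracy conditions \eqref{eq:nontrivial} (which reduces to $\int\alpha h_\nu=1+O(\mu)\neq 0$) and \eqref{eq:seconder} are verified directly from the Fourier expressions for $\Theta$ and $D_h^2F$ in Lemma \ref{lem:der}, yielding the hypotheses of Theorem \ref{thm:implicit} at each $\nu_i$ and a saddle-node at each. Third, Theorem \ref{thm:implicitg} assembles these into a single primary curve $\tau\mapsto(G(\tau),\bh(\tau))$ with $m$ turning points in $(0,c\mu^{-2})$, giving at least $2m+1$ invariant measures for every regular value of $G$ in the turning-point region.

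The physical-measure count follows by applying Theorem \ref{lem:physical} segment-by-segment along the primary curve: between consecutive turning points $\Xi(1)$ is of constant sign relative to $1$, and by Lemma \ref{lem:turn} this sign flips at each turning point. There are $m+1$ segments on which $\Xi(1)<1$ (hence physical, using the uniform bound $\sup_\theta|\Xi(e^{i\theta})|<1$ which persists on the relevant $\nu$-range for the same reason as above) and $m$ segments on which $\Xi(1)>1$ (hence non-physical, giving an unstable real eigenvalue). This yields the claimed $m+1$ physical measures out of $2m+1$. The main obstacle is the explicit Fourier/perturbation analysis: identifying $\psi_1(\nu)$, showing it has unit-order oscillations on $(2\mu^{-1},c\mu^{-2})$ whose $m$ zeros of $\mu\nu\psi_1(\nu)-1$ are non-degenerate, and controlling all errors uniformly. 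The irrationality hypothesis $\beta_1/\beta_2\notin\bQ$ is used here to preclude resonances between the shift direction $\beta$ and the integer dynamical lattice, ensuring that the trigonometric terms in $\psi_1$ genuinely oscillate rather than collapse.
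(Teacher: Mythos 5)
Your overall architecture is the same as the paper's: verify (A\ref{ass:2})--(A\ref{ass:4}) for the anisotropic spaces of \cite{GL06}, reduce the bifurcation analysis to the scalar function $\Gamma(\nu)=\nu\int\alpha\Theta(\nu,h_\nu)=\Xi(1)$, locate its crossings of $1$ (the turning points), verify (H1)--(H3) there, assemble the branches via Theorems \ref{thm:implicit} and \ref{thm:implicitg}, and decide physicality by tracking the solution of $\Xi(z)=1$ across the unit circle. The paper's computational engine is the observation that $n_*\geq c_+\ln\mu^{-1}$ forces $\|\cL_{T_0^{n_*-1}}g-\int g\|_{\cB_3}\leq\mu^3\|g\|_{\cB_3}$, so that $\cL_{T_{\nu,h}}$ is within $O(\mu^3)$ of a rank-one operator (equation \eqref{eq:Laction}); this is what makes $h_\nu$, $\omega=\int\alpha h_\nu$ and $\Xi$ explicit trigonometric expressions. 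Your ``Fourier computation on the linear part'' is the same mechanism in different clothing, but note that the irrationality of $\beta_1/\beta_2$ is used in Lemma \ref{lem:notzero} to guarantee $\Theta\not\equiv 0$, not to make $\psi_1$ oscillate: the oscillation comes entirely from the translation $\Phi_{-\nu,h}$ acting on $\cos\langle\beta,x\rangle$.

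The step that fails as written is the expansion $\Psi(\nu)=\mu\nu\,\psi_1(\nu)+O(\mu^2\nu^2+\mu\lambda^{-n_*}\nu)$ with $\psi_1$ an explicit function of $\nu$ alone. On $(2\mu^{-1},c\mu^{-2})$ one has $\mu\nu\geq 2$, hence $\mu^2\nu^2=(\mu\nu)\cdot\mu\nu\geq 2\mu\nu$, so the error term dominates the $O(\mu\nu)$ main term throughout the interval (and is of size $c^2\mu^{-2}$ near the right endpoint); the zeros of $\mu\nu\psi_1-1$ then carry no information. The $\nu^2$ is unavoidable if you linearize the phase: since $\omega=\int\alpha h_\nu=1+O(\mu)$ is only implicitly known, $\sin(\nu\omega-\vartheta)=\sin(\nu-\vartheta)+O(\mu\nu)$ and multiplying by the amplitude $\mu\nu$ produces exactly $O(\mu^2\nu^2)$. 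Lemma \ref{lem:singular} circumvents this by keeping the self-consistent phase inside the sine, $\Gamma(\nu)=\frac{\mu\nu}{2}\sin(\nu\omega-\vartheta)+O(\mu^2\nu)$, where the amplitude error is $O(c)$, and counting solutions of $\Gamma=1$ by an intermediate-value argument as the phase sweeps full periods. A related soft spot is your physicality count: the implication ``$\Xi(1)<1$ on a segment $\Rightarrow\sup_\theta|\Xi(e^{i\theta})|<1$ for the same reason as on $\nu\leq\tfrac43\mu^{-1}$'' is unavailable once $\mu\nu>2$, and if $\Xi(1)<-1$ the equation $\Xi(z)=1$ has a root near $\Xi(1)$ outside the unit disk, so that branch is not physical either. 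The paper instead argues locally at each turning point, computing $\partial_z\Xi(1)$ and $\partial_\tau\Xi(1)$ to show the second eigenvalue $z(\tau)=1-(1+O(\mu^2\nu_0))(\tau-\tau_0)+O((\tau-\tau_0)^2)$ crosses the unit circle there, so that exactly one of the two merging branches is physical.
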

The content of Proposition \ref{prop:exsols} is illustrated pictorially in Figure 3 where it is also shown that the curve on which the invariant measures lie has a lot of turning points. Indeed, by Lemma \ref{lem:singular}, the number of turning points is proportional to $\mu^{-2}$.\\
A natural question is if the existence of many physical measures is already present when considering a finite system or if it is a genuine property of the infinite system; what in physics is called a phase transition. In Section \ref{sec:phase}, we prove that we are indeed in the presence of a phase transition.

\subsection{The bifurcation diagram}\label{sec:ex_bifurc}\ \\
This section is devoted to the proof of Proposition \ref{prop:exsols}.
We start with the properties of the maps $T_{\nu,h}$; then we introduce the appropriate Banach spaces and prove that the general setting applies to the present case.
\begin{lemma}\label{lem:diffdist}
For each $n_*\in\bN$, there exists $\mu_*>0$ such that for each $\nu\geq 0$, $\mu\in [0,\mu_*)$, and $h\in\cC^\infty(\bT^2,\bR_+)$, $T_{\nu,h}$ is an  Anosov diffeomorphism with uniform\footnote{i.e., independent of $\nu$.} hyperbolicity.
 \end{lemma}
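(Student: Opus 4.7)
My plan is to exploit the very special structure of $T_{\nu,h}=\Phi_{\nu,h}\circ T$: since $\Phi_{\nu,h}(y)=y+c_{\nu,h}$ is a \emph{rigid translation} by the constant vector $c_{\nu,h}:=\nu\beta\int \alpha h$, its differential is the identity at every point, so $DT_{\nu,h}(x)=DT(x)$ uniformly in $x,\nu,h$. The task therefore reduces to establishing a $DT$-invariant hyperbolic cone field that is simultaneously \emph{translation-invariant}, so that it remains invariant after the post-composition by the rigid translation $\Phi_{\nu,h}$.

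First I will choose $\mu_*$ so that for $\mu\leq\mu_*$ the map $\rho_\mu(x)=x+\mu\chi(x)$ is a $\cC^\infty$ diffeomorphism of $\bT^2$. Since $D\rho_\mu=\Id+\mu D\chi$ and $\|D\chi\|_{\cC^0}<\infty$, taking $\mu_*<(2\|D\chi\|_{\cC^0})^{-1}$ guarantees that $D\rho_\mu$ and $D\rho_\mu^{-1}$ are within $O(\mu)$ of $\Id$ in operator norm uniformly. Consequently, writing
\[
DT(x)=D\rho_\mu\bigl(T_0^{n_*}\rho_\mu^{-1}(x)\bigr)\, A^{n_*}\, D\rho_\mu^{-1}(x),
\]
I see that $DT(x)$ is $A^{n_*}$ sandwiched between matrices $O(\mu)$-close to $\Id$, uniformly in $x$.

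Now let $E^u_0,E^s_0\subset\bR^2$ be the eigendirections of $A$ for the eigenvalues $\lambda,\lambda^{-1}$, and let $\cC^u,\cC^s$ denote the \emph{constant} cones of small half-angle $\theta$ around them, viewed as (translation-invariant) cone fields on $\bT^2$. I will verify, by a direct computation on $2\times 2$ matrices, that for $\mu$ small enough (depending on $\theta$) one has $DT(x)\cC^u\subset \operatorname{int}\cC^u$ with $|DT(x)v|\geq \tfrac{1}{2}\lambda^{n_*}|v|$ for $v\in\cC^u$, and symmetrically $|DT(x)v|\leq 2\lambda^{-n_*}|v|$ for $v\in\cC^s$, uniformly in $x$. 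Because $\cC^u,\cC^s$ are constant, the rigid translation $\Phi_{\nu,h}$ carries them to themselves, so
\[
DT_{\nu,h}(x)\cC^{u,s}=DT(x)\cC^{u,s}\subset\operatorname{int}\cC^{u,s}
\]
pointwise with the same constants, independent of $\nu$ and $h$. The standard cone-field criterion for hyperbolicity (together with the fact that $T_{\nu,h}$ is a $\cC^\infty$ bijection, being the composition of the diffeomorphism $T$ with a translation) then yields uniform hyperbolicity.

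I do not anticipate a genuine obstacle. The main conceptual step is recognizing that $\Phi_{\nu,h}$ has trivial derivative, which is what allows the hyperbolic structure to be \emph{transported unchanged} across all parameters $(\nu,h)$; from that point on, it is a routine $2\times 2$ cone-field exercise. The only slightly delicate quantitative issue is to choose $\theta$ and $\mu_*$ compatibly so that the $\mu$-perturbation of the $A^{n_*}$-invariant cones does not destroy the strict cone invariance. The additional assumption $n_*\geq c_+\ln\mu^{-1}$ is not needed for this lemma (it plays a role later in the example); if anything, large $n_*$ only strengthens the expansion/contraction and leaves more room in the cone estimates.
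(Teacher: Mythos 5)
Your proof is correct and follows essentially the same route as the paper's: both rest on the observation that $\Phi_{\nu,h}$ is a rigid translation, so $DT_{\nu,h}=DT=D\rho_\mu\cdot A^{n_*}\cdot D\rho_\mu^{-1}$ independently of $\nu$ and $h$, and then run a uniform cone argument with translation-invariant cones around the eigendirections of $A$, taking $\mu_*$ small. The only cosmetic difference is that the paper organizes the cone invariance via a nested triple of cones $\cC_2\subset\cC_*\subset\cC_1$ instead of a single cone of fixed half-angle.
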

\begin{proof}
By definition if $T_{\nu,h}(x)=T_{\nu,h}(y)$, then $T(x)=T(y)$ and hence $x=y$, so $T_{\nu,h}$ is a diffeomorphism. On the other hand 
\[
DT_{\nu,h}(x)=DT=D\rho_\mu\circ T^{n_*}_0\circ\rho_\mu^{-1}\cdot D T^{n_*}_0\circ\rho_\mu^{-1} \cdot D\rho^{-1}.
\]
Since $A$ is hyperbolic, it has an expanding direction and a contracting direction, let us call the $v^u,v^s$, the corresponding eigenvectors, respectively. We can assume without loss of generality that $\|v^u\|=\|v^s\|=1$. Given the cones $\cC_i=\{av^u+bv^s\in \bR^2\;:\; |b|\leq i |a|\}$, we have $ T^{n_*}_0\cC_2\subset \cC_1$ and, for all $v\in\cC_2$ $\|DT^{n_*} v\|\geq 2\|v\|$, provided $n_*$ is large enough. Let $\cC_*$ be a cone such that $\cC_2\subset \cC_*\subset\cC_1$,  where the inclusions are strict. We can then choose $\mu_*$ such that, for all $\mu\in [0,\mu_*)$, $D\rho_\mu\cC_1\subset \cC_*$ and $D\rho_\mu^{-1}\cC_*\subset \cC_2$. Accordingly, $DT_{\nu,h}\cC_*\subset \cC_*$, strictly. In addition, there exists $c>0$ such that, for $v\in \cC_*$
\[
\|DT_{\nu,h}v\|\geq 2(1-c|\mu|)\|v\|
\]
hence, the vectors in the cone expand provided $\mu$ is small enough. The hyperbolicity claim follows.
\end{proof}
Let $\cB_i=\cB^{i,q-i}$, $i=0,\dots,3$, $q-i\in\bR_+$, $q\leq\bar r-2$, be the Banach spaces defined in \cite[Sections 2 and 3]{GL06} using the cone $\cC_*$, defined in the proof of Lemma \ref{lem:diffdist}, instead of the cone defined at the beginning of  \cite[Sections 3]{GL06}.\footnote{Other Banach spaces, e.g. \cite{BL22}, would work as well. We use  \cite{GL06} for convenience. On the contrary, we could not use the spaces in \cite{BKL} since it does not allow us to treat operators describing different dynamics acting on the same space.} We will also use the Banach spaces $\cB_i(\cC_k)=\cB^{i,q-i}(\cC_k)$ for the same construction using the cones $\cC_k$. Note that, $\cB_i(\cC_2)\supset \cB_i\supset \cB_i(\cC_1)$.

\begin{lemma}\label{lem:uniLYex}
 There exists $C>0$  and $\vartheta\in(0,1)$ such that, for all $g\in\cB_i$, $i\in\{1,\dots,3\}$,
\[
\|\cL_{T_{\nu,h}}  g\|_{\cB_i}\leq \vartheta\|g\|_{\cB_i}+C\|g\|_{\cB_{i-1}}.
\]
\end{lemma}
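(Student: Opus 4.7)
The plan is to invoke the Lasota-Yorke inequality of \cite[Lemma 2.2 and Section 3]{GL06} for the anisotropic Banach spaces $\cB^{p,q}$ built on the cone $\cC_*$. The key structural observation, and what makes the whole family of operators tractable with a single cone and one set of constants, is that by \eqref{eq:exmap},
\begin{equation*}
T_{\nu,h}(x) = T(x) + c(\nu,h) \mod 1, \qquad c(\nu,h) := \nu\beta\int\alpha\, h \in \bR^2,
\end{equation*}
is an \emph{$x$-independent} translate of $T$. Hence $DT_{\nu,h}(x) \equiv DT(x)$ pointwise, and all higher derivatives of $T_{\nu,h}$ coincide with those of $T$. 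In particular, every $T_{\nu,h}$ in the family shares with $T$ the same stable and unstable directions, the same expansion rate $\lambda>1$ on $\cC_*$, the same stable contraction rate, and the same $\cC^r$-norm.

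With this in hand, I would then verify that the cone $\cC_*$ constructed in the proof of Lemma \ref{lem:diffdist} and the admissible leaves used to define $\cB^{p,q}$ in \cite[Section 3]{GL06} are strictly invariant under every $T_{\nu,h}^{-1}$ with constants independent of $(\nu,h)$; this is immediate from $DT_{\nu,h}\equiv DT$. Then \cite[Lemma 2.2]{GL06} applied to each $T_{\nu,h}$ gives, for integer $p\geq 1$ and real $q\geq 0$,
\begin{equation*}
\|\cL_{T_{\nu,h}} g\|_{\cB^{p,q}} \leq C_0\,\sigma^{\min(p,q)} \|g\|_{\cB^{p,q}} + C\, \|g\|_{\cB^{p-1,q+1}},
\end{equation*}
with $\sigma\in(0,1)$ and $C_0,C>0$ depending only on the hyperbolicity and the $\cC^{q+1}$-norm of $T$, hence uniform in $(\nu,h)\in\Omega$. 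Choosing $q$ large enough so that $\min(i,q-i)\geq 1$ for $i\in\{1,2,3\}$ makes $\sigma^{\min(i,q-i)}<1$, producing the desired bound with $\vartheta := \sigma^{\min(i,q-i)}$ up to the prefactor $C_0$.

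The only delicate point is that the single-iterate inequality as stated in \cite{GL06} may carry a prefactor $C_0>1$, in which case the one-step $\vartheta$ is not yet $<1$. The cure is to iterate: since $DS_j\equiv DT$ for every $S_j\in\cT$, the same argument applied to arbitrary compositions yields, uniformly in $S_1,\dots,S_n\in\cT$,
\begin{equation*}
\|\cL_{S_n}\cdots \cL_{S_1} g\|_{\cB_i} \leq C_0\,\sigma^{n\min(i,q-i)} \|g\|_{\cB_i} + C_n\, \|g\|_{\cB_{i-1}}.
\end{equation*}
Taking $n$ with $C_0\sigma^{n\min(i,q-i)}<1$ and passing to the family-adapted equivalent norm
\begin{equation*}
\vertiii{g}_{\cB_i} := \max_{0\leq k < n} \vartheta^{-k}\, \sup\bigl\{\|\cL_{S_k}\cdots \cL_{S_1} g\|_{\cB_i} : S_j\in\cT\bigr\},
\end{equation*}
with $\vartheta\in(0,1)$ chosen so that $\vartheta^n \geq C_0\sigma^{n\min(i,q-i)}$, absorbs the prefactor and gives the one-step contraction $\vertiii{\cL_{T_{\nu,h}} g}_{\cB_i} \leq \vartheta \vertiii{g}_{\cB_i} + C'\|g\|_{\cB_{i-1}}$ for all $T_{\nu,h}\in\cT$ (the weak-norm term is handled by the corresponding $\max$ over $k$). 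The main potential obstacle is really just this renorming bookkeeping; the underlying mechanism---that uniform hyperbolicity is inherited from $T$ because $T_{\nu,h}$ differs from $T$ only by a constant translation---is built into the model.
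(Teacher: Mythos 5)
Your proposal is essentially correct but takes a genuinely different route from the paper. The paper never applies the Lasota--Yorke inequality of \cite{GL06} to the nonlinear map $T_{\nu,h}$ directly: it factors $\cL_{T_{\nu,h}}=\cL_{\Phi_{\nu,h}}\cL_{\rho}\cL_{T_0^{n_*}}\cL_{\rho^{-1}}$, shows that the three ``conjugation/translation'' factors are merely bounded between the cone-adapted spaces $\cB_i(\cC_2)\to\cB_i\to\cB_i(\cC_1)$, and extracts the contraction from $\cL_{T_0^{n_*}}$ alone, where \cite[Lemma 2.2]{GL06} for the \emph{linear} map gives the coefficient $C\lambda^{-n_*\min\{i,q-i\}}$; since $n_*$ is a free parameter of the model (eventually taken $\geq c_+\ln\mu^{-1}$), this coefficient is made $<1$ outright, yielding a genuine one-step $\vartheta<1$ in the original norm with no renorming. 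Your route --- generic one-step LY for $T_{\nu,h}$ with a prefactor $C_0>1$, then iteration over compositions and a family-adapted equivalent norm --- does work (your renorming bookkeeping is consistent, and your key observation that $T_{\nu,h}$ is a rigid translate of $T$, so that $DT_{\nu,h}\equiv DT$ and all constants are uniform in $(\nu,h)$, is exactly what makes the supremum over $\cT$ finite). But it has two costs you should acknowledge: (i) the uniform $n$-step inequality for \emph{non-autonomous} compositions $\cL_{S_n}\cdots\cL_{S_1}$ is not literally \cite[Lemma 2.2]{GL06} and must be justified (here it follows precisely from the translate structure, but it is an extra input); (ii) replacing $\|\cdot\|_{\cB_i}$ by $\vertiii{\cdot}_{\cB_i}$ proves the stated inequality only for an equivalent norm, so either all subsequent quantitative estimates in Section \ref{sec:example} (notably \eqref{eq:chosen}, \eqref{eq:Laction}, \eqref{eq:Q}, whose $\cO(\mu^3)$ smallness again comes from the factorization through $T_0^{n_*}$) must be re-examined up to the equivalence constants, or one must declare from the start that the $\cB_i$ carry the new norm. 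The paper's factorization avoids both issues and is the mechanism reused throughout the rest of the section, which is why it is the preferred argument here.
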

\begin{proof}
Note that
\[
\cL_{T_{\nu,h}}=\cL_{\Phi_{\nu,h}}\cL_{\rho}\cL_{T_0^{n_*}}\cL_{\rho^{-1}}.
\]
 By the definition of the Banach spaces it follows that, for $\mu$ small enough and $n_*$ large, we have
\begin{equation}\label{eq:intineq}
\begin{split}
&\|\cL_{\Phi_{\nu,h}}\|_{\cB_{i}}\leq C\\
&\|\cL_{T^{n_*}}\|_{\cB_{i}(\cC_2)\to\cB_{i}(\cC_1)}\leq 1\\
&\|\cL_{\rho}\|_{\cB_{i}(\cC_1)\to\cB_{i}}\leq C\\
&\|\cL_{\rho^{-1}}\|_{\cB_{i}\to\cB_{i}(\cC_2)}\leq C.\\
\end{split}
\end{equation}

Moreover, by Lemma 2.2 of \cite{GL06} we have
\begin{equation}\label{eq:basineq}
\|\cL_{T_0^{n_*}}g\|_{\cB_{i}(\cC_1)}\leq C\lambda^{-n_*\min\{i,q-i\}}\|g\|_{\cB_{i}(\cC_2)}+C\|g\|_{\cB_{i-1}(\cC_2)}.
\end{equation}
The lemma follows by \eqref{eq:intineq} and \eqref{eq:basineq}.
\end{proof}

We verify assumptions (A\ref{ass:0})-(A\ref{ass:4}) for the system defined in \eqref{eq:exmap} using these Banach spaces. Indeed, (A\ref{ass:0}) holds by definition of $\cB_{i}$.  For assumption (A\ref{ass:1}),  the fact that for any $a\in \cC^{\bar r}$, $\|a h\|_{\cB_{i}}\leq C_* \|a\|_{\cC^{q}}\|h\|_{\cB_{i}}$ follows as in \cite[Lemma 3.2]{GL06}, while the fact that $\left|\int h\right|\leq C_* \|h\|_{\cB_{0}}$ follows by definition of $\cB_{0}$ (see \cite[Section 4 ]{GL06}). Also, (A\ref{ass:3}) is verified in  \cite[Lemma 2.1]{GL06}. (A\ref{ass:2}) and 
(A\ref{ass:6}) follow from Lemma \ref{lem:uniLYex} above. Notice $\cL_{T_{\nu,h}}$ admits a spectral gap on  $\cB_{i}$ for $i\in\{1,2,3\}$. This follows by \cite[Theorem 2.3]{GL06}, since $T_{\nu,h}$ is a toral Anosov map and hence topologically transitive, see \cite[Proposition 18.6.5]{KH95}. Consequently,  $\cL_{T_{\nu,h}}$ admits a spectral decomposition; i.e., for any $g\in \cB_{i}$, $i\in\{1,2,3\}$, 
\begin{equation}\label{eq:specex}
\cL_{T_{\nu,h}}g= h\int g+Q_{T_{\nu,h}}g.
\end{equation}
We delay the verification of assumption (A\ref{ass:4}) till later, namely after the proof of Lemma \ref{lem:singular} below. 
Before stating and proving the next lemma, we first note that, \eqref{eq:rho} implies 
\begin{equation}\label{eq:rhoinv}
\rho_\mu^{-1}(x)=x-\mu\chi(x)+\cO(\mu^2).
\end{equation}
Thus,
\begin{equation}\label{eq:J}
\begin{split}
&J \circ \rho_\mu^{-1}=e^{-\operatorname{Tr}\ln(\Id+\mu D\chi)\circ\rho_\mu^{-1}}=e^{-\operatorname{Tr}(\mu D\chi-\frac {\mu^2}2 (D\chi)^2+\cO(\mu^3))\circ\rho_\mu^{-1}}\\
 &=\left[1-\mu\operatorname{Tr}(D\chi)+\frac{\mu^2}2\left[\operatorname{Tr}((D\chi)^2)+[\operatorname{Tr}(D\chi)]^2 \right]+\cO(\mu^3)\right]\circ\rho_\mu^{-1}\\
 &=1-\mu\operatorname{Tr}(D\chi)+\left[\operatorname{Tr}((D\chi)^2)+[\operatorname{Tr}(D\chi)]^2+2\langle \nabla \operatorname{Tr}(D\chi), \chi\rangle\right]\frac{\mu^2}2\\
 &\phantom{=}
 +\cO(\mu^3)=1+a\mu+b\mu^2+\cO(\mu^3)
\end{split}
 \end{equation}
 where 
 \begin{equation}\label{eq:ab}
 \begin{split}
a&=-\operatorname{Tr}(D\chi)=\beta_1\sin\langle \beta,x\rangle-\beta_2\cos \langle \beta,x\rangle\\
b&=\operatorname{Tr}((D\chi)^2)+[\operatorname{Tr}(D\chi)]^2+2\langle \nabla \operatorname{Tr}(D\chi), \chi\rangle\\
& =(\beta_1^2-\beta_2^2)\sin^2\langle \beta,x\rangle-(\beta_1^2-\beta_2^2)\cos^2\langle \beta,x\rangle-\frac 32\beta_1\beta_2\sin(2 \langle \beta,x\rangle).
\end{split}
\end{equation}
\begin{lemma}\label{lem:notzero}
There exists $\mu_\star>0$ such that for all $\mu\leq \mu_\star$, 
$\Theta(\nu, h)\not\equiv 0$.
\end{lemma}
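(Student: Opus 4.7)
The plan is to reduce $\Theta(\nu,h)\not\equiv0$ to a transparent property of the invariant density $H(\nu,h)$. Since $\beta\in\bR^2$ is a constant vector and $\cL_{T_{\nu,h}}H=H$, componentwise linearity gives $\cL_{T_{\nu,h}}(\beta H)=\beta\,\cL_{T_{\nu,h}}H=\beta H$, and hence $\divv\cL_{T_{\nu,h}}(\beta H)=\divv(\beta H)=\langle\beta,\nabla H\rangle$. This distribution lies in $\bV$ (its integral vanishes by integration by parts), and $\Id-\cL_{T_{\nu,h}}$ is invertible on $\bV$ by the spectral gap from Lemma~\ref{lem:spectra}. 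Consequently
\begin{equation*}
\Theta(\nu,h)=0 \iff \langle\beta,\nabla H(\nu,h)\rangle=0.
\end{equation*}

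Next I would show that $\langle\beta,\nabla H\rangle=0$ forces $H\equiv1$. Because $H\in\cB_3\subset(\cC^{\bar r})'$, the Fourier coefficients $\hat H_k:=H(\bar e_k)$ with $e_k(x):=e^{2\pi i\langle k,x\rangle}$ are well defined, and $H$ is determined by them via density of trigonometric polynomials in $\cC^{\bar r}$. Assumption (A\ref{ass:0}) makes $\partial_\beta H\in\cB_2$ a bona fide distribution, and testing $\partial_\beta H=0$ against $\bar e_k$ yields $2\pi i\langle k,\beta\rangle\hat H_k=0$ for every $k\in\bZ^2$. Since the standing hypothesis $\beta_1/\beta_2\notin\bQ$ implies $\langle k,\beta\rangle\neq0$ for every nonzero $k\in\bZ^2$, only $\hat H_0$ can survive, and the normalization $\int H=1$ forces $H\equiv1$.

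The third step is to rule out $H\equiv1$ for small $\mu>0$. If $H\equiv1$ then the diffeomorphism $T_{\nu,h}$ preserves Lebesgue; since $T_{\nu,h}=\Phi_{\nu,h}\circ T$ with $\Phi_{\nu,h}$ a translation, this is equivalent to $|\det DT|\equiv1$. The chain rule applied to $T=\rho_\mu\circ T_0^{n_*}\circ\rho_\mu^{-1}$, combined with $\det DT_0^{n_*}=1$, gives
\begin{equation*}
|\det DT(x)|=\frac{J_\mu(T_0^{n_*}\rho_\mu^{-1}x)}{J_\mu(\rho_\mu^{-1}x)},\qquad J_\mu:=|\det D\rho_\mu|,
\end{equation*}
so $|\det DT|\equiv1$ iff $J_\mu\circ T_0^{n_*}=J_\mu$. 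Since the hyperbolic toral automorphism $T_0^{n_*}$ is ergodic with respect to Lebesgue, this forces $J_\mu$ to be constant. However \eqref{eq:J} and \eqref{eq:ab} give $J_\mu\circ\rho_\mu^{-1}=1+\mu a+\cO(\mu^2)$ with $a(x)=\beta_1\sin\langle\beta,x\rangle-\beta_2\cos\langle\beta,x\rangle$ manifestly non-constant; since $\rho_\mu^{-1}$ is a diffeomorphism, $J_\mu$ itself is non-constant for all sufficiently small $\mu>0$. Choosing $\mu_\star$ this small contradicts $H\equiv1$.

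The only genuinely non-routine step is the Fourier argument showing that vanishing of the $\beta$-directional derivative of the distribution $H$ forces $H$ to be constant; Assumption (A\ref{ass:0}) and the inclusion $\cB_3\subset(\cC^{\bar r})'$ make this rigorous. All remaining steps are direct calculations using the explicit form of the example, with ergodicity of $T_0^{n_*}$ as the one extra input.
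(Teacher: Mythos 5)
Your proof is correct and follows essentially the same route as the paper: reduce $\Theta(\nu,h)\equiv 0$ to $\partial_\beta H=0$, use the irrationality of $\beta$ and Fourier coefficients to force $H\equiv 1$, and then derive a contradiction from ergodicity plus the non-constancy of $J_\mu$ coming from \eqref{eq:J}--\eqref{eq:ab}. The only (cosmetic) difference is in the last step, where you argue directly with $|\det DT|\equiv 1$ and ergodicity of $T_0^{n_*}$ with respect to Lebesgue, while the paper performs a change of variables using the SRB formula and the ergodicity of $T$; both hinge on the same fact that $J_\mu$ would have to be constant.
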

\begin{proof}
Recalling \eqref{eq:Theta}, we have 
\begin{equation}\label{eq:Theta_ex}
\Theta(\nu,h)=-(\Id-\cL_{T_{\nu,h}})^{-1}\partial_\beta H(\nu,h).
\end{equation}
Thus, if  $\Theta(\nu, h)\equiv 0$, then, seeing $H(\nu,h)$ as a distribution, we have
\[
0=\int e^{i\langle k,x\rangle}\partial_\beta H(\nu,h)=-i\langle k,\beta\rangle\int e^{i\langle k,x\rangle} H(\nu,h).
\] 
Since $\beta$ is an irrational vector $\langle k,\beta\rangle\neq 0$ for all $k\in\bZ^2\setminus\{0\}$. It follows that $H(\nu,h)-1$ is zero against the dense set $\{e^{i\langle k,x\rangle}\}_{k\in\bZ^2}$, hence $H(\nu,h)=1$ as a distribution, and hence as an element of $\cB_1$. However, for each $\vf\in\cC^0$, recalling that the SRB measure of $T$ is $(J\circ\rho^{-1})^{-1}\Leb$, we can write, setting $\eta_h:=\beta\int\alpha h$
\[
\begin{split}
\int\vf&=\int \vf\circ T_{\nu,h}=\int\vf\left(T(x)+\nu\beta\int\alpha h\right)\frac{J\circ\rho^{-1}(x)}{J\circ\rho^{-1}(x)}\\
&=\int\vf\left(x+\nu\eta_h\right)\frac{J\circ\rho^{-1}\circ T^{-1}(x)}{J\circ\rho^{-1}(x)}
=\int\vf\left(x\right)\frac{J\circ\rho^{-1}\circ T^{-1}(x-\nu\eta_h)}{J\circ\rho^{-1}(x-\nu\eta_h)}.
\end{split}
\]
It follows that $J\circ \rho^{-1}$ must be an invariant function for $T$. But, $T$ being ergodic, $J\circ \rho^{-1}$ must then be constant  contradicting equations \eqref{eq:J} and \eqref{eq:ab}.
 \end{proof}
\begin{lemma}\label{lem:singular}
There exist $\mu_\star, c_+,c>0$ such that for each $\mu\leq\mu_\star$, and $n_*\geq c_+\ln\mu^{-1}$, any continuous curve $(\nu(\tau),h_{\nu(\tau)})$ in $\bR\times(\cB_2\cap \bW)$ such that $\cL_{T_{\nu,h_\nu}}h_\nu=h_\nu$, the equation
\begin{equation}\label{eq:zeroex}
\nu\int\alpha\Theta(\nu, h_\nu)=1
\end{equation}
has no solutions for $\nu\leq \frac 4{3\mu}$ and at least two solutions in each interval $I$ such that $\nu(I)\subset (\frac 5{3\mu}, \frac c{\mu^{2}})$ and $\nu(I)$ has length larger than $2\pi+c_+$.
\end{lemma}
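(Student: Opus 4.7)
The plan is to study $\Psi(\nu) := \nu\int\alpha\,\Theta(\nu, h_\nu)$ along the curve by asymptotic expansion in $\mu$. First I would unpack the definition: from \eqref{eq:Theta_ex} and the fixed-point identity $H(\nu, h_\nu) = h_\nu$,
\[
\Theta(\nu, h_\nu) = -(\Id - \cL_{T_{\nu, h_\nu}})^{-1}\,\partial_\beta h_\nu.
\]
The crucial structural observation is that $T_{\nu, h}(x) = T(x) + \nu\beta c_h$, with $c_h := \int\alpha h$, is the composition of $T$ with the rigid translation $\tau_v$, $v = \nu\beta c_h$; hence $\cL_{T_{\nu, h}} = \cL_{\tau_v}\cL_T$, and the whole $\nu$-dependence of the operator reduces to a phase factor.

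Second, I would perform perturbation theory in $\mu$. At $\mu = 0$, $T = T_0^{n_*}$ is linear and $T_{\nu, h} = \tau_v\circ T_0^{n_*}$ is affinely conjugate to $T_0^{n_*}$ by $\tau_w$ with $w = (\Id - A^{n_*})^{-1}v$, so $h_\nu|_{\mu=0} \equiv 1$ and $\Theta|_{\mu=0} \equiv 0$ (consistent with Lemma \ref{lem:notzero} being a statement at higher order). Using $\rho = \Id + \mu\chi$ and \eqref{eq:J}--\eqref{eq:ab}, I would expand $h_\nu = 1 + \mu u_1(\nu) + O(\mu^2)$, with $u_1$ obtained via linear response to the combined ``$\mu$-perturbation of $T$ from $T_0^{n_*}$'' and ``$\nu$-translation'' starting from the $\mu=0$ conjugate system. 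Self-consistency of $c_{h_\nu}$ on the right-hand side contributes only $O(\mu)$ to the phase $\langle\beta, w_\nu\rangle$, which I would absorb into the slack constant $c_+$.

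Third, I would exploit the Fourier structure. Since $\alpha$ and $\chi$ are Fourier-supported in $\{0, \pm\beta\}$, so is $u_1$. Applying $\cL_{T_{\nu, h}}^n = (\cL_{\tau_v}\cL_{T_0^{n_*}})^n + O(\mu)$ to Fourier modes, each $\cL_{T_0^{n_*}}$ shifts $e^{i\langle k, x\rangle}$ to $e^{i\langle A^{-n_*}k, x\rangle}$ and each $\cL_{\tau_v}$ multiplies by $e^{-i\langle k, v\rangle}$. Hyperbolicity of $A$ drives $A^{-nn_*}\beta$ to $\infty$ along the stable direction of $A$ as $n\to\infty$, so for $n \geq 1$ the image mode lies outside the Fourier support of $\hat\alpha$ and $\int\alpha\cdot\cL^n(\cdot)$ vanishes at zeroth order in $\mu$; the remaining contributions are $O(\lambda^{-n_*})$-small thanks to $n_* \geq c_+\ln\mu^{-1}$. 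Only the $n = 0$ term survives at leading order, yielding a representation of the form
\[
\Psi(\nu) = \mu\,\nu\, P\bigl(\langle\beta, w_\nu\rangle\bigr) + O\bigl(\mu^2\nu^2 + \mu\nu\lambda^{-n_*}\bigr),
\]
where $w_\nu = (\Id - A^{n_*})^{-1}\nu\beta c_{h_\nu}$ and $P$ is an explicit trigonometric polynomial whose coefficients I would read off from $\alpha$, $\chi$, $a$, and the resolvent $(\Id - \cL_{T_0^{n_*}})^{-1}|_{\bV}$.

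Finally, both halves of the lemma follow from this representation. The phase $\langle\beta, w_\nu\rangle = \nu c_{h_\nu}\,\langle\beta, (\Id - A^{n_*})^{-1}\beta\rangle$ is essentially linear in $\nu$, so $P$ is $\nu$-periodic with period $\approx 2\pi$ up to an $O(\mu)+O(\lambda^{-n_*})$ phase distortion absorbed into $c_+$. Explicit computation of the leading coefficients of $P$ yields $\|P\|_\infty$ in a range such that for $\nu \leq 4/(3\mu)$ one has $|\Psi(\nu)| \leq (4/3)\|P\|_\infty + o(1) < 1$, giving no solutions of \eqref{eq:zeroex}; whereas on any $\nu$-interval of length $> 2\pi + c_+$ contained in $(5/(3\mu), c/\mu^2)$ the phase sweeps a full period of $P$ and, since $\nu\mu \geq 5/3$, the oscillatory term crosses $1$ from above and below, producing at least two solutions by the intermediate value theorem. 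The main obstacle is controlling the $O(\mu^2\nu^2)$ remainder uniformly up to $\nu \sim \mu^{-2}$: this determines $c$, and relies on the choice $n_* \geq c_+\ln\mu^{-1}$ to make $\lambda^{-n_*}$-errors negligible compared to $\mu^2$. A secondary subtlety is the self-consistent treatment of $c_{h_\nu}$ in $w_\nu$, which only perturbs the phase by $O(\mu)$ and is harmless provided $c_+$ is chosen large enough.
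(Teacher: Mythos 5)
Your overall strategy --- expand in $\mu$ around the linear automorphism, use $n_*\gtrsim\ln\mu^{-1}$ to make $\cL_{T_0^{n_*}}$ an $O(\mu^3)$-perturbation of the rank-one projection onto constants, reduce $\nu\int\alpha\Theta(\nu,h_\nu)$ to an explicit oscillatory leading term plus a remainder, and finish with the intermediate value theorem --- is the same as the paper's. But two steps are genuinely wrong as written.

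First, the phase of the oscillation is misattributed. You claim the leading term is $\mu\nu P(\langle\beta,w_\nu\rangle)$ with $w_\nu=(\Id-A^{n_*})^{-1}\nu\beta c_{h_\nu}$, i.e.\ the phase enters through the translation conjugating $\tau_v\circ T_0^{n_*}$ to $T_0^{n_*}$. That conjugator does not appear in the answer at this order: at $\mu=0$ the invariant density is Lebesgue, which is translation-invariant and does not see $w$, and at first order in $\mu$ the correct mechanism is that $\cL_{T_{\nu,h}}=\cL_{\Phi_{\nu,h}}\cL_{T}$ with $\cL_T$ rank-one up to $O(\mu^3)$ (the paper's \eqref{eq:chosen}), so the fixed point is $(J\circ\rho^{-1})^{-1}$ composed with $\Phi_{-\nu,h}$ --- the density translated by the \emph{full} vector $\nu\beta\omega$, $\omega=\int\alpha h_\nu$. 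The resulting phase is $\nu\omega\,|\beta|^2=\nu\omega\approx\nu$, which is what gives period $\approx 2\pi$ in $\nu$. Your phase has slope $\langle\beta,(\Id-A^{n_*})^{-1}\beta\rangle$, which for large $n_*$ is approximately $\langle\beta,P_s\beta\rangle$ ($P_s$ the stable spectral projection of $A$); this is not $1$ in general, and it is $O(\lambda^{-n_*})=O(\mu^3)$ when $\beta$ is the unstable eigenvector of $A$ (precisely the choice made in Section \ref{sec:phase}), in which case your formula predicts no oscillation at all. Relatedly, the Fourier heuristic is backwards: $\cL_{T_0}$ sends the mode $\beta$ to $(A^{T})^{-1}\beta$, which for $\beta$ in the unstable direction contracts toward the zero mode rather than escaping to infinity; what kills the $n\ge1$ terms is the spectral gap of $\cL_{T_0}$ on the anisotropic spaces, not the support of $\hat\alpha$.

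Second, the remainder $O(\mu^2\nu^2)$ is fatal on the stated range: at $\nu=c\mu^{-2}$ it equals $c^2\mu^{-2}$, which diverges as $\mu\to0$ for every fixed $c$, so ``this determines $c$'' cannot close the argument. The remainder must be linear in $\nu$, i.e.\ $O(\nu\mu^2)$ as in \eqref{eq:nosol}; this is achieved by keeping the self-consistent $\omega$ inside the argument of the sine, whereas linearizing the phase around $\omega=1$ (as your expansion implicitly does) is exactly what generates the uncontrollable $\mu\nu\cdot O(\mu\nu)$ term. A smaller point: with the correct amplitude $\tfrac{\mu\nu}{2}$ your bound $(4/3)\|P\|_\infty<1$ holds, but ``$\nu\mu\ge 5/3$ forces a crossing of $1$'' does not, since $\tfrac56<1$; the crossings require $\mu\nu>2$, which is why Proposition \ref{prop:exsols} works on $(2\mu^{-1},c\mu^{-2})$.
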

\begin{proof}
For each $\mu>0$, choosing $c_+$ large enough, we have
\begin{equation}\label{eq:chosen}
\left\|\cL_{T_0^{n_*-1}} g-\int g\right\|_{\cB_3}\leq \mu^3\|g\|_{\cB_3}.
\end{equation}
This implies that, for all smooth $\vf$ and $g\in\bW$,
\[
\begin{split}
\int \vf\cL_{T_{\nu,h}}g&=\int g(x)\vf\circ \Phi_{\nu,h}\circ\rho_\mu\circ T_{0}^{n_*}\circ \rho_\mu^{-1}(x)dx\\
&=\int g\circ \rho_\mu(y)J(y)\vf\circ \Phi_{\nu,h}\circ\rho_\mu\circ T_{0}^{n_*}(y)dy\\
&=\int \left[\cL_{T_0^{n_*-1}}[g\circ \rho_\mu J] -\int g\circ \rho_\mu J\right]\vf\circ \Phi_{\nu,h}\circ\rho_\mu\circ T_0\\
&\phantom{=}
+\int g\int \vf\circ \Phi_{\nu,h}\circ\rho_\mu.
\end{split}
\]
Since 
\[
\int \vf\circ \Phi_{\nu,h}\circ\rho_\mu=\int \frac{\vf}{J \circ \rho_\mu^{-1}\circ\Phi_{-\nu,h}},
\]
we have
\[
\cL_{T_{\nu,h}}g=\frac{\cL_{T_0}\left[\cL_{T_0^{n_*-1}}[g\circ \rho_\mu J] -\int g\circ \rho_\mu J\right]}{J \circ \rho_\mu^{-1}\circ\Phi_{-\nu,h}}
+\frac{1}{J \circ \rho_\mu^{-1}\circ\Phi_{-\nu,h}}\int g.
\]
The first term after the equality can be estimated using \eqref{eq:chosen}, yielding, for $i\in\{1,2,3\}$,
\begin{equation}\label{eq:Laction}
\left\|\cL_{T_{\nu,h_\nu}}g-\frac{1}{J \circ \rho_\mu^{-1}\circ\Phi_{-\nu,h}}\int g\right\|_{\cB_i}\leq C\mu^3\|g\|_{\cB_i}.
\end{equation}
Note that, for $\mu$ small $g\circ \rho_\mu\in\cB_1(\cC_2)$ and so the estimate follows remembering also \eqref{eq:intineq}. Thus, recalling \eqref{eq:J},\footnote{ Recall that $h_\nu$ may not be a function but only a distribution, hence $\cO$, is meant with respect to the norm of $\cB_2$.}
\begin{equation}\label{eq:h}
\begin{split}
&h_\nu=\cL_{T_{\nu,h_\nu}}h_\nu=\frac{1}{J \circ \rho_\mu^{-1}\circ\Phi_{-\nu,h_\nu}}+\cO(\mu^3\|h_\nu\|_{\cB_2})\\
&\phantom{h_\nu}=1-\mu a\circ \Phi_{-\nu,h_\nu}+\mu^2\left[ \frac{a^2}2-b\right]\circ \Phi_{-\nu,h_\nu}+\cO(\mu^3\|h_\nu\|_{\cB_2})\\
&\phantom{h_\nu} =1-\mu a\circ \Phi_{-\nu,h_\nu}+\mu^2\left[ \frac{a^2}2-b\right]\circ \Phi_{-\nu,h_\nu}+\cO(\mu^3).
\end{split}
\end{equation}
Moreover, by \eqref{eq:Laction}, for $g\in\bV$, and $i\in\{1,2,3\}$,
\begin{equation}\label{eq:Q}
\begin{split}
&\left\|Q_\nu g\right\|_{\cB_i}=\left\|\cL_{T_{\nu,h_\nu}}g-h_\nu\int g\right\|_{\cB_i}\\
&\phantom{\left\|Q_\nu g\right\|_{\cB_2}}
=\left\|\cL_{T_{\nu,h_\nu}}g-\frac{1}{J \circ \rho_\mu^{-1}\circ \Phi_{-\nu,h_\nu}}\int g\right\|_{\cB_1}\leq C\mu^3\|g\|_{\cB_i}.
\end{split}
\end{equation}
We assume that $\mu_\star$ is so small that $C\mu_\star^3\leq \frac 12$.
Thus, setting $\omega=\int \alpha h_\nu$, and using Werner formulae,
\begin{equation}\label{eq:iterone}
\begin{split}
&\omega=\int_{\bT^2} \alpha(x) h_\nu(x) dx=\int \alpha \left\{1- \mu a\circ \Phi_{-\nu,h}\right\}+\cO(\mu^2)\\
&=1+\mu\int \cos\langle\beta,x+\nu\beta\omega\rangle \left(\beta_1\sin\langle \beta, x\rangle-\beta_2\cos\langle \beta, x\rangle\right)+\cO(\mu^2)\\
&=1+\frac\mu 2\int \beta_1\left[\sin\langle\beta,2x+\nu\beta\omega\rangle- \sin\langle\beta,\nu\beta\omega\rangle\right]\\
&\phantom{=}
-\frac\mu 2\int \beta_2\left[\cos\langle\beta,2x+\nu\beta\omega\rangle+ \cos\nu \omega\right]
+\cO(\mu^2)\\
&=1-\frac{\mu}2\left(\sin\vartheta\sin\nu \omega+\cos\vartheta\cos\nu \omega\right)+\cO(\mu^2)\\
&=1-\frac{\mu}2\cos(\nu\omega-\vartheta)+\cO(\mu^2).
\end{split}
\end{equation}
This implies $\left|\omega-1\right|\leq C|\mu|$.
Recalling \eqref{eq:Theta_ex}, \eqref{eq:Q} and \eqref{eq:h},
\begin{equation}\label{eq:turning}
\begin{split}
&\int\alpha\Theta(\nu, h_\nu)=-\int\alpha(\Id-Q_{\nu})^{-1}\partial_\beta h_\nu\\
&=-\int\alpha\partial_\beta h_\nu +\cO(\mu^2)=\int\partial_\beta\alpha\cdot h_\nu+\cO(\mu^2)\\
&=-\mu\int \sin \langle \beta,x+\nu\beta\omega\rangle \left[  \beta_1\sin\langle \beta,x\rangle-\beta_2\cos \langle \beta,x\rangle\right]+\cO(\mu^2)\\
&=-\frac{\mu}2\left[\beta_1\cos\nu \omega-\beta_2\sin\nu\omega\right]+\cO(\mu^2)\\
&=\frac{\mu}2\sin(\nu\omega-\vartheta)+\cO(\mu^2)
\end{split}
\end{equation}
We can now conclude our study of equation \eqref{eq:zeroex}.

\begin{equation}\label{eq:nosol}
\begin{split}
\Gamma(\nu):=&\nu\int\alpha\Theta(\nu, h_\nu)\\
=&\frac{\mu\nu}2\sin\left(\nu-\vartheta-\nu\frac{\mu}2\cos(\nu\omega-\vartheta)+\cO(\nu\mu^2)\right)+\cO(\nu\mu^2)\\
=&\frac{\mu\nu}2\sin\left(\nu(1+\cO(\mu))-\vartheta\right)+\cO(\nu\mu^2).
\end{split}
\end{equation}
Thus, for $\mu$ small enough and $\nu\leq \frac 4{3\mu}$ we have $|\Gamma(\nu)|<1$. On the other hand, if $\nu\in (\frac 5{3\mu}, \frac {c}{\mu^{2}})$, for some constant $c>0$ small enough, we have that the equation \eqref{eq:zeroex} must have at least two solutions each time that $\nu$ varies more than $2\pi+c_+$.

\end{proof}
We now verify (A\ref{ass:4}). Recalling the spectral decomposition of $\cL_{T_{\nu,h}}$ in \eqref{eq:specex}, which holds for $\cB_i$, with $i\in\{1,2,3\}$, inequality \eqref{eq:Q} yields for $g\in\cB_i$ and $\mu$ small, 
$$\|Q_{T_{\nu,h}}\|_{\cB_{i}}\leq \sigma$$
uniformly, for some $\sigma\in(0,1)$. Thus, if $\int g=0$, $\|g\|_{\cB_{i}}\leq 1$, then for all $\{S_j\}_{j=1}^{\bar n}\subset \cT$ 
\[
\|\cL_{S_{\bar n}}\cdots\cL_{S_1}g\|_{\cB_{i}}=\|Q_{S_{\bar n}}\dots Q_{S_1} g\|_{\cB_{i}}\leq \sigma^{\bar n},
\]
which verifies (A\ref{ass:4}) and shows that the present model satisfies all our assumptions.
\begin{lemma}\label{lem:vercond2}
If $h_\nu \in\bW$ is such that $\cL_{T_{\nu,h_\nu}}h_\nu=h_\nu$,
and
\[
\nu\int\alpha\Theta(\nu, h_\nu)=1,
\]
then, for $\nu \in (4\mu^{-1}, c\mu^{-2})$, with $c$ small enough,
\[
\begin{split}
& \operatorname{Range}(D_{h_{\nu}} F)\oplus D_{\nu} F=\bV\\
& \int\alpha D^2_hF|_{\nu,h_{\nu}}(\Theta(\nu,h_{\nu}),\Theta(\nu,h_{\nu}))=\cO(\mu)\neq 0\\
&\int \alpha h_{\nu}\neq 0.
 \end{split}
\]
\end{lemma}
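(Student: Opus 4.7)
The plan is to verify (H1)--(H3) in turn by exploiting the explicit leading-order expressions for $h_\nu$, $\omega := \int\alpha h_\nu$ and $\int\alpha\Theta(\nu,h_\nu)$ already worked out in the proof of Lemma \ref{lem:singular}, together with the strong spectral gap \eqref{eq:Q}. Condition (H3) is immediate from \eqref{eq:iterone}, which gives $\omega = 1 + \cO(\mu)$, nonzero for $\mu$ small. For (H1) I would use Lemma \ref{lem:der} to write $D_hF\phi = \phi - \nu\Theta\int\alpha\phi$ and $D_\nu F = -\omega\Theta$, together with the functional $Z(\phi) := \int\alpha\phi$ from \eqref{eq:Zeta}. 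The hypothesis $\nu\int\alpha\Theta = 1$ gives $Z\circ D_hF\equiv 0$ on $\bV$, so $\operatorname{Range}(D_hF)\subset \ker Z\cap\bV$; equality holds because $D_hF$ acts as the identity on $\ker Z\cap\bV$. Then $Z(D_\nu F) = -\omega/\nu\neq 0$ places $D_\nu F$ outside $\operatorname{Range}(D_hF)$, yielding the direct sum.

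The main work is (H2). The key observation is that $\beta$ is a \emph{constant} vector, so $\cL_{T_{\nu,h_\nu}}(\beta_i\beta_j h_\nu) = \beta_i\beta_j h_\nu$ and $\sum_{i,j}\beta_i\beta_j\partial_{x_i}\partial_{x_j} = \partial_\beta^2$. Combined with $\Theta = -(\Id-\cL_{T_{\nu,h_\nu}})^{-1}\partial_\beta h_\nu$ from \eqref{eq:Theta_ex} and the hypothesis $\nu^2(\int\alpha\Theta)^2 = 1$, the formula in Lemma \ref{lem:der} collapses to
\[
D_h^2F(\Theta,\Theta) = 2(\Id-\cL_{T_{\nu,h_\nu}})^{-1}\partial_\beta \cL_{T_{\nu,h_\nu}}\Theta - (\Id-\cL_{T_{\nu,h_\nu}})^{-1}\partial_\beta^2 h_\nu.
\]
Using \eqref{eq:Q}, $\|Q_\nu\|_{\cB_i}\leq C\mu^3$, so $\cL_{T_{\nu,h_\nu}}\Theta = Q_\nu\Theta = \cO(\mu^4)$ (with $\|\Theta\|_{\cB_2} = \cO(\mu)$ obtained from \eqref{eq:h}), making the first term contribute only $\cO(\mu^4)$. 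Using \eqref{eq:Q} again to write $(\Id-\cL_{T_{\nu,h_\nu}})^{-1}|_{\bV} = \Id + \cO(\mu^3)$, integration by parts (noting $\partial_\beta^2\alpha = \cos\langle\beta,x\rangle$ since $|\beta|^2 = 1$) gives $\int\alpha D_h^2F(\Theta,\Theta) = -\int\cos\langle\beta,x\rangle h_\nu + \cO(\mu^4)$. Substituting \eqref{eq:h} and evaluating the trigonometric integrals exactly as in \eqref{eq:iterone}--\eqref{eq:turning} yields
\[
\int\alpha D_h^2F(\Theta,\Theta) = -\frac{\mu}{2}\cos(\nu\omega-\vartheta) + \cO(\mu^2).
\]
Finally, the hypothesis $\nu\int\alpha\Theta = 1$ together with \eqref{eq:turning} forces $\sin(\nu\omega-\vartheta) = 2(\nu\mu)^{-1} + \cO(\mu)$; on the interval $\nu\in(4\mu^{-1},c\mu^{-2})$, with $c$ small, this gives $|\sin(\nu\omega-\vartheta)|\leq 1/2 + \cO(\mu)$, hence $|\cos(\nu\omega-\vartheta)|\geq 1/\sqrt{2}$, and (H2) follows with the claimed order $\mu$.

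The main technical obstacle is verifying that the nested $(\Id-\cL)^{-1}$ term in the formula for $D_h^2F$ is genuinely of lower order than the $\partial_\beta^2 h_\nu$ term; naively both look like $\cO(\mu)$, and only the strong contraction $\|Q_\nu\| = \cO(\mu^3)$ from \eqref{eq:Q} rescues the argument. This is precisely why the hypothesis $n_*\geq c_+\ln\mu^{-1}$ is needed: via \eqref{eq:chosen} it produces the exceptional contraction in \eqref{eq:Q}.
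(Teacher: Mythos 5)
Your proof is correct and follows essentially the same route as the paper's: (H3) from $\int\alpha h_\nu=1+\cO(\mu)$, (H1) via the functional $Z(\phi)=\int\alpha\phi$ annihilating $\operatorname{Range}(D_hF)$ but not $D_\nu F$, and (H2) by collapsing the second-derivative formula using the constancy of $\beta$ and $\cL_{T_{\nu,h_\nu}}h_\nu=h_\nu$, discarding the nested term via $\|Q_\nu\|=\cO(\mu^3)$, computing $\int(\partial_\beta^2\alpha)h_\nu$ explicitly, and using the turning-point condition to keep $|\sin(\nu\omega-\vartheta)|$ away from $1$. The overall sign of your main term ($-\tfrac{\mu}{2}\cos(\nu\omega-\vartheta)$ versus the paper's $+\tfrac{\mu}{2}\cos(\nu\omega-\vartheta)$, apparently a sign slip in the paper's \eqref{eq:checksec}) is immaterial for the claim $\cO(\mu)\neq 0$.
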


\begin{proof}
For any $g\in\bV$, let
\begin{equation}\label{eq:range1}
\phi=\nu\Theta(\nu, h_\nu)\cdot a\int\alpha h_{\nu} -g,
\end{equation}
with $$a=\frac{\int\alpha g}{\int \alpha h_{\nu}}.$$ 
Notice that, by definition, $\alpha> 0$ outside of a Lebesgue null set, hence, for $\mu$ small enough, $\int \alpha h_{\nu}>0$, hence $a$ is well defined. This proves the third statement of the Lemma. Note that, by the hypothesis and the definition of $\phi$ it follows $\int\alpha\phi=0$. Recalling Lemma \ref{lem:der} we have
\[ 
\begin{split}
&D_{h} F|_{(\nu, h_\nu)}\phi=\phi\\
&D_{\nu} F|_{(\nu, h_\nu)}=-\Theta(\nu, h_\nu)\int\alpha h_{\nu}.
\end{split}
\]
Consequently \eqref{eq:range1} can be written as
\begin{equation}\label{eq:range2}
g=-\nu aD_{\nu} F|_{(\nu, h_\nu)}-D_{h} F|_{(\nu, h_\nu)}\phi,
\end{equation}
which proves the first statement of the Lemma.

To prove the second statement we use Lemma \ref{lem:der}. Since $\cL_{T_{\nu,h_\nu}}h_\nu=h_\nu$, $\nu\int\alpha\Theta(\nu, h_\nu)=1$, we get
\begin{equation}\label{eq:checksec}
\begin{split}
&D^2_hF|_{\nu,h_{\nu}}(\Theta(\nu,h_{\nu}),\Theta(\nu,h_{\nu}))\\
&=-\Big\{2\sum_{i,j}\beta_i\beta_j\left(\Id-\cL_{T_{\nu,h}}\right)^{-1} \partial_{x_i}\left[ \cL_{ T_{\nu,h_\nu}}(\Id-\cL_{T_{\nu,h}})^{-1}\partial_{x_j}h_\nu\right]\\
&+(\Id-\cL_{T_{\nu,h}})^{-1}\sum_{i,j}\beta_i\beta_j\partial_{x_i}\partial_{x_j} h_\nu \Big\}.\\
&=2\left(\Id-\cL_{T_{\nu,h}}\right)^{-1} \partial_{\beta} \cL_{ T_{\nu,h_\nu}}\Theta(\nu,h_{\nu})+\left(\Id-\cL_{T_{\nu,h}}\right)^{-1}\partial_\beta^2h_\nu
\end{split}
\end{equation}
Therefore, by \eqref{eq:checksec} and \eqref{eq:Q}, \eqref{eq:Laction}, \eqref{eq:h}, and \eqref{eq:iterone} we obtain
\[
\begin{split}
&\int\alpha D^2_hF|_{\nu,h_{\nu}}(\Theta(\nu,h_{\nu}),\Theta(\nu,h_{\nu}))\\
&=\int\alpha \left[2\left(\Id-Q_{T_{\nu,h}}\right)^{-1} \partial_{\beta} \cL_{ T_{\nu,h_\nu}}\Theta(\nu,h_{\nu})+\left(\Id-Q_{T_{\nu,h}}\right)^{-1}\partial_\beta^2h_\nu\right]\\
&=\int\alpha \left[2 \partial_{\beta} \cL_{ T_{\nu,h_\nu}}\Theta(\nu,h_{\nu})+\partial_\beta^2h_\nu\right]+\cO(\mu^3)\\
&=-2\int(\partial_\beta\alpha) Q_{ T_{\nu,h_\nu}}\partial_\beta h_\nu+\int(\partial_\beta^2\alpha) h_\nu+\cO(\mu^3)\\
&=-\mu\int\cos(\langle\beta,x\rangle \langle[\beta_1\sin\langle\beta,x\rangle-\nu\omega)-\beta_2\cos(\langle\beta, x\rangle-\nu\omega)]+\cO(\mu^3)\\
&=\frac{\mu}2 \beta_1\sin\nu\omega+\frac{\mu}2 \beta_2\cos\nu\omega+\cO(\mu^3)\\
&=\frac{\mu}2 \cos(\nu\omega-\vartheta)+\cO(\mu^3).
\end{split}
\]
By equation \eqref{eq:turning} we have that $\nu\int\alpha\Theta(\nu, h_\nu)=1$ when 
\[
|\sin(\nu\omega-\vartheta)|=\frac{2}{\nu\mu}+\cO(\mu)\leq \frac34.
\]
Hence, the Lemma.
\end{proof}
\begin{remark}\label{re:turn}
Lemma \ref{lem:vercond2} implies that when $\nu(\bar\tau)\int\alpha\Theta(\nu(\bar \tau), h_{\nu(\bar \tau)}=1$, then the curve $(\nu(\tau), h_{\nu(\tau)})$ has a turning point at $\bar\tau$ in the bifurcation diagram of the invariant measures, see figure \ref{NewBirfucation}.
\end{remark}
We are now ready to conclude the argument.
\begin{proof}[{\bf Proof of Proposition \ref{prop:exsols}}]\ \\
Lemma \ref{lem:noturn}, Lemma \ref{lem:singular}, and Lemma \ref{lem:vercond2} show that the system defined in  \eqref{eq:exmap} satisfies the assumptions of Theorem \ref{thm:implicitg}. 
We can thus consider the curve starting from the unique physical invariant measure at $\nu=0$ and follow it till its first turning point. Note that, by Lemma \ref{lem:noturn}, there can be only one invariant measure, and hence no turning points, for all $\nu\leq \frac 4{3\mu}$. Hence, once we reach a turning point, the curve cannot go back to $\frac 4{3\mu}$ and must have another turning point. Let $\nu_1$ be the first turning by Lemma \ref{lem:singular} the curve must meet another turning point $\nu_2\geq \nu_1-2\pi-c_+$. We can then keep following the curve till the next turning point $\nu_3$ that, always by Lemma \ref{lem:singular} must occur before $\nu_2+2\pi+c_+$. Note that Theorem \ref{thm:implicitg} shows that the curve cannot backtrack in the interval $(\nu_2,\nu_1)$ infinitely many times, hence the curve must keep moving toward increasing values of $\nu$ till it exists the interval $(0,c\mu^{-2})$. The curve will have at least $C\mu^{-1}$ turning points $\nu_i$, for some constant $C>0$.
Lemma \ref{lem:singular} implies then that for each $\nu\geq \frac 53\mu^{-1}$, $\nu\neq \nu_i$, there must at least three invariant measures.

To verify which of the invariant measures are physical, we use Proposition \ref{prop:stability} and Theorem \ref{lem:physical}. Recall that equation \eqref{eq:Q} implies that, for $\mu$ small enough, the essential spectral radius of $\cL_{T_{\nu, h_\nu}}$ is smaller than $1/2$.
We can compute as in \eqref{eq:turning} and obtain, for $z\in \bC$, $|z|> 1/2$,
\begin{equation}\label{eq:physical}
\begin{split}
&\Xi(z)=\int\alpha\Theta(z)=-\nu z^{-1}\int\alpha(\Id -z^{-1}\cL_{T_{\nu, h_\nu}})^{-1}\partial_\beta h_\nu.
\end{split}
\end{equation}
This allows us to compute
\begin{equation}\label{eq:physical1}
\begin{split}
&\Xi(z)=\int\alpha\Theta(z)=-\nu z^{-1}\int\alpha(\Id -z^{-1}Q_{\nu})^{-1}\partial_\beta h_\nu\\
&=-\nu z^{-1}\int\alpha\partial_\beta h_\nu +\cO(\nu\mu^3)=\nu z^{-1}\int\partial_\beta\alpha\cdot h_\nu+\cO(\mu^2)\\
&=-\nu z^{-1}\mu\int \sin \langle \beta,x+\nu\beta\omega\rangle \left[  \beta_1\sin\langle \beta,x\rangle-\beta_2\cos \langle \beta,x\rangle\right]+\cO(\nu\mu^3)\\
&=-\nu \frac{\mu}{2 z}\left[\beta_1\cos\nu \omega-\beta_2\sin\nu\omega\right]+\cO(\nu\mu^3)\\
&=\nu \frac{\mu}{2 z}\sin(\nu\omega-\vartheta)+\cO(\nu\mu^3).
\end{split}
\end{equation} 
We are interested in computing the maximal eigenvalue of $\cD_h$ near a turning point $\tau_0$, where $\Xi(1)=1$. For $\tau\neq \tau_0$, Proposition \ref{prop:stability} implies that 1 is an eigenvalue and the other eigenvalues of modulus bigger than 1/2 are the solutions of the equation $\Xi(z)=1$.  Note that \eqref{eq:physical1} implies that, for $|\nu-\nu_0|\leq c_1\mu$, $c_1$ small enough, $\Xi(z)=1$ can have solutions only if $z\in \Omega:=\{z\in\bC\;:\;|\Im(z)|\leq C_\sharp \nu_0\mu^3\leq C_\sharp\mu;\; |\Re(z)-1|\leq c C_\sharp\}$, for some constant $C_\sharp>0$. If we consider $|\tau-\tau_0|\leq c_2\mu^3$, then Lemmata \ref{lem:turn} and \ref{lem:vercond2} imply that $|\nu(\tau)-\nu_0|\leq c_1\mu$, provided $c_2$ is chosen small enough.

Next, note that, recalling  \eqref{eq:physical1},
\[
\begin{split}
\partial_z\Xi(z)&=\nu \int\alpha (\Id z-\cL_{T_{\nu(\tau),h_{\nu(\tau)}}})^{-2}\partial_\beta h_{\nu(\tau)}
=z^{-2}\nu \int\alpha \partial_\beta h_{\nu(\tau)}+\cO(\nu\mu^3)\\
&=-z^{-1}\Xi(z)+\cO(\nu\mu^3).
\end{split}
\]
Thus, for $\tau=\tau_0$ we have $\partial_z\Xi(1)=-1+\cO(\nu\mu^3)$. 
On the other hand,
\[
\begin{split}
&\partial_{\tau}\Xi(z)=-\nu'z^{-1}\int\alpha(\Id -z^{-1}\cL_{T_{\nu, h_\nu}})^{-1}\partial_\beta h_\nu\\
&-\nu z^{-2}\int\alpha(\Id -z^{-1}\cL_{T_{\nu, h_\nu}})^{-1}\left[\partial_h\cL_{T_{\nu, h_\nu}}\partial_\tau h_\nu+\partial_\nu \cL_{T_{\nu, h_\nu}}\nu'\right]\\
&\times(\Id -z^{-1}\cL_{T_{\nu, h_\nu}})^{-1}\partial_\beta h_\nu\\
&-\nu z^{-1}\int\alpha(\Id -z^{-1}\cL_{T_{\nu, h_\nu}})^{-1}\partial_\beta \partial_\tau h_\nu.
\end{split}
\]
Hence, for $\tau=\tau_0$, recalling Lemma \ref{lem:turn}, and equations \eqref{eq:basic_der}, \eqref{eq:h-der},
\[
\begin{split}
\partial_{\tau}\Xi(z)|_{\tau=\tau_0}=&-\nu_0 z^{-2}\int\alpha(\Id -z^{-1}Q_{\nu_0})^{-1}\partial_\beta \left[\cL_{T_{\nu, h_\nu}}(\Id -z^{-1}Q_{\nu_0})^{-1}\partial_\beta h_\nu\right]\\
&\times \int\alpha \Theta(\nu_0,h_{\nu_0})
+\nu_0 z^{-1}\int\alpha(\Id -z^{-1}Q_{\nu_0})^{-1}\partial_\beta \Theta(\nu_0,h_{\nu_0})\\
=&- z^{-2}\int\alpha(\Id -z^{-1}Q_{\nu_0})^{-1}\partial_\beta \left[Q_{\nu_0}(\Id -z^{-1}Q_{\nu_0})^{-1}\partial_\beta h_\nu\right]\\
&+\nu_0 z^{-1}\int\alpha(\Id -z^{-1}Q_{\nu_0})^{-1}\partial_\beta \Theta(\nu_0,h_{\nu_0}).
\end{split}
\]
Recalling \eqref{eq:Q}, \eqref{eq:Theta}, \eqref{eq:h} and \eqref{eq:ab}, we obtain
\[
\begin{split}
\partial_{\tau}\Xi(z)|_{\tau=\tau_0}=&\nu_0 z^{-1}\int\alpha \partial_\beta \Theta(\nu_0,h_{\nu_0})+\cO(\mu^3)\\
=&\nu_0 z^{-1}\int[\partial_\beta^2\alpha ]  h_{\nu_0}+\cO(\mu^3)\\
=&-\nu_0 \mu z^{-1}\int\cos\langle \beta,x\rangle a(x-\nu_0\beta\omega)+\cO(\mu^2\nu_0)\\
=&-\nu_0 \mu z^{-1}\int\cos\langle \beta,x+\nu_0\beta\omega\rangle\left[\beta_1\sin\langle \beta,x\rangle-\beta_2\cos \langle \beta,x\rangle\right]+\cO(\mu^2\nu_0)\\
&=-\frac{\nu_0}2 \mu z^{-1}\left[\beta_1\cos\nu_0\beta\omega-\beta_2\sin\nu_0\beta\omega\right]+\cO(\mu^2\nu_0)\\
&=\frac{\nu_0}2 \mu z^{-1}\sin(\nu\omega-\vartheta)+\cO(\mu^2\nu_0)=\frac{\nu_0}{z\nu}\Xi(z)+\cO(\mu^2\nu_0).
\end{split}
\]
We can then apply the implicit function Theorem \ref{thm:implicit-func} which, provided that $c$ is small enough, implies that in $\Omega$, all the solutions $z(\tau)$ of $\Xi(z)=1$ satisfy
\[
\begin{split}
z(\tau)&=1+z'(\tau)(\tau-\tau_0)+\cO(\tau-\tau_0)^2=1-\frac{\partial_\tau\Xi(1)}{\partial_z\Xi(1)}(\tau-\tau_0)+\cO(\tau-\tau_0)^2\\
&=1+\partial_\tau\Xi(1)(\tau-\tau_0)+\cO(\tau-\tau_0)^2.
\end{split}
\]
Accordingly, near the turning point, we have that the only eigenvalues outside the disk of radius $\frac 12$ are $1$ and
\[
z(\tau)=1-\left\{1+\cO(\mu^2\nu_0)\right\}(\tau-\tau_0)+\cO(\tau-\tau_0)^2.
\]
 It follows that at a turning point $z$ crosses the unit circle, and hence, only one branch can be physical. 
\end{proof}

\subsection{Phase transition}\label{sec:phase}\ \\
{\em Phase transition} in statistical mechanics usually means a situation where finite size systems have a unique equilibrium state,  while the corresponding infinite system has more than one. It is then natural to enquire if the multiple states that we have found in our example correspond to a phase transition or not. In this subsection, we show that we are in the presence of a real phase transition.\\
We start by recalling the discussion in \cite{BLS23} about the sense in which the systems we are discussing in this paper are the limit of finite size systems. 
Let $T_0= Ax\mod 1$, $A\in\operatorname{SL}(2,\bN)$, $T=\rho T_0^{n_*}\rho^{-1}$ as in the previous subsections. 
Moreover, we choose $\beta$ so that $T_0\beta=\lambda \beta$,  $\lambda>1$. To simplify the argument, we assume that $A^*=A$. Finally, we define $T_{N,\nu}:\bT^{2N}\to\bT^{2N}$ as
\[
(T_{N,\nu}(x))_i=T(x_i)+\frac{\nu}N\sum_{j=1}^N\beta \alpha(x_j).
\]
Suppose that $\mu_N=\frac 1N\sum_{i=1}^N\delta_{x_i}$ converges weakly to the measure $h(y) dy$. Then, for each $\vf\in \operatorname{Lip}( \bT^{d},\bR)$ we have
\begin{align}
\lim_{N\to\infty}\int\vf\circ  (T_{N,\nu}(x)) \mu_N&=\lim_{N\to\infty}\frac 1N\sum_{i=1}^N\vf\left(T(x_i)+\frac{\nu}N\sum_{j=1}^N \beta\alpha(x_j)\right)\nonumber\\
&=\lim_{N\to\infty}\frac 1N\sum_{i=1}^N\vf\left(T(x_i)+\nu\beta\int \alpha h\right)\label{eq:dynamicfi}\\
&=\int \vf\circ\left(T(x)+\nu\beta\int \alpha h\right) h(x) dx\nonumber\\
&=\int \vf\cL_{T_{\nu,h}} h=\int \vf \widetilde \cL_{\nu} h.\nonumber
\end{align}
Thus, $\widetilde \cL_{\nu}$ can be interpreted as the evolution of the density of particles for the infinite system arising from the dynamics $T_{N,\nu}$, see \cite{K00} for more details. \\
The question about phase transition can then be recast as: 
\begin{itemize}
\item {\em how many physical measures does $(\bT^{dN}, T_{N,\nu})$ have? }
\end{itemize}
To answer, we need to understand better the map  $T_{N,\nu}$.
\begin{lemma} There exist $\mu_\star<\frac 12$ such that, for each each $N\in\bN$, $|\mu|\leq \mu_\star$, $\nu\in\bR$, the map $T_{N,\nu}$ is Anosov.
\end{lemma}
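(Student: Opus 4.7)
The plan is to establish the three defining ingredients of the Anosov property for $T_{N,\nu}$: that it is a $C^\infty$ diffeomorphism of $\bT^{2N}$, that there is a continuous $DT_{N,\nu}$-invariant splitting of the tangent bundle, and that this splitting is uniformly hyperbolic. The structural observation driving the whole proof is the decomposition
\[
DT_{N,\nu}(x) = \widetilde{DT}(x) + \nu P(x),
\]
where $\widetilde{DT}(x) = \operatorname{diag}(DT(x_1),\ldots,DT(x_N))$ is block-diagonal and $\nu P(x)$ is the rank-one operator $(\beta,\ldots,\beta)\otimes\nabla\bar\alpha(x)^T$; in particular the image of $\nu P$ is contained in the single one-dimensional subspace $\operatorname{span}\{(\beta,\ldots,\beta)\}\subset\bR^{2N}$. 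Since $\beta$ is the unstable eigenvector of $T_0$ and $T=\rho T_0^{n_*}\rho^{-1}$ is a small deformation of $T_0^{n_*}$ for $\mu$ small, the fixed cone $\cC_*$ from the proof of Lemma~\ref{lem:diffdist} contains $\beta$ deep in its interior, so the entire coupling acts along a direction that is already strongly expanded by the diagonal part.

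First I would verify that $T_{N,\nu}$ is a $C^\infty$ diffeomorphism. The matrix determinant lemma applied to the rank-one update gives
\[
\det DT_{N,\nu} = \det\widetilde{DT}\cdot\Bigl(1+\frac{\nu}{N}\sum_{i=1}^N \nabla\alpha(x_i)^T DT(x_i)^{-1}\beta\Bigr),
\]
and since $DT(x)^{-1}\beta = \lambda^{-n_*}\beta+O(\mu)$ the correction is $O(|\nu|\lambda^{-n_*})$, hence bounded away from $-1$ in the relevant parameter range; this keeps $\det DT_{N,\nu}$ bounded away from zero. Global bijectivity then follows from a homotopy/degree argument: the family $\nu\mapsto T_{N,\nu}$ passes through local diffeomorphisms all homotopic to the bijective $\widetilde T$, hence each has degree $\pm 1$ and is invertible.

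Next I would produce the invariant splitting through invariant cones. Take the product cone $\widetilde\cC^u := \cC_*^N$; for $v=(v_1,\ldots,v_N)\in\widetilde\cC^u$, each $(\widetilde{DT}v)_i = DT(x_i)v_i$ is strictly inside $\cC_*$ by the cone invariance of Lemma~\ref{lem:diffdist}, and each block of $\nu Pv$ is a real multiple of $\beta\in\cC_*$. Writing $v_i = a_iv^u + b_iv^s$ with $|b_i|\leq\kappa_*|a_i|$, one checks block-wise that the coupling either is negligible compared with the factor $\lambda^{n_*}a_i$ (in which case the block stays deeply in $\cC_*$) or dominates it (in which case the block aligns with $\beta$, the axis of $\cC_*$); either way the new ratio $|b'_i|/|a'_i|$ stays below $\kappa_*$ for $n_*$ large and $\mu$ small. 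Uniform expansion in an adapted Lyapunov norm is obtained by decomposing $\bR^{2N}=V_0\oplus V_1$ into the ``diagonal'' direction $V_0=\operatorname{span}\{(\beta,\ldots,\beta),(v^s,\ldots,v^s)\}$ and its orthogonal complement; the coupling acts trivially on $V_1$, where $\widetilde{DT}$ already expands by $\lambda^{n_*}(1-O(\mu))$, and on $V_0$ the dynamics reduces to a two-dimensional linear map whose hyperbolicity survives for $\mu$ small. The stable sub-bundle is produced by the symmetric analysis on $DT_{N,\nu}^{-1}$: by the Sherman--Morrison formula this inverse is $\widetilde{DT}^{-1}$ plus a rank-one perturbation again with image along the unstable direction, which leaves the product stable cone $(\cC_*^s)^N$ invariant and uniformly contracted.

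The main obstacle is the expansion estimate in the previous step: the rank-one perturbation $\nu P$ is not $C^1$-small (its operator norm is $\asymp|\nu|$ independently of $N$), so the standard structural stability of the product Anosov map $\widetilde T$ is not available. The argument must instead exploit the special rank-one structure of $P$ together with the fact that $\operatorname{Im}(P)$ lies in the strongly expanded direction of $\widetilde{DT}$, so that the expansion rate $\lambda^{n_*}$ of $\widetilde{DT}$ can absorb the coupling; concretely, one needs $n_*$ large relative to $\log|\nu|$, which is exactly the regime fixed by Proposition~\ref{prop:exsols} via $n_*\geq c_+\ln\mu^{-1}$.
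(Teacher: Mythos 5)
Your high-level strategy --- exploit that the coupling is rank one with image along $(\beta,\dots,\beta)$, where $\beta$ is the strongly expanded direction, and take $n_*$ large so that the expansion $\lambda^{n_*}$ absorbs the coupling --- is exactly the paper's. The concrete cone construction, however, has a genuine gap. The product cone $\cC_*^N$ is \emph{not} invariant under $DT_{N,\nu}$: the dichotomy you invoke (``the coupling is either negligible compared with $\lambda^{n_*}a_i$ or dominates it'') omits the intermediate regime in which the coupling nearly cancels the unstable component of an individual block. Concretely, take $v_1\in\cC_*$ small with a nonzero stable component and $v_2$ large along $v^u$, tuned so that $\frac{\nu}{N}\sum_j\langle\nabla\alpha(x_j),v_j\rangle\,\beta$ cancels the unstable component of $DT(x_1)v_1$; then the first block of the image is essentially purely stable and leaves $\cC_*$, even though $v\in\cC_*^N$. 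Nothing in the blockwise cone condition ties the sizes of different blocks together, so this cannot be excluded. This is precisely why the paper does not use a product cone: it sets $W=\{(\beta u_1,\dots,\beta u_N)\}$, lets $\pi$ be the orthogonal projection onto $W$, and works with the \emph{global} cone $\cC_+=\{v:\ \|(\Id-\pi)v\|\le\sqrt{\mu}\,\|\pi v\|\}$, in which a single block is allowed to degenerate as long as the total $W$-component dominates in norm; invariance and expansion are then proved for the aggregate quantities $\|\pi v\|$ and $\|(\Id-\pi)v\|$, with the coupling controlled by Cauchy--Schwarz.

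The auxiliary splitting $\bR^{2N}=V_0\oplus V_1$ does not repair this, because it does not have the properties you assign to it: the coupling does not act trivially on $V_1$ (its kernel is $\{v:\sum_j\langle\nabla\alpha(x_j),v_j\rangle=0\}$, and $\nabla\alpha(x_j)$ varies with $j$); $\widetilde{DT}$ does not preserve the splitting, since its blocks $DT(x_i)$ differ; and $\widetilde{DT}$ certainly does not expand all of $V_1$, which contains contracted vectors such as $(v^s,-v^s,0,\dots,0)$. A further quantitative error: $DT(x)^{-1}\beta$ is \emph{not} $\lambda^{-n_*}\beta+O(\mu)$. Writing $DT^{-1}=D\rho\,A^{-n_*}D\rho^{-1}$, the $O(\mu)$ stable component produced by $D\rho^{-1}$ is amplified by $A^{-n_*}$ to size $O(\mu\lambda^{n_*})=O(\mu^{-2})$ in the regime $\lambda^{-n_*}\le\mu^3$. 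This invalidates both your determinant bound and the claim that the Sherman--Morrison correction to $DT_{N,\nu}^{-1}$ points along the unstable direction (it is in fact dominated by its stable component, which is consistent with --- indeed necessary for --- the invariance of the paper's complementary cone $\cC_-$ under the inverse). Finally, you are right that a restriction of the form $|\nu|\le c\mu^{-2}$ with $\lambda^{-n_*}\le\mu^3$ is needed; the paper's own proof imposes it even though the statement reads ``for each $\nu\in\bR$''.
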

\begin{proof}
Let $W=\{(\beta u_1, \dots, \beta u_N)\;:\; u_i\in\bR\}$. Let $\pi$ be the orthogonal projection on $W$ and 
\[
\cC_+=\{v\in\bR^{dN}\;:\; \|(\Id-\pi) v\|\leq \sqrt \mu \|\pi v\|\}.
\]
Let $v\in\cC_+$, then we can write $v=a+\sqrt \mu b$, where $a\in W$ and $b\in W^\perp$. Then, recalling \eqref{eq:rhoinv},
\[
\begin{split}
(D T_{N,\nu} v)_i&= (D\rho DT_0^{n_*} D\rho^{-1} (a+\sqrt \mu b))_i+\frac{\nu}N\sum_{j=1}^N\beta \langle \nabla\alpha(x_j), v_j\rangle\\
&= (D\rho DT_0^{n_*}[ \tilde a+\sqrt\mu \tilde b])_i+\frac{\nu}N\sum_{j=1}^N\beta \langle \nabla\alpha(x_j), v_j\rangle\\
&= (D\rho [\lambda^{n_*} \tilde a+\lambda^{-n_*}\sqrt \mu\tilde b])_i+\frac{\nu}N\sum_{j=1}^N\beta \langle \nabla\alpha(x_j), v_j\rangle
\end{split}
\]
where $\tilde a\in W$, $\tilde b\in W^\perp$ with $\|a-\tilde a\|\le C\mu$ and $\|b-\tilde b\|\le C\sqrt\mu$. Since, by the Cauchy-Schwarz inequality,
\[
\left\|\left(\frac 1N\sum_{j=1}^N\beta \langle \nabla\alpha(x_j), v_j\rangle\right)\right\|^2\le N^{-1}\left(\sum_{j=1}^N\| \nabla\alpha(x_j)\|\| v_j\|\right)^2\leq C\|v\|^2,
\]
we get
\[
\left\|\pi \left[D T_{N,\nu} v\right]\right\|\geq\frac{ \lambda^{n_*}}2\|v\|-C|\nu|\|v\|.
\]
Accordingly, for $|\nu|\leq c\mu^{-2}$ for $c$ small enough and choosing $n_*$ such that $\lambda^{-n_*}\leq\mu^3$,
\[
\begin{split}
&\left\|\pi \left[D T_{N,\nu} v\right]\right\|\geq \frac{ \lambda^{n_*}}4\|v\|\\
&\left\|(\Id-\pi) \left[D T_{N,\nu} v\right]\right\|\leq \lambda^{-n_*}\|v\|+\Const\mu\lambda^{n_*}\|v\|\leq \Const\mu\lambda^{n_*}\|v\|\\
&\phantom{\left\|(\Id-\pi) \left[D T_{N,\nu} v\right]\right\|}
\leq \const C\mu \left\|\pi \left[D T_{N,\nu} v\right]\right\|\leq \sqrt \mu\left\|\pi \left[D T_{N,\nu} v\right]\right\|
\end{split}
\]
for $\mu$ small enough. Therefore, $\cC_+$ is an invariant cone and, for each $v\in \cC_+$, $\|DT_{N,\nu}v\|\geq \frac{\mu^{-3}}8\|v\|$.
Similar computations apply to the cone
\[
\cC_-=\{v\in\bT^{dN}\;:\;\|\pi v\| \leq \sqrt \mu \|(\Id-\pi)v\| \}
\]
and the map $T_{N,\nu}^{-1}$. Thus, $T_{N,\nu}$ is an Anosov map.
\end{proof}
Since the map $T_{N,\nu}$ is topologically  mixing \cite[Proposition 18.6.5]{KH95}, it follows that the push forward of any absolutely continuous measure converges to the SRB measure $\mu^N_{SRB}$, e.g. \cite[Theorem 2.3]{GL06}. In turn, this implies that for Lebesgue almost all $x\in\bT^{2N}$
\begin{equation}\label{eq:srb}
\lim_{n\to\infty}\frac{1}n\sum_{k=0}^{n-1}\delta_{T_{N,\nu}^k(x)}=\mu^N_{SRB}.
\end{equation}
That is $\mu^N_{SRB}$ is the unique physical measure.\footnote{ Recall that in this finite dimensional setting, for a measure $\mu$ to be physical, it is enough that the following holds
\begin{equation}\label{eq:phys}
\lim_{n\to\infty}\frac{1}n\sum_{k=0}^{n-1}\delta_{T_{N,\nu}^k(x)}=\mu
\end{equation} 
for a set of points with positive Lebesgue measure.}
\begin{remark}
Note that the definitions of physical measure for the finite and infinite system differ substantially  (compare Definition \ref{def:physical} with footnote 18). However, they both have a fundamental character in common: they refer to measures that are physically observable.
\end{remark}
Let us connect the finite-dimensional case and the infinite-dimensional one. For each $\vf\in\cC^0(\bT^2,\bR)$, defining $\pi_i:\bT^{2N}\to\bT^2$ as $\pi_i(x)=x_i$, we can write
\[
\frac{1}N\sum_{i=1}^N\vf(x_i)=\left[\frac{1}N\sum_{i=1}^N\vf\circ\pi_i\right](x).
\]
This means that if $x\in\bT^{2N}$ is distributed according to an exchangeable measure $\mu^N$,\footnote{ Recall that an exchangeable measure is a measure on a product space which is invariant under finite permutations of the coordinates.} and we are interested in observables $\Phi_N$ of the form 
\[
\Phi_N(x)=\left[\frac{1}N\sum_{i=1}^N\vf\circ\pi_i\right](x), 
\]
with $\vf\in\cC^0(\bT^2,\bR)$, then
\[
\int \Phi_N(x) \mu^N(dx)=\int \vf(y) (\pi_1)_*\mu^N(dy)\\
\]
\begin{lemma}
The SRB measure $\mu^N_{SRB}$ is exchangeable.
\end{lemma}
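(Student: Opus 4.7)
The plan is to exploit the intrinsic symmetry of $T_{N,\nu}$ under permutation of coordinates, combined with the uniqueness of the SRB (physical) measure established just before this lemma.

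For a permutation $\sigma\in S_N$, let $P_\sigma:\bT^{2N}\to\bT^{2N}$ be the coordinate permutation $(P_\sigma x)_i=x_{\sigma(i)}$. The first step is to observe that $T_{N,\nu}$ commutes with $P_\sigma$. Indeed, by definition
\[
(T_{N,\nu}(P_\sigma x))_i = T(x_{\sigma(i)}) + \frac{\nu}{N}\sum_{j=1}^N \beta\,\alpha(x_{\sigma(j)}),
\]
while
\[
(P_\sigma T_{N,\nu}(x))_i = T(x_{\sigma(i)}) + \frac{\nu}{N}\sum_{j=1}^N \beta\,\alpha(x_j),
\]
and the two expressions coincide since the sum is symmetric in $j$. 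Hence $T_{N,\nu}\circ P_\sigma = P_\sigma\circ T_{N,\nu}$.

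The second step is trivial: $P_\sigma$ is a measure-preserving automorphism of Lebesgue measure $\mathrm{Leb}_N$ on $\bT^{2N}$.

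For the final step, I would invoke the convergence \eqref{eq:srb} (or, equivalently, that $\mu^N_{SRB}$ is the unique physical measure arising from Lebesgue-typical initial conditions). For any $\varphi\in\cC^0(\bT^{2N},\bR)$ and Lebesgue-a.e.\ $x$, \eqref{eq:srb} gives
\[
\int\varphi\circ P_\sigma\,d\mu^N_{SRB} = \lim_{n\to\infty}\frac{1}{n}\sum_{k=0}^{n-1}\varphi(P_\sigma T_{N,\nu}^k(x)) = \lim_{n\to\infty}\frac{1}{n}\sum_{k=0}^{n-1}\varphi(T_{N,\nu}^k(P_\sigma x)),
\]
using commutativity at each iterate. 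Since $P_\sigma$ preserves $\mathrm{Leb}_N$, the image point $P_\sigma x$ is again Lebesgue-typical, so by \eqref{eq:srb} the last limit equals $\int\varphi\,d\mu^N_{SRB}$. Therefore $(P_\sigma)_*\mu^N_{SRB}=\mu^N_{SRB}$ for every $\sigma\in S_N$, i.e.\ $\mu^N_{SRB}$ is exchangeable.

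There is essentially no obstacle here beyond being careful that the Lebesgue-typical set in \eqref{eq:srb} is preserved under $P_\sigma$, which is immediate from $P_\sigma$ being a Lebesgue-measure automorphism. Alternatively, one could bypass \eqref{eq:srb} entirely and argue via transfer operators: $(P_\sigma)_*\mu^N_{SRB}$ is $T_{N,\nu}$-invariant (by the commutation) and absolutely continuous on unstable manifolds (a property preserved by the smooth conjugacy $P_\sigma$), hence by uniqueness of the SRB measure it must coincide with $\mu^N_{SRB}$.
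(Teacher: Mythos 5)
Your proposal is correct and follows essentially the same route as the paper: both hinge on the commutation $T_{N,\nu}\circ P_\sigma=P_\sigma\circ T_{N,\nu}$ together with the characterization of $\mu^N_{SRB}$ as the unique limit stated just before the lemma (the paper pushes forward an absolutely continuous density via $\lim_n\int\psi\circ\sigma\circ T_{N,\nu}^n h$ and changes variables, while you use the Birkhoff averages of \eqref{eq:srb} at the Lebesgue-typical point $P_\sigma x$ — an immaterial variant). Your closing remark that one could instead invoke uniqueness of the SRB measure directly for the $T_{N,\nu}$-invariant measure $(P_\sigma)_*\mu^N_{SRB}$ is also fine.
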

\begin{proof}
Equation \eqref{eq:srb} implies that, for Lebesgue almost all $x\in\bT^{2N}$,
\[
\lim_{n\to\infty}\frac{1}n\sum_{k=0}^{n-1}\left[\frac{1}N\sum_{i=1}^N\vf\circ\pi_i\right](T^n_{N,\nu}(x))=\frac{1}N\sum_{i=1}^N\mu^N_{SRB}(\vf\circ\pi_i).
\]
On the other hand if $\sigma:\{1,\dots, N\}\to\{1,\dots, N\}$ is a permutation, then $\sigma\circ T_{N,\nu}=T_{N,\nu}\circ \sigma$, hence for each $\psi, h\in\cC^0(\bT^{2N},\bR)$ and $\int h=1$, 
\[
\begin{split}
\mu^N_{SRB}(\psi\circ\sigma)&=\lim_{n\to\infty}\int\psi\circ \sigma\circ T_{N,\nu}^n h=\lim_{n\to\infty}\int\psi\circ T_{N,\nu}^n h\circ\sigma^{-1}\\
&=\mu^N_{SRB}(\psi)\int h\circ\sigma^{-1}=\mu^N_{SRB}(\psi).
\end{split}
\]
Thus, $\sigma_*\mu^N_{SRB}=\mu^N_{SRB}$, that is the measure $\mu^N_{SRB}$ is exchangeable. 
\end{proof}

Accordingly, $(\pi_1)_*\mu^N_{SRB}$ is the unique measure that plays the role of the physical invariant measure for the system of size $N$ and for the relevant observables.
It follows that we are witnessing a real phase transition: the \emph{non uniqueness} of the physical invariant measure is an infinite dimensional phenomenon.
\appendix

\section{A quantitative Implicit Function Theorem}\label{sec:implicit}
Since it is not readily available in the literature, in this appendix we provide, for the reader's convenience, a quantitative version of the implicit function for functions between Banach spaces.

Let $\cB$, and $\cP$ be Banach spaces and let
\[
F\in   \cC^0(\cB\times \cP,\cB).
\] 
be Fréchet differentiable.
Let $(x_0,\lambda_0)\in \cB\times\cP$ such that $F(x_0,\lambda_0)=0$. For each $\delta>0$ let $V_{\delta}=\{(x,\lambda)\in\cB\times\cP\}\;:\; \|x-x_0\|_{\cB}\leq \delta, \|\lambda-\lambda_0\|_{\cP}\leq \delta\}$.

\begin{theorem} \label{thm:implicit-func} Assume that $F(x_0,\lambda_0)=0$, $\partial_{x}F(x_0,\lambda_0)$ is invertible and assume there exists $\Upsilon,\delta>0$ such that 
\[
\begin{split}
&\sup_{(x,\lambda)\in V_\delta}\|\Id -[\partial_{x}F(x_0,\lambda_0)]^{-1} \partial_{x}F(x,\lambda)\|_{\cB}\leq \frac 12\\
&\sup_{(x,\lambda)\in V_\delta}\|[\partial_{x}F(x_0,\lambda_0)]^{-1} \partial_{\lambda}F(x,\lambda)\|_{\cB}\leq \Upsilon.
\end{split}
\]
Let $\Delta:=\{\lambda\in\cP\::\;\|\lambda-\lambda_0\|_\cP\leq \frac{\delta}{2\Upsilon}\}$. Then there exists  $g\in\cC^0(\Delta,\cB)$, Fréchet differentiable, such that all the solutions of the equation $F(x,\lambda)=0$ in the set $\cK:=\left\{(x,\lambda)\in\cB\times\cP\;:\; \|\lambda-\lambda_0\|_\cP\leq \frac{\delta}{2\Upsilon}, \|x-x_0\|_{\cB}\leq \delta\right\}$ are given by $\{(g(\lambda),\lambda)\;:\;\lambda\in\Delta\}$. In addition,
\[
\partial_\lambda g(\lambda)=-(\partial_x F(g(\lambda),\lambda))^{-1}\partial_\lambda F(g(\lambda),\lambda).
\]
\end{theorem}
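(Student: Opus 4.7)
The plan is to apply the classical Banach contraction argument to a Picard-type map, with the contraction constant $1/2$ and the radius of the ball in $\cP$ chosen precisely so that both hypotheses are used sharply. Set $A:=[\partial_x F(x_0,\lambda_0)]^{-1}$ and, for $\lambda\in\Delta$, define
\[
\Phi_\lambda(x):=x-AF(x,\lambda),
\]
so that $\Phi_\lambda(x)=x$ iff $F(x,\lambda)=0$. The first hypothesis gives
\[
\|\partial_x\Phi_\lambda(x)\|_{\cB}=\|\Id-A\,\partial_xF(x,\lambda)\|_{\cB}\leq\tfrac12\qquad\forall(x,\lambda)\in V_\delta,
\]
so the mean value inequality (valid for Fréchet differentiable maps on a convex set) yields
\[
\|\Phi_\lambda(x_1)-\Phi_\lambda(x_2)\|_{\cB}\le\tfrac12\|x_1-x_2\|_{\cB}
\]
for $x_1,x_2\in B_\delta(x_0)$ and $\lambda\in\Delta$.

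For the self-mapping property, I would estimate $\Phi_\lambda(x_0)-x_0=-A F(x_0,\lambda)$ by writing $F(x_0,\lambda)-F(x_0,\lambda_0)=\int_0^1\partial_\lambda F(x_0,\lambda_0+t(\lambda-\lambda_0))(\lambda-\lambda_0)\,dt$ and applying the second hypothesis to get $\|\Phi_\lambda(x_0)-x_0\|_{\cB}\le\Upsilon\|\lambda-\lambda_0\|_{\cP}\le\delta/2$. Combined with the contraction estimate, this gives $\Phi_\lambda(B_\delta(x_0))\subset B_\delta(x_0)$, and Banach's fixed point theorem produces a unique $g(\lambda)\in B_\delta(x_0)$ with $\Phi_\lambda(g(\lambda))=g(\lambda)$, hence $F(g(\lambda),\lambda)=0$; uniqueness in $\cK$ follows because any solution automatically lies in the ball where the contraction is valid. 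Continuity of $\lambda\mapsto g(\lambda)$ is the standard consequence of the uniform contraction.

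For differentiability, I would first note that on $V_\delta$ the Neumann series gives $A\,\partial_xF(x,\lambda)=\Id-(\Id-A\,\partial_xF(x,\lambda))$ invertible, hence $\partial_xF(x,\lambda)$ itself is invertible with uniformly bounded inverse. Then one subtracts $F(g(\lambda_1),\lambda_1)=F(g(\lambda_2),\lambda_2)=0$ and Taylor-expands to first order to obtain
\[
\partial_xF(g(\lambda_2),\lambda_2)(g(\lambda_1)-g(\lambda_2))+\partial_\lambda F(g(\lambda_2),\lambda_2)(\lambda_1-\lambda_2)=o(\|\lambda_1-\lambda_2\|_{\cP}),
\]
which yields both Fréchet differentiability of $g$ and the stated formula $\partial_\lambda g(\lambda)=-[\partial_xF(g(\lambda),\lambda)]^{-1}\partial_\lambda F(g(\lambda),\lambda)$.

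The main obstacle in making this quantitative is ensuring that the two radii match: the ball of radius $\delta/(2\Upsilon)$ in $\cP$ is dictated precisely by the requirement $\Upsilon\|\lambda-\lambda_0\|_{\cP}\le\delta/2$, so that a ball of radius $\delta/2$ is left free for the contraction to shrink $\Phi_\lambda$ back into $B_\delta(x_0)$. A minor bookkeeping point is that when $\Upsilon<1/2$ one should replace $\delta/(2\Upsilon)$ by $\min(\delta,\delta/(2\Upsilon))$ to stay inside $V_\delta$, but nothing else in the argument changes.
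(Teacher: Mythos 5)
Your proposal is correct and follows essentially the same route as the paper's proof: the same Picard map $x\mapsto x-[\partial_xF(x_0,\lambda_0)]^{-1}F(x,\lambda)$, the same mean-value contraction and self-mapping estimates, and the same first-order expansion for differentiability. Even your closing remark about replacing $\delta/(2\Upsilon)$ by $\min(\delta,\delta/(2\Upsilon))$ matches the paper, which sets $\delta_1:=\min\{\delta,\delta/(2\Upsilon)\}$ for exactly that reason.
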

\begin{proof}
Let  $A(x,\lambda)=\partial_{x}F(x,\lambda)$, $M=\|A(x_0,\lambda_0)^{-1}\|$.

Let $\lambda$ be such that $\|\lambda-\lambda_0\|_\cP<\delta_1\leq\delta$. Consider $U_\delta=\{x\in\cB\;:\;\|x-x_0\|_{\cB}\leq \delta\}$ and the function $\Theta_\lambda:U_\delta \to \cB$ defined by
\begin{equation}\label{eq:Newton-imp}
\Theta_\lambda(x)=x-A(x_0,\lambda_0)^{-1}F(x,\lambda).
\end{equation}
If $F(x,\lambda)=0$ then $\Theta_\lambda(x)=x$, while if $\Theta_\lambda(x)=x$, $F(x,\lambda)=0$. 
Next,
\[
\|\partial_x\Theta_\lambda\|=\|\Id-A(x_0,\lambda_0)^{-1}A(x,\lambda)\|\leq \frac 12
\]
for all $(x,\lambda)\in V_\delta$. So
\[
\|\Theta_\lambda(x_0)-x_0\|=\|\Theta_\lambda(x_0)-\Theta_{\lambda_0}(x_0)\|\leq \left\|\int_0^1\partial_\lambda\Theta_\xi(x_0)(\lambda-\lambda_0) d\xi\right\|\leq \Upsilon\|\lambda-\lambda_0\|_{\cP}.
\]
Thus, for $\|x-x_0\|\leq \delta$ and $\|\lambda-\lambda_0\|_{\cP}\leq \frac{\delta}{2\Upsilon}$, we have
\[
\begin{split}
\|\Theta_\lambda(x)-x_0\|&\leq \|\Theta_\lambda(x)-\Theta_\lambda(x_0)\|+\|\Theta_\lambda(x_0)-\Theta_{\lambda_0}(x_0)\|\\
&\leq \frac 12 \|x-x_0\|+\Upsilon|\lambda-\lambda_0|\leq \delta.
\end{split}
\]
It follows that $K_\delta=\{x\in\cB\;:\;\|x-x_0\|\leq \delta\}$ is an invariant set for all $\Theta_\lambda$, such that $\|\lambda-\lambda_0\|_{\cP}\leq \frac{\delta}{2\Upsilon}$.
In addition, setting $\delta_1:=\min\left\{\delta, \frac{\delta}{2\Upsilon}\right\}$, for all $\lambda\in \{\lambda\;:\;\|\lambda-\lambda_0\|_\cP\leq\delta_1\}=:\Delta$,
\[
\|\Theta_\lambda(x)-\Theta_\lambda(y)\|\leq \frac 12\|x-y\|.
\]
The existence and uniqueness of $x\in K_\delta$ such that $\Theta_\lambda(x)=x$ follows then by the standard contraction fixed point Theorem. We have obtained a function $g:=\Delta\to\cB$ such that $F(g(\lambda),\lambda)=0$.

It remains the question of regularity. Let $\lambda,\lambda'\in \Delta$. By \eqref{eq:Newton-imp}
\[
\begin{split}
\|g(\lambda)-g(\lambda')\|&\leq \|\Theta_\lambda(g(\lambda))-\Theta_\lambda(g(\lambda'))\|+\|\Theta_\lambda(g(\lambda'))-\Theta_{\lambda'}(g(\lambda'))\|\\
&\leq \frac 12\|g(\lambda)-g(\lambda')\|+\Upsilon|\lambda-\lambda'|.
\end{split}
\]
This yields the Lipschitz continuity of the function $g$ with Lipschitz constant $2\Upsilon$. To obtain the differentiability we note that, by the differentiability of $F$ and the above Lipschitz continuity of $g$, for $\ve\in\cP$ small enough, we have
\[
\begin{split}
\|F(g(\lambda+\ve), \lambda+\ve)-F(g(\lambda),\lambda)+\partial_xF&[g(\lambda+\ve)-g(\lambda)]+\partial_\lambda F \ve\|=o(\|\ve\|_\cP)\\
&+o(\|g(\lambda+\ve)-g(\lambda)\|)=o(\|\ve\|_\cP).
\end{split}
\]
Since $F(g(\lambda+h), \lambda+h)=F(g(\lambda), \lambda)=0$, it follows
\[
\lim_{\ve\to 0}\|\ve\|_\cP^{-1}\|g(\lambda+\ve)-g(\lambda)+[\partial_xF]^{-1}\partial_\lambda F \ve\|=0
\]
which concludes the proof of the Theorem.
\end{proof}

\end{document}